\newtheorem{theorem}{Theorem}
\newtheorem{assumption}{Assumption}
\newtheorem{proposition}{Proposition}
\newtheorem{corollary}{Corollary}
\newtheorem{lemma}{Lemma}
\newcommand{\abs}[1]{\left\lvert#1\right\rvert}
\newcommand{\norm}[1]{\left\lVert#1\right\rVert}
\newcommand{\rbr}[1]{\left(#1\right)}
\newcommand{\sbr}[1]{\left[#1\right]}
\newcommand{\cbr}[1]{\left\{#1\right\}}
\newcommand{\R}{\mathbb{R}}
\newcommand{\N}{\mathbb{N}}
\newcommand{\mF}{\mathcal{F}}
\newcommand{\mY}{\mathcal{Y}}
\newcommand{\mR}{\mathcal{R}}
\renewcommand{\Pr}{\mathbb{P}}
\renewcommand{\widetilde}{\widetilde}
\newcommand{\argmin}{\operatornamewithlimits{argmin}}
\def\bzero{\bm{0}}
\def\bb{\bm{b}}
\def\bv{\bm{v}}
\def\bx{\bm{x}}
\def\by{\bm{y}}
\def\bA{\bm{A}}
\def\bB{\bm{B}}
\def\bD{\bm{D}}
\def\bI{\bm{I}}
\def\bL{\bm{L}}
\def\bP{\bm{P}}
\def\bT{\bm{T}}
\def\bU{\bm{U}}
\def\bV{\bm{V}}
\def\bW{\bm{W}}
\def\bX{\bm{X}}
\def\bZ{\bm{Z}}
\def\btheta{\bm{\theta}}
\def\bbeta{\bm{\beta}}
\def\bmeta{\bm{\eta}}
\def\bSigma{\bm{\Sigma}}
\def\bTheta{\bm{\Theta}}
\newcommand{\trace}{\mathrm{tr}}
\newcommand{\prox}{\mathrm{prox}}
\newcommand{\diag}{\mathrm{diag}}
\newcommand{\as}{\overset{\mathrm{a.s.}}{\longrightarrow}}
\newcommand{\pconv}{\overset{\mathrm{p}}{\to}}
\newcommand{\dconv}{\overset{\mathrm{d}}{\to}}
\DeclareMathOperator\E{\mathbb{E}}
\DeclareMathOperator{\Var}{Var}
\title{High-dimensional single-index models: \\link estimation and marginal inference
}
\author{Kazuma Sawaya$^1$, Yoshimasa Uematsu$^2$, Masaaki Imaizumi$^{1,3}$}
\thanks{KS is supported by Grant-in-Aid for JSPS Fellows (24KJ0841).
YU is supported by JSPS KAKENHI (19K13665).
MI was supported by JSPS KAKENHI (21K11780), JST CREST (JPMJCR21D2), and JST FOREST (JPMJFR216I)}
\address{$^1$The University of Tokyo, Bunkyo, Tokyo, Japan.\\
$^2$Hitotsubashi University, Kunitachi, Tokyo, Japan.\\
$^3$RIKEN Center for Advanced Intelligence Project, Chuo, Tokyo, Japan}
\date{\today}
\begin{document}
\maketitle

\begin{abstract}
    This study proposes a novel method for estimation and hypothesis testing in high-dimensional single-index models. We address a common scenario where the sample size and the dimension of regression coefficients are large and comparable. Unlike traditional approaches, which often overlook the estimation of the unknown link function, we introduce a new method for link function estimation. Leveraging the information from the estimated link function, we propose more efficient estimators that are better aligned with the underlying model. Furthermore, we rigorously establish the asymptotic normality of each coordinate of the estimator. This provides a valid construction of confidence intervals and $p$-values for any finite collection of coordinates. Numerical experiments validate our theoretical results.
\end{abstract}

\section{Introduction}

We consider $n$ i.i.d.\ observations $\{(\bX_i,y_i)\}_{i=1}^n$ with a $p$-dimensional Gaussian feature vector $\bX_i\sim\mathcal{N}_p(\bzero,\bSigma), \bSigma\in\R^{p\times p}$, and each scalar response $y_i$ belongs to a set $\mathcal{Y}$ (e.g., $\R,\R+,{0,1},\N\cup{0}$), following the single-index model:
\begin{align}
\label{eq:model}
    \E[y_i\mid\bX_i=\bx]=g(\bbeta^\top\bx),
\end{align}
where $\bbeta = (\beta_1,\dots,\beta_p)^\top \in\R^p$ is an unknown deterministic coefficient vector, and $g(\cdot)$ is an unknown deterministic function, referred to as the link function, with $\bbeta^\top\bx$ being the index.
To identify the scale of $\bbeta$, we assume $\mathrm{Var}(\bbeta^\top\bX_i)=\bbeta^\top\bSigma\bbeta=1$. 
The model includes common scenarios such as:
\begin{itemize}
\item \textit{Linear regression}: $y_i\mid\bX_i\sim\mathcal{N}(\bbeta^\top\bX_i,\sigma_\varepsilon^2)$ 
with $\sigma_\varepsilon>0$ by setting $g(t)=t$. 
\item \textit{Poisson regression}: $y_i\mid\bX_i\sim\mathrm{Pois}(\exp(\bbeta^\top\bX_i))$ by setting $g(t)=\exp(t)$. 
\item \textit{Binary choice models}: $y_i\mid\bX_i\sim\mathrm{Bern}(g(\bbeta^\top\bX_i))$ with $g:\R\to[0,1]$. This includes logistic regression for $g(t)=1/(1+\exp(-t))$ and the probit model by setting $g(\cdot)$ as the cumulative distribution function of the standard Gaussian distribution.
\end{itemize}
We are interested in a high-dimensional setting, where both the sample size $n$ and the coefficient dimension $p := p(n)$ are large and comparable. 
Specifically, this study examines the proportionally high-dimensional regime defined by:
\begin{align}
n, p(n) \to \infty
\quad\text{and}\quad
p(n)/n =: \kappa \to \bar{\kappa}, \label{eq:high-dim}
\end{align}
where $\bar{\kappa}$ is a positive constant.

The single-index model \eqref{eq:model} possesses several practically important properties. First, it mitigates concerns about model misspecification, as it eliminates the need to specify $g(\cdot)$. Second, this model bypasses the curse of dimensionality associated with function estimation since the input index $\bbeta^\top\bX_i$ is a scalar. This advantage is particularly notable in comparison with nonparametric regression models, such as $y_i = \check{g}(\bX_i) + \varepsilon_i$, where $\check{g}:\R^p\to\R$ remains unspecified. Third, the model facilitates the analysis of the contribution of each covariate, $X_{ij}$ for $j=1,\ldots,p$, to the response $y_i$ by testing $\beta_j = 0$ against $\beta_j \neq 0$. 
Owing to these advantages, single-index models have been actively researched for decades \citep{powell1989semiparametric,hardle1989investigating,li1989regression,ichimura1993semiparametric,klein1993efficient,hristache2001direct,nishiyama2005bootstrap,dalalyan2008new,alquier2013sparse,eftekhari2021inference,bietti2022learning,fan2023understanding}, attracting interest across a broad spectrum of fields, particularly in econometrics \citep{horowitz2009semiparametric,li2023nonparametric}.

In the proportionally high-dimensional regime as defined in \eqref{eq:high-dim}, the single-index model and its variants have been extensively studied. For logistic regression, which is a particular instance of the single-index model, \citet{sur2019likelihood,salehi2019impact} have investigated the estimation and classification errors of the regression coefficient estimators $\bbeta$. Furthermore, \citet{sur2019modern,zhao2022asymptotic,yadlowsky2021sloe} have developed methods for asymptotically valid statistical inference. In the case of generalized linear models with a known link function $g(\cdot)$, \citet{rangan2011generalized,barbier2019optimal} have characterized the asymptotic behavior of the coefficient estimator, while \cite{sawaya2023statistical} have derived the coordinate-wise marginal asymptotic normality of an adjusted estimator of $\beta_j$. For the single-index model with an unknown link function $g(\cdot)$, the seminal work by \citet{bellec2022observable} establishes the (non-marginal) asymptotic normality of estimators, even when there is link misspecification. However, the construction of an estimator for the link function $g(\cdot)$ and the marginal asymptotic normality of the coefficient estimator are issues that have not yet been fully resolved.

Inspired by these seminal works, the following questions naturally arise:
\textit{
\begin{enumerate}[label=\rm{(\roman*)}]
\item Can we consistently estimate the unknown link function $g(\cdot)$?
\item Can we rigorously establish marginal statistical inference for each coordinate of $\bbeta$?
\item Can we improve the estimation efficiency by utilizing the estimated link function?
\end{enumerate}}
\noindent
This paper aims to provide affirmative answers to these questions. Specifically, we propose a novel estimation methodology comprising three steps. First, we construct an estimator for the index $\bbeta^\top\bX_i$, based on the distributional characteristics of a pilot estimator for $\bbeta$. 
Second, we develop an estimator for the link function $g(\cdot)$ by utilizing the estimated index in a nonparametric regression problem that involves errors-in-variables. 
Third, we design a new convex loss function that leverages the estimated link function to estimate $\bbeta$. To conduct statistical inference, we investigate the estimation problem of inferential parameters necessary for establishing coordinate-wise asymptotic normality in high-dimensional settings.

Our contributions are summarized as follows:
\begin{itemize}
\item[-] \textit{Link estimation}: We propose a consistent estimator for the link function $g(\cdot)$, which is of practical significance as well as estimating coefficients. This aids in interpreting the model via the link function and mitigates negative impacts on coefficient estimation due to link misspecification.
\item[-] \textit{Marginal inference}: We establish the asymptotic normality for any finite subset of the coordinates of our estimator, facilitating coordinate-wise inference of $\bbeta$. This approach allows us not only to test each variable's contribution to the response but also to conduct variable selection based on importance statistics for each feature.
\item[-] \textit{Efficiency improvement}:
By utilizing the consistently estimated link function, we anticipate that our estimator of $\bbeta$ will be more efficient than previous estimators that rely on potentially misspecified link functions. We predominantly validate this efficiency through numerical simulations.
\end{itemize}

From a technical perspective,
we leverage the proof strategy in \citet{zhao2022asymptotic}
to demonstrate the marginal asymptotic normality of our estimator for $\bbeta$.
Specifically, we extend the arguments to a broader class of unregularized M-estimators, whereas \citet{zhao2022asymptotic} originally considered the maximum likelihood estimator (MLE) for logistic regression.

\subsection{Marginal Inference in High Dimensions} \label{sec:marginal_inference_highdim}
We review key technical aspects of statistical inference for each coordinate $\beta_j$ in the proportionally high-dimensional regime \eqref{eq:high-dim}. We maintain $\bbeta^\top\bX_i$ of constant order by considering the setting $\bbeta^\top\bSigma\bbeta=\Theta(1)$. We define $\bTheta=\bSigma^{-1}$ as the precision matrix for the distribution of $\bX_i$ and set $\tau_j^2:=\Theta_{jj}^{-1}>0$. An unbiased estimator of $\tau_j$ can be constructed using nodewise regression (cf. Section 5.1 in \citet{zhao2022asymptotic}). For simplicity, we assume $\tau_j$ is known, following prior studies.

In the high-dimensional regime \eqref{eq:high-dim}, statistical inference must address two components: the asymptotic distribution and the \textit{inferential parameters} of an estimator. We review the asymptotic distribution of the MLE $\hat\bbeta^\mathrm{m}$ for logistic regression. According to \citet{zhao2022asymptotic}, for all $j\in\{1,\ldots,p\}$ such that $\sqrt{p}\tau_j\beta_j=O(1)$ as $n\to\infty$, the estimator achieves the following asymptotic normality:
\begin{align}
\label{eq:asyN}
    \frac{\sqrt{p}(\hat\beta_j^\mathrm{m}-\mu_*\beta_j)}{\sigma_*/\tau_j}\dconv\mathcal{N}(0,1).
\end{align}
Here, we define $\mu_* \in\R$ and $\sigma_* >0$ as the asymptotic bias and variance, respectively, ensuring the convergence \eqref{eq:asyN}. It is crucial to note that both the estimator $\hat{\beta}_j^{\mathrm{m}}$ and the target $\beta_j$ scale as $O_p(1/\sqrt{p})$ here.

To perform statistical inference based on \eqref{eq:asyN}, it is necessary to estimate the inferential parameters $\mu_*$ and $\sigma_*$. Several studies including \citet{el2013robust,thrampoulidis2018precise,sur2019modern,loureiro2021learning} theoretically characterize these parameters as solutions to a system of nonlinear equations that depend on the data-generating process and the loss function. Additionally, various approaches have been developed to practically solve the equations by determining their hyperparameter $\bbeta^\top\bSigma\bbeta$ under different conditions. Specifically, \citet{sur2019modern} introduces \textit{ProbeFrontier} for estimating $\bbeta^\top\bSigma\bbeta$ based on the asymptotic existence/non-existence boundary of the maximum likelihood estimator (MLE) in logistic regression. \textit{SLOE}, proposed by \citet{yadlowsky2021sloe}, enhances this estimation using a leave-one-out technique. Moreover, \citet{sawaya2023statistical} takes a different approach to estimate $\bbeta^\top\bSigma\bbeta$ for generalized linear models.

For single-index models, \citet{bellec2022observable} introduces \textit{observable adjustments} that estimate the inferential parameters directly under the identification condition $\bbeta^\top\bSigma\bbeta = 1$ irrespective of link misspecification, bypassing the system of equations. In our study, we develop an estimator for the single-index model satisfying the asymptotic normality \eqref{eq:asyN}, with corresponding estimators for the inferential parameters using observable adjustments.

\subsection{Related Works}
\label{sec:literature}
Research into the asymptotic behavior of statistical models in high-dimensional settings, where both $n$ and $p$ diverge proportionally, has gained momentum in recent years. Notable areas of exploration include (regularized) linear regression models \citep{donoho2009message,bayati2011lasso,krzakala2012probabilistic,bayati2013estimating,thrampoulidis2018precise,mousavi2018consistent,takahashi2018statistical,miolane2021distribution,guo2022moderate,hastie2022surprises,li2023spectrum}, robust estimation \citep{el2013robust,donoho2016high}, generalized linear models \citep{rangan2011generalized,sur2019likelihood,sur2019modern,salehi2019impact,barbier2019optimal,zhao2022asymptotic,tan2023multinomial,sawaya2023statistical}, low-rank matrix estimation \citep{deshpande2017asymptotic,macris2020all,montanari2021estimation}, and various other models \citep{montanari2019generalization,loureiro2021learning,yang2021tensor,mei2022generalization}. These investigations focus primarily on the convergence limits of estimation and prediction errors. Theoretical analyses have shown that classical statistical estimation often fails to accurately estimate standard errors and may lack key desirable properties such as asymptotic unbiasedness and asymptotic normality.

In such analyses, the following theoretical tools have been employed: (i) the replica method \citep{mezard1987spin,charbonneau2023spin}, (ii) approximate message passing algorithms \citep{donoho2009message,bolthausen2014iterative,bayati2011dynamics,feng2022unifying}, (iii) the leave-one-out technique \citep{el2013robust,el2018impact}, (iv) the convex Gaussian min-max theorem \citep{thrampoulidis2018precise}, (v) second-order Poincaré inequalities \citep{chatterjee2009fluctuations,lei2018asymptotics}, and (vi) second-order Stein's formulae \citep{bellec2021second,bellec2022derivatives}. Although these tools were initially proposed for analyzing linear models with Gaussian design, they have been extensively adapted to a diverse range of models. In this study, we apply observable adjustments based on second-order Stein's formulae \citep{bellec2022observable} to directly estimate the asymptotic bias and variance of coefficient estimators. Furthermore, we provide a comprehensive proof of marginal asymptotic normality, extending the work of \citet{zhao2022asymptotic} to a wider array of estimators.

\subsection{Notation}
Define $[z]=\{1,\ldots,z\}$ for $z \in \N$. 
For a vector $\bb = (b_1,\dots,b_p)\in \R^p$, we write $\|\bb\|:=(\sum_{j=1}^p b_j^2)^{1/2}$.
For a collection of indices $\mathcal{S}\subset[p]$, we define a sub-vector $\bb_\mathcal{S} := (b_j)_{j \in \mathcal{S}}$ as a slice of $\bbeta$. 
For a matrix $\bA\in\R^{p\times p}$, we define its minimum and maximum eigenvalues by $\lambda_{\rm min}(\bA)$ and $\lambda_{\rm max}(\bA)$, respectively. 
For a function $F: \R \to \R$, we say $F'$ the derivative of $F$ and $F^{(m)}$ the $m$th-order derivative.
For a function $f: \R \to \R$ and a vector $\bb \in \R^p$, $f(\bb) = (f(b_1),f(b_2),\dots,f(b_p))^\top \in \R^p$ denotes a vector by elementwise operations.

\subsection{Organization}
We organize the remainder of the paper as follows: Section \ref{sec:method} presents our estimation procedure. Section \ref{sec:main_results} describes the asymptotic properties of the proposed estimator and develops a statistical inference method. Section \ref{sec:experiments} provides several experiments to validate our estimation theory. Section \ref{sec:proof_outline} outlines the proofs of our theoretical results. Section \ref{sec:pilot} discusses alternative designs for estimators. Finally, Section \ref{sec:discussion} concludes with a discussion of our findings. The Appendix contains additional discussions and the complete proofs.

\section{Statistical Estimation Procedure}
\label{sec:method}
In this section, we introduce a novel statistical estimation method for single-index models as defined in \eqref{eq:model}. Our estimator $\hat{\bbeta}$ is constructed through the following steps:
\begin{enumerate}
\item[(i)] Construct an index estimator $W_i$ for $\bbeta^\top \bX_i$ using the ridge regression estimator $\tilde{\bbeta}$, referred to as a pilot estimator. This estimator is reasonable regardless of the misspecification of the link function.
\item[(ii)] Develop a function estimator $\hat{g}(\cdot)$ for the link function $g(\cdot)$, based on the distributional characteristics of the index estimator $W_i$.
\item[(iii)] Construct our estimator $\hat{\bbeta}$ for the coefficients $\bbeta$, using the estimated link $\hat{g}(\cdot)$ function.
\end{enumerate}
Furthermore, statistical inference additionally involves a fourth step:
\begin{enumerate}
\item[(iv)] Estimate the inferential parameters $\mu_*$ and $\sigma_*$, conditional on the estimated link function $\hat{g}(\cdot)$.
\end{enumerate}

In our estimation procedure, we divide the dataset ${(\bX_i,y_i)}_{i=1}^n$ into two disjoint subsets ${(\bX_i,y_i)}_{i \in I_1}$ and ${(\bX_i,y_i)}_{i \in I_2}$, where $I_1, I_2 \subset [n]$ are index sets such that $I_1 \cap I_2 = \emptyset$ and $I_1 \cup I_2 = [n]$. Additionally, for $k=1,2$, let $\bX^{(k)}\in\R^{n_k\times p}$ and $\by^{(k)}\in\R^{n_k}$ denote the design matrix and response vector of subset $I_k$, respectively. We utilize the first subset to estimate the link function (Steps (i) and (ii)), and the second subset to estimate the regression coefficients (Step (iii)) and inference parameters (Step (iv)). From a theoretical perspective, this division helps to manage the complicated dependency structure caused by data reuse. Nonetheless, for practical applications, we recommend employing all observations in each step to maximize the utilization of the data's inherent signal strength. Here, $n_1$ and $n_2$ are the sample sizes for each partition, satisfying $n=n_1+n_2$. We define $\kappa_1=p/n_1$ and $\kappa_2=p/n_2$.

\subsection{Index Estimation} \label{sec:index}
In this step, we use the first subset $(\bX^{(1)}, \by^{(1)})$. We define the pilot estimator as the ridge estimator,
$\Tilde{\bbeta}=(({\bX^{(1)}})^\top\bX^{(1)}+n_1\lambda\bI_p)^{-1}({\bX^{(1)}})^\top\by^{(1)}$ where $\lambda > 0$ is the regularization parameter. Further, we consider inferential parameters $(\mu_1, \sigma_1)$ of $\tilde{\bbeta}$, which satisfy
$\sqrt{p}\tau_j (\Tilde{\beta}_j - \mu_1 \beta_j)/ \sigma_1 \dconv \mathcal{N}(0,1)$ for $j\in[p]$ such that $\sqrt{p}\tau_j\beta_j=O(1)$.
Using these parameters, we develop an estimator $W_i$ for the index $\bbeta^\top{\bX_i^{(1)}}$ as follows:
\begin{align}
    W_i:=\tilde\mu^{-1}\tilde\bbeta^\top{\bX_i^{(1)}}-\tilde\mu^{-1}\tilde\gamma\rbr{y_i^{(1)}-\Tilde{\bbeta}^\top{\bX_i^{(1)}}}
    \label{def:W}
\end{align}
for each $i \in [n_1]$. Here, $\tilde{\mu}$ and $\Tilde{\sigma}^2$ are estimators of $\mu_1$ and $\sigma_1$, defined as
\begin{align}
        \tilde\mu=\abs{\|\tilde\bbeta\|^2-\tilde\sigma^2}^{1/2}  \quad\mbox{and}\quad
        \tilde\sigma^2=\frac{n_1^{-1}\|\by^{(1)}-\bX^{(1)}\tilde\bbeta\|^2}{(\tilde{v}+\lambda)^2/\kappa_1},
    \end{align}
where $\tilde{\gamma} := \kappa_1 / (\Tilde{v} + \lambda)$ and $\tilde{v}=n_1^{-1}\trace(\bI_n-\bX^{(1)}({(\bX^{(1)}})^\top\bX^{(1)}+n_1\lambda\bI_p)^{-1}{(\bX^{(1)}})^\top)$.
These estimators are obtained by the observable adjustment technique described in \citet{bellec2022observable}.

This index estimator $W_i$ is approximately unbiased for the index $\bbeta^\top{\bX_i^{(1)}}$, yielding the following asymptotic result:
\begin{align}
    W_i \approx \bbeta^\top{\bX_i^{(1)}}+\mathcal{N}(0,\tilde\sigma^2/\tilde\mu^2). \label{eq:W}
\end{align}
We will provide its rigorous statement in Proposition \ref{prop:initial} in Section \ref{sec:proof-link}.

There are other options for the pilot estimator besides the ridge estimator $\Tilde{\bbeta}$. 
If $\kappa_1 \leq 1$ holds, the least squares estimator can be an alternative. 
If $y_i$ is a binary or non-negative integer, the MLE of logistic or Poisson regression can be a natural candidate, respectively, although the ridge estimator $\Tilde{\bbeta}$ is valid regardless of the form that $y_i$ takes. 
In each case, the estimated inferential parameters $(\Tilde{\gamma}, \tilde{\mu}, \tilde{\sigma}^2)$ should be updated accordingly.
Details are presented in Section \ref{sec:pilot}.

\subsection{Link Estimation} \label{sec:g}

We develop an estimator of the link function $g(\cdot)$ using $W_i$ in \eqref{def:W}.
If we could observe the true index $\bbeta^\top{\bX_i^{(1)}}$ with the unknown coefficient $\bbeta$, it would be possible to estimate $g(x) = \E[y_1 \mid \bbeta^\top \bX_1 = x]$ by applying standard nonparametric methods to the pairs of responses and true indices $(\by^{(1)}, \bX^{(1)}\bbeta)$. However, as the true index is unobservable, we must estimate $g(\cdot)$ using given pairs of responses and contaminated indices ${(y_i^{(1)}, W_i)}_{i=1}^{n_1}$, where $W_i \approx \bbeta^\top \bX_i^{(1)} + \mathcal{N}(0, \tilde\sigma^2/\tilde\mu^2)$. The type of error $\mathcal{N}(0, \tilde\sigma^2/\tilde\mu^2)$ involving the regressor leads to an attenuation bias in the estimation of $g(\cdot)$, known as the \textit{errors-in-variables} problem. To address this issue, we utilize a deconvolution technique \citep{stefanski1990deconvolving} to remove the bias stemming from error-in-variables asymptotically. Further details of the deconvolution are provided in Section \ref{sec:deconv}.

We define an estimator of $g(\cdot)$.
In preparation, we specify a kernel function  $K:\R\rightarrow\R$, and define a deconvolution kernel $K_n:\R\to\R$  as follows:
\begin{align}
K_n(x)=\frac{1}{2\pi}\int_{-\infty}^\infty\exp(-\mathrm{i}tx)\frac{\phi_K(t)}{\phi_{\tilde\varsigma}(t/h_n)}dt,
\end{align} 
where $h_n > 0$ is a bandwidth, $\mathrm{i}=\sqrt{-1}$ is an imaginary unit, $\tilde\varsigma^2=\tilde\sigma^2/\tilde\mu^2$ is a ratio of the inferential parameters, and $\phi_K:\R\rightarrow\R$ and $\phi_{\tilde\varsigma}:\R\to\R$ are the Fourier transform of $K(\cdot)$ and the density function of $\mathcal{N}(0,\tilde\varsigma^2)$, respectively. 
We then define our estimator of $g(\cdot)$ as 
\begin{align}
\label{eq:deconv}
\hat{g}(x):=\mathcal{R}[\breve{g}](x)\quad\mbox{with}\quad \breve{g}(x)=\frac{\sum_{i=1}^{n_1}y_i^{(1)} K_n\rbr{(x-W_i)/h_n}}{\sum_{i=1}^{n_1} K_n\rbr{(x-W_i)/h_n}},
\end{align} 
where $\mR[\cdot]$ is a monotonization operator, specified later, which maps any measurable function to a monotonic function, and $\breve{g}(\cdot)$ is a Nadaraya-Watson estimator obtained by the deconvolution kernel. We will prove the consistency of this estimator in Theorem \ref{thm:link-est} in Section \ref{sec:main_results}.

The monotonization operation $\mR[\cdot]$ on $\breve{g}(\cdot)$ is justifiable because the true link function $g(\cdot)$ is assumed to be monotonic.
One simple choice for $\mR[\cdot]$, applicable to any measurable function $f:\R\to\R$, is
\begin{align}
    \mR_\mathrm{naive}[f](x) = \sup_{x' \leq x} f(x'),\quad  x \in \R.
\end{align}
This definition holds for all $x \in \R$.
Another effective alternative is the rearrangement operator \citep{chernozhukov2009improving}. This operator monotonizes a measurable function $f:\R\to\R$ within a compact interval $ [\underline{x},\overline{x}] \subset \R$:
\begin{align}
    \mathcal{R}_\mathrm{r}[f](x)=\inf\cbr{t\in\R:\int_{[0,1]} 1\left\{f \left(\frac{u-\underline{x}}{\overline{x} - \underline{x}}\right)\le t \right\}du\ge \frac{x-\underline{x}}{\overline{x} - \underline{x}}},~ x \in  [\underline{x},\overline{x}]. \label{def:rearrangement}
\end{align}
This operator, which sorts the values of $f(\cdot)$ in increasing order, is robust against local fluctuations such as function bumps. Thus, it effectively addresses bumps in $\breve{g}(\cdot)$ arising from kernel-based methods.

\subsection{Coefficient Estimation}
We next propose our estimator of $\bbeta$ using $\hat{g}(\cdot)$ obtained in \eqref{eq:deconv}. In this step, we consider the link estimator $\hat{g}(\cdot)$ from $\bX^{(1)}$ as given, and estimate $\bbeta$ using $\bX^{(2)}$. To facilitate this, we introduce the \textit{surrogate loss function} for $\bb \in \R^p$, with $ \bx \in \R^p$, $ y \in \R$, and any measurable function $\bar g:\R\to\R$:
\begin{align}
\label{eq:loss}
    \ell(\bb;\bx,y,\bar{g}):=\bar{G}(\bx^\top \bb)-y\bx^\top \bb,
\end{align}
where $\bar{G}:\R\to\R$ is a function such that $\bar{G}'(t)=\bar{g}(t)$. This function can be viewed as a natural extension of the matching loss \citep{auer1995exponentially} used in generalized linear models.
If $\bar{g}(\cdot)$ is strictly increasing, then the loss is strictly convex in $\bb$. Moreover, the surrogate loss is justified by the characteristics of the true parameter as follows \citep{agarwal2014least}:
\begin{align}
    \bbeta=\argmin_{\bb\in\R^p}\E\sbr{\ell(\bb;{\bX_1},{y_1},{g})|\bX_1},
\end{align}
provided that $G(\cdot)$ is integrable.
The surrogate loss aligns with the negative log-likelihood when $g(\cdot)$ is known and serves as a canonical link function, thereby making the surrogate loss minimizer a generalization of the MLEs in generalized linear models.

Using the second dataset $\bX^{(2)}$ with any given function $\bar{g}(\cdot)$, we define our estimator of $\bbeta$ as
\begin{align}
\label{eq:estimator}
    \hat{\bbeta}(\bar{g})=\argmin_{\bb\in\R^p}\sum_{i=1}^{n_2}\ell(\bb;\bX_i^{(2)},y_i^{(2)},\bar{g}) +J(\bb),
\end{align}
where $J:\R^p\to\R$ is a convex regularization function.
Finally, we substitute the link estimator $\hat{g}(\cdot)$ into \eqref{eq:estimator} to obtain our estimator $\hat\bbeta(\hat{g})$.
The use of a nonzero regularization term, $J(\cdot)$, is beneficial in cases where the minimizer \eqref{eq:estimator} is not unique or does not exist; see, for example, \cite{candes2020phase} for the logistic regression case.

\subsection{Inferential Parameter Estimation} \label{sec:inferential_estimator}
We finally study estimators for the inferential parameters of our estimator $\hat{\bbeta}(\hat{g})$, which are essential for statistical inference as discussed in Section \ref{sec:marginal_inference_highdim}. As established in \eqref{eq:asyN}, it is necessary to estimate the asymptotic bias $\mu_*(\hat{g})$ and variance $\sigma_*^2(\hat{g})$ that satisfy the following relationship:
\begin{align}
 \frac{\sqrt{p}(\hat\beta_j(\hat{g})-\mu_*(\hat{g})\beta_j)}{\sigma_*(\hat{g})}\dconv\mathcal{N}(0,1),\quad j\in[p],
\end{align}
conditional on $(\bX^{(1)},\by^{(1)})$ and consequently on $\hat{g}(\cdot)$.

We develop estimators for these inferential parameters using observable adjustments as suggested by \citet{bellec2022observable}, in accordance with the estimator \eqref{eq:estimator}.
For any measurable function $\bar g:\R\to\R$, we define $\bD=\diag(\bar{g}'(\bX^{(2)}\hat{\bbeta}(\bar{g})))$ and  $\hat{v}_\lambda =n_2^{-1}\trace(\bD-\bD\bX^{(2)}(({\bX^{(2)}})^\top\bD\bX^{(2)}+n_2\lambda\bI_p)^{-1}({\bX^{(2)}})^\top\bD)$ for $\lambda \geq 0$.
When incorporating $J(\bb)=\lambda\|\bb\|^2/2$ into \eqref{eq:estimator} with $\lambda>0$, we propose the following estimators:
\begin{align}
    \hat\mu(\bar{g})&=\abs{\|\hat\bbeta(\bar{g})\|^2-\hat\sigma^2(\bar{g})}^{1/2} \mbox{~and~}\hat\sigma^2(\bar{g})=  \frac{\|\by^{(2)}-\bar{g}(\bX^{(2)}\hat\bbeta(\bar{g}))\|^2}{n_2 (\hat{v}_\lambda+\lambda)^2/\kappa_2}.
\end{align}
In the case where $J(\cdot) \equiv \bzero$ holds, we define
\begin{align}
    \hat{\mu}_0(\bar{g})=\abs{{\|\bX^{(2)}\hat\bbeta(\bar{g})\|^2}/n_2-(1-\kappa_2)\hat{\sigma}_0^2(\bar{g})}^{1/2} \mbox{~~and~~}
    \hat{\sigma}_0^2(\bar{g})=  \frac{\|\by^{(2)}-\bar{g}(\bX^{(2)}\hat\bbeta(\bar{g}))\|^2}{n_2 \hat{v}_0^2/\kappa_2}.\label{eq:sigma-hat}
\end{align}
A theoretical justification for the asymptotic normality with these estimators and their application in inference is provided in Section \ref{sec:inference}.

\section{Main Theoretical Results of Proposed Estimators}\label{sec:main_results}

This section presents theoretical results for our estimation framework. Specifically, we prove the consistency of the estimator $\hat{g}(\cdot)$ for the link function $g(\cdot)$, and the asymptotic normality of the estimator $\hat{\bbeta}(\hat{g})$ for the coefficient vector $\bbeta$. Outlines of the proofs for each assertion will be provided in Section \ref{sec:proof_outline}.

\subsection{Assumptions}
As a preparation, we present some assumptions.
\begin{assumption}[Data-splitting in high dimensions]
    \label{asmp:high-dimension}
    There exist constants $c_1,c_2>0$ with $c_1\le c_2$ independent of $n$ such that $\kappa_1,\kappa_2\in[c_1,c_2]$ holds. 
\end{assumption}

Assumption \ref{asmp:high-dimension} requires that the split subsamples, as described at the beginning of Section \ref{sec:method}, have the same order of sample size.

\begin{assumption}[Gaussian covariates and identification]
\label{asmp:feature}
    Each row of the matrix $\bX$ independently follows $\mathcal{N}_p(\bzero,\bSigma)$ with $\bSigma$ obeying $\bbeta^\top\bSigma\bbeta=1$ and $0<c_\Sigma^{-1}\le\lambda_{\mathrm{min}}(\bSigma)\le\lambda_{\mathrm{max}}(\bSigma)\le c_\Sigma<\infty$ with a constant $c_\Sigma$.
\end{assumption}
It is common to assume Gaussianity of covariates in the proportionally high-dimensional regime, as mentioned in Section \ref{sec:literature}. The condition $\bbeta^\top\bSigma\bbeta=1$ is necessary to identify the scale of $\bbeta$, which ensures the uniqueness of the estimator in the single-index model with an unknown link function $g(\cdot)$. For example, without this condition, it would be impossible to distinguish between $g(\bX_1^\top\bb)$ and $f(2\bX_1^\top\bb)$, where $f(t) = g(t/2)$, for any $\bb \in \R^p$. Furthermore, the assumption of upper and lower bounds on the eigenvalues of $\bSigma$ implies that $\|\bbeta\| = \Theta(1)$.

\begin{assumption}[Monotone and smooth link function]
    \label{asmp:link}
    There exists a constant $c_g>0$ such that $0<c_g^{-1}\le g'(x)$ holds for any $x\in\R$.
    Also, there exist constants $B > 0$ and $a,b \in \R$ such that $a<b$, $g^{(\ell)}(x)$ exists for $x \in (a,b)$, and $\sup_{a\le x\le b}|g^{(\ell)}(x)|\le B$ holds for every $\ell =0,1\ldots,m$ for some $m\in\mathbb{N}$.
\end{assumption}

Assumption \ref{asmp:link} restricts the class of link functions to those that are monotonic. This class has been extensively reviewed in the literature, with \citet{balabdaoui2019least} providing a comprehensive discussion. It encompasses a wide range of applications, including utility functions, growth curves, and dose-response models \citep{matzkin1991semiparametric,wan2017monotonic,foster2013variable}. Furthermore, under a monotonically increasing link function, the sign of $\bbeta$ is identified, so that we can identify $\bbeta$ only by the scale condition $\bbeta^\top\bSigma\bbeta=1$.

\begin{assumption}[Moment conditions of $\by$]
    \label{asmp:y}
    $\E[y_1^2]<\infty$ holds.
    Further, $m_2(x):=\E[y_1^2\mid \bX_1^\top\bbeta=x]$ is continuous in $x  \in [a,b]$ for $a,b \in \R$ defined in Assumption \ref{asmp:link}.
\end{assumption}

The continuity of $m_2(x)$ is maintained in many commonly used models, particularly when $g(\cdot)$ is continuous. For instance, the Poisson regression model defines $m_2(x) = \exp(x)(1 + \exp(x))$, and binary choice models specify $m_2(x) = g(x)$.

\subsection{Consistency of Link Estimation}
We demonstrate the uniform consistency of the link estimator $\hat{g}(\cdot)$ in \eqref{eq:deconv} over closed intervals. We consider the $m\mathrm{th}$-order kernel $K(\cdot)$ that satisfies
    \begin{align}
        \int_{-\infty}^\infty K(t)dt=1,~~~ \int_{-\infty}^\infty t^mK(t)dt\neq0,~\mbox{and}~
        \int_{-\infty}^\infty t^{\ell}K(t)dt=0,
    \end{align}
for $\ell\in[m-1]$.
We then obtain the following:

\begin{theorem}
\label{thm:link-est}
Suppose that  Assumptions \ref{asmp:high-dimension}--\ref{asmp:y} hold, the Fourier transform $\phi_K(t)$ of the kernel $K(\cdot)$ has a bounded support in $[-M_0, M_0]$ with some $M_0 > 0$, and the bandwidth $h_n=(c_h\log n_1)^{-1/2}$ satisfies $2M_0^2(\sigma_1 / \mu_1)^2c_h<1$.
Then, we have the following as $n_1\to\infty$:
\begin{align}
    \sup_{a\le x\le b}|\hat{g}(x)-g(x)|=O_\mathrm{p}\rbr{\frac{1}{(\log n_1)^{m/2}}}.
\end{align}
\end{theorem}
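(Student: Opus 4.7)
The plan is to reduce $\hat g - g$ to the uniform error of a classical Nadaraya--Watson deconvolution estimator in an errors-in-variables regression with Gaussian noise. By the anticipated Proposition~\ref{prop:initial}, the index estimator admits a decomposition $W_i = \bbeta^\top \bX_i^{(1)} + \tilde\varsigma\,\xi_i + R_i$, where $\xi_i$ is approximately standard normal, $\tilde\varsigma^2=\tilde\sigma^2/\tilde\mu^2 \pconv (\sigma_1/\mu_1)^2$, and $R_i$ is a vanishing remainder. Conditioning on the pilot quantities $(\tilde\mu,\tilde\sigma^2)$, the estimator $\breve g$ is then exactly the Nadaraya--Watson deconvolution estimator of \citet{stefanski1990deconvolving} applied to the pairs $(y_i^{(1)},W_i)$ with asymptotically known noise variance.

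Writing $\breve g(x)=\hat A(x)/\hat B(x)$ with $\hat A(x)=(n_1h_n)^{-1}\sum_i y_i^{(1)}K_n((x-W_i)/h_n)$ and $\hat B(x)$ its analog without $y_i^{(1)}$, I would analyze bias and variance separately. For the bias, Taylor-expanding $g$ around $x$ to order $m$ and applying the vanishing-moment properties of the $m$th-order kernel $K$ yields $\sup_{x\in[a,b]}|\E[\hat A(x)]/\E[\hat B(x)] - g(x)| = O(h_n^m)$, which equals $O((\log n_1)^{-m/2})$ under the prescribed bandwidth. For the variance, the bounded support $[-M_0,M_0]$ of $\phi_K$ combined with the Gaussian form of $\phi_{\tilde\varsigma}$ gives, via Parseval, $\int K_n^2(u)\,du \le C\exp(M_0^2\tilde\varsigma^2/h_n^2)$ and $\|K_n\|_\infty \le C\exp(M_0^2\tilde\varsigma^2/(2h_n^2))$. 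Consequently, $\Var(\hat A(x)),\,\Var(\hat B(x)) \le C(n_1h_n)^{-1}\exp(M_0^2\tilde\varsigma^2/h_n^2) = C\,n_1^{M_0^2\tilde\varsigma^2 c_h-1}(\log n_1)^{1/2}$, which by the condition $2M_0^2(\sigma_1/\mu_1)^2 c_h<1$ decays polynomially in $n_1$ and is therefore negligible next to the bias.

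To upgrade pointwise control to uniform control on $[a,b]$, I would combine an $\varepsilon$-net of polynomial size with a Bernstein-type maximal inequality, exploiting that $K_n$ is Lipschitz with constant bounded by $C\|K_n\|_\infty/h_n$. The denominator $\hat B(x)$ converges uniformly to the density of $\bbeta^\top\bX_1^{(1)}\sim\mathcal{N}(0,1)$, which by Assumption~\ref{asmp:feature} is bounded away from zero on the compact set $[a,b]$, so the ratio $\hat A/\hat B$ inherits the uniform rate. Finally, since $g$ is monotone by Assumption~\ref{asmp:link}, both $\mathcal{R}_\mathrm{naive}$ and $\mathcal{R}_\mathrm{r}$ are non-expansive in the uniform norm around monotone targets, giving $\sup_{x\in[a,b]}|\hat g(x)-g(x)| \le 2\sup_{x\in[a,b]}|\breve g(x)-g(x)|$ and preserving the rate.

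The main obstacle is the entangled dependence among the data $(\bX^{(1)},\by^{(1)})$, the pilot statistics $(\tilde\mu,\tilde\sigma^2)$ that enter the deconvolution kernel through $\tilde\varsigma$, and the regressor proxies $W_i$, all drawn from the same first subsample. I would handle this by conditioning on $(\tilde\mu,\tilde\sigma^2)$, using Proposition~\ref{prop:initial} to replace $\tilde\varsigma$ by its limit up to an $o_p(1)$ error, and exploiting the Fourier representation of $K_n$ to show that its dependence on $\tilde\varsigma$ is Lipschitz near the limit. A secondary delicate point is propagating the residual $R_i$ in $W_i = \bbeta^\top\bX_i^{(1)} + \tilde\varsigma\xi_i + R_i$ through the Lipschitz constant of $K_n$, whose exponential blow-up in $1/h_n^2$ is precisely what necessitates the strict bandwidth condition $2M_0^2(\sigma_1/\mu_1)^2 c_h<1$ rather than a weaker one.
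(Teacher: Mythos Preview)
Your plan is essentially the paper's proof. The paper makes the triangulation explicit by introducing an auxiliary estimator $\tilde g$ built with the \emph{idealized} inputs $\tilde W_i=\bbeta^\top\bX_i^{(1)}+\varsigma Z_i$ (with $\varsigma=\sigma_1/\mu_1$ and i.i.d.\ $Z_i$) and the \emph{idealized} kernel $\tilde K_n$ using the true $\varsigma$; then Fan's errors-in-variables rate (your bias--variance calculation) applies to $\tilde g$ verbatim, and a separate lemma bounds $\sup_{[a,b]}|\breve g-\tilde g|$ by exactly the two Fourier--Lipschitz perturbations you identify, namely $|K_n-\tilde K_n|$ controlled by $|\tilde\varsigma-\varsigma|=O_p(n_1^{-1/2})$ and $|\tilde K_n((W_i-x)/h_n)-\tilde K_n((\tilde W_i-x)/h_n)|$ controlled by $|W_i-\tilde W_i|=O_p(n_1^{-1/2})$, both against the factor $\exp(M_0^2\varsigma^2/h_n^2)$ that forces the condition $2M_0^2\varsigma^2 c_h<1$.

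The one place your write-up could tighten is the phrase ``conditioning on $(\tilde\mu,\tilde\sigma^2)$'': that alone does not decouple the $W_i$'s, since they still share $\tilde\bbeta$, so the i.i.d.\ variance formula $\Var(\hat A)\le C(n_1h_n)^{-1}\int K_n^2$ is not directly available for $\breve g$. The paper's device of passing to $\tilde g$ on the genuinely i.i.d.\ pairs $(\tilde W_i,y_i^{(1)})$ is precisely what licenses the classical bias--variance computation; your perturbation steps then recover $\breve g$ from $\tilde g$. If you rephrase your argument as this two-step triangulation rather than ``condition and compute'', it becomes identical to the paper's proof.
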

Here, according to \citet{fan1993nonparametric}, the logarithmic rate $O_\mathrm{p}(1/(\log n_1)^{m/2})$ reaches a lower bound, indicating that this rate cannot be improved.

\subsection{Marginal Asymptotic Normality of Coefficient Estimators}
\label{sec:inference}
This section demonstrates the marginal asymptotic normality of our estimator $\hat{\bbeta}(\hat{g})$ for $\bbeta$, facilitated by the estimators of the inferential parameters, $\hat{\mu}(\hat{g})$ and $\hat{\sigma}(\hat{g})$. These results are directly applicable to hypothesis testing and the construction of confidence intervals for any finite subset of the $\beta_j$'s.

\subsubsection{Unit Covariance ($\bSigma = \bI_p$) and $p > n$ Case}

As previously noted, the inferential parameters vary depending on the estimator considered. In this section, we focus on the ridge regularized estimator with unit covariance $\bSigma = \bI_p$. We will also present additional results for generalized covariance matrices and the ridgeless scenario later.
\begin{theorem}
\label{thm:asyN-ridge}
We consider the coefficient estimator $\hat{\bbeta}(\hat{g})$ with $J(\bb)=\lambda\|\bb\|^2/2$, and the inferential estimators $(\hat{\mu}(\hat{g}), \hat{\sigma}(\hat{g}))$, associated with the link estimator $\hat{g}(\cdot)$.
    Suppose that $\bSigma=\bI_p$ and  Assumptions \ref{asmp:high-dimension}-\ref{asmp:link} hold.
    Then, a conditional distribution of $(\hat{\bbeta}(\hat{g}), \hat{\mu}(\hat{g}), \hat{\sigma}(\hat{g}))$ with a fixed event on $\hat{g}(\cdot)$ satisfies the following: for any coordinate $j\in[p]$ satisfying $\sqrt{p}\beta_j=O(1)$, we have
    \begin{align}
    \label{eq:asyN-ridge1}
        T_j:=\frac{\sqrt{p}({\hat{\beta}_j(\hat{g})-\hat{\mu}(\hat{g})\beta_j})}{\hat{\sigma}(\hat{g})}\dconv \mathcal{N}(0,1)
    \end{align}
    as $n,p\to\infty$ with the regime \eqref{eq:high-dim}.
    Moreover, for any finite set of coordinates $\mathcal{S}\subset[p]$ satisfying $\sqrt{p}\|\bbeta_\mathcal{S}\|=O(1)$, we have, as $n,p \to \infty$,
    \begin{align}
    \label{eq:asyN-ridge3}
        \frac{\sqrt{p}({\hat{\bbeta}_\mathcal{S}(\hat{g})-\hat{\mu}(\hat{g})\bbeta_\mathcal{S})}}{\hat\sigma(\hat{g})}\dconv \mathcal{N}(\bzero,\bI_{|\mathcal{S}|}).
    \end{align}
\end{theorem}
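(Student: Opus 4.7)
The strategy is to exploit the data-splitting construction: the link estimator $\hat{g}(\cdot)$ is a measurable function of $(\bX^{(1)},\by^{(1)})$, which is independent of the second subsample $(\bX^{(2)},\by^{(2)})$ used to form $\hat{\bbeta}(\hat{g})$. Conditioning on the $\sigma$-algebra generated by $(\bX^{(1)},\by^{(1)})$, we may treat $\hat{g}$ as a deterministic monotone function $\bar{g}$, and the problem reduces to two pieces: (a) marginal asymptotic normality of $\hat{\bbeta}(\bar{g})$ around oracle inferential parameters $(\mu_*(\bar{g}),\sigma_*(\bar{g}))$ for any sufficiently regular deterministic $\bar{g}$; and (b) consistency of the observable adjustments $\hat{\mu}(\bar{g}),\hat{\sigma}(\bar{g})$ for those oracle parameters. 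Theorem \ref{thm:link-est} and Assumption \ref{asmp:link} together guarantee that $\hat{g}$ lies in the class covered by (a) and (b) with probability tending to one, justifying the passage from the conditional to the marginal statement.

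For piece (a), the plan is to extend the leave-one-out argument of \citet{zhao2022asymptotic}, originally tailored to the logistic MLE, to the strictly convex ridge-penalized M-estimator characterized by the first-order condition
\begin{align*}
\sum_{i=1}^{n_2}\bX_i^{(2)}\bigl(\bar{g}(\bX_i^{(2)\top}\hat{\bbeta}(\bar{g}))-y_i^{(2)}\bigr)+\lambda\hat{\bbeta}(\bar{g})=\bzero.
\end{align*}
The key steps are: leave out the $j$th column of $\bX^{(2)}$; use the rotational invariance of $\mathcal{N}(\bzero,\bI_p)$ (this is where $\bSigma=\bI_p$ is essential) to rewrite $\hat{\beta}_j(\bar{g})$ as a linear functional of an independent standard Gaussian vector plus a remainder; control the remainder via Lipschitz concentration of the leave-one-out estimator; and conclude by a one-dimensional CLT. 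The scalars $\mu_*(\bar{g}),\sigma_*(\bar{g})$ emerge as the unique solution of a fixed-point system characterizing the asymptotic risk of ridge-regularized M-estimators in the regime \eqref{eq:high-dim}, in the style of the CGMT/AMP tools surveyed in Section \ref{sec:literature}.

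For piece (b), the observable-adjustment framework of \citet{bellec2022observable} applied to the surrogate loss \eqref{eq:loss} yields second-order Stein identities that translate into the approximations $\|\hat{\bbeta}(\bar{g})\|^2\approx\mu_*(\bar{g})^2+\sigma_*(\bar{g})^2$ and $n_2^{-1}\|\by^{(2)}-\bar{g}(\bX^{(2)}\hat{\bbeta}(\bar{g}))\|^2\approx(\hat{v}_\lambda+\lambda)^2\sigma_*(\bar{g})^2/\kappa_2$; these are precisely what the definitions of $\hat{\mu}(\bar{g})$ and $\hat{\sigma}^2(\bar{g})$ in Section \ref{sec:inferential_estimator} are designed to invert, giving consistency. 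Combining (a) and (b) via Slutsky produces the univariate conclusion \eqref{eq:asyN-ridge1} conditionally on $\hat{g}$. The joint statement \eqref{eq:asyN-ridge3} for a finite $\mathcal{S}$ with $\sqrt{p}\|\bbeta_{\mathcal{S}}\|=O(1)$ follows by running the leave-out argument simultaneously on the $|\mathcal{S}|$ columns of $\bX^{(2)}$ indexed by $\mathcal{S}$: since $\bSigma=\bI_p$ these columns are mutually independent Gaussians, so the limiting covariance is the identity on $\R^{|\mathcal{S}|}$. Passing from conditional to marginal convergence is handled by dominated convergence of characteristic functions, using that $\hat{g}$ is uniformly close to $g$ on $[a,b]$ with high probability.

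The main obstacle will be piece (a): the leave-one-out CLT of \citet{zhao2022asymptotic} relies on logistic-specific derivative and moment identities, and replacing these by uniform bounds valid for every monotone $\bar{g}$ in the class of plausible link estimates — in particular, uniform control of the remainder and of the Hessian of the surrogate loss — requires nontrivial rework. A secondary difficulty is verifying that the fixed-point system for $(\mu_*(\bar{g}),\sigma_*(\bar{g}))$ has a unique solution depending continuously on $\bar{g}$ in an appropriate topology, since this continuity is what ensures the Stein-based observable estimates from piece (b) actually track the oracle CLT parameters of piece (a) once the plug-in $\hat{g}$ is substituted.
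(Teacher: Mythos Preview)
Your plan would work in principle, but it is a substantially harder route than the one the paper actually takes. The paper does \emph{not} carry out a leave-one-out analysis or invoke a CGMT/AMP fixed-point system for $(\mu_*,\sigma_*)$. Instead it proves a master theorem (Theorem \ref{thm:master-zsc}) via a global rotational-symmetry argument: because the surrogate loss satisfies $\bar\ell(\bX_i^\top\bb;y_i)=\bar\ell((\bU^\top\bX_i)^\top\bU\bb;y_i)$ and the ridge penalty obeys $J(\bU\bb)=J(\bb)$ for every orthogonal $\bU$ fixing $\bbeta$, one has $\bU\hat\bbeta(\hat g)\overset{\mathrm d}{=}\hat\bbeta(\hat g)$, which forces the normalized residual $\bP_{\bbeta}^\perp\hat\bbeta(\hat g)/\|\bP_{\bbeta}^\perp\hat\bbeta(\hat g)\|$ to be uniform on the unit sphere in $\bbeta^\perp$. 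Coordinate-wise asymptotic normality then drops out of the classical sphere-to-Gaussian equivalence; no remainder control or column-deletion is needed. Correspondingly, the oracle inferential parameters are taken to be the \emph{random} quantities $\mu_n=\bbeta^\top\hat\bbeta(\hat g)$ and $\sigma_n^2=\|\bP_{\bbeta}^\perp\hat\bbeta(\hat g)\|^2$, and Lemma \ref{lem:adj-consist-ridge} (via \citet{bellec2022observable}) shows $\hat\mu(\hat g),\hat\sigma(\hat g)$ converge to these directly---so your ``secondary difficulty'' about existence/uniqueness/continuity of a fixed-point system never arises. For \eqref{eq:asyN-ridge3} the paper applies the univariate result to $\bv^\top\hat\bbeta(\hat g)$ after an orthogonal change of basis and finishes with Cram\'er--Wold, rather than a simultaneous multi-column leave-out.

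The practical upshot: your piece (b) and the Slutsky/conditioning scaffolding match the paper, but your piece (a) is replaced by a two-line symmetry observation that sidesteps exactly the obstacles you flagged (extending logistic-specific leave-one-out bounds to arbitrary monotone $\bar g$, and characterizing the fixed-point system). Your approach would buy robustness to non-rotation-invariant penalties, but for $J(\bb)=\lambda\|\bb\|^2/2$ with $\bSigma=\bI_p$ it is unnecessary machinery.
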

This result also implies that $\hat{\beta}_j(\hat{g})/\hat{\mu}(\hat{g})$ is an asymptotically unbiased estimator of $\beta_j$. Note that the convergence of the conditional distribution is ensured by the non-degeneracy property of the conditional event, as defined by $({\bX}^{(1)}, {\by}^{(1)})$; see \citet{goggin1994convergence} for details.

We highlight two key contributions of Theorem \ref{thm:asyN-ridge}. First, it remains valid even when the ratio $\kappa = p/n$ exceeds one, a notable distinction compared to a similar result by \citet{bellec2022observable}, which holds only when $\kappa$ is less than one. Second, the statistic $T_j$ is independent of any unknown parameters, contrasting with, for example, the marginal asymptotic normality in logistic regression by \citet{zhao2022asymptotic}, which relies on unknown inferential parameters.
\\

\noindent
\textbf{Application to Statistical Inference}:
From Theorem \ref{thm:asyN-ridge}, we construct a confidence interval $\mathrm{CI}_{{1-\alpha}}^j$ for each $\beta_j$ with a confidence level $(1-\alpha)$ as follows:
\begin{align}
    \mathrm{CI}_{{1-\alpha}}^j := \frac{1}{\hat\mu(\hat{g})}\sbr{\hat{\beta}_j(\hat{g})-z_{(1-\alpha/2)}\frac{\hat{\sigma}(\hat{g})}{\sqrt{p}},~
    \hat{\beta}_j(\hat{g})+z_{(1-\alpha/2)}\frac{\hat{\sigma}(\hat{g})}{\sqrt{p}}},
\end{align}
where $j \in [p]$ and $z_{(1-\alpha/2)}$ is the $(1-\alpha/2)$-quantile of the standard normal distribution. This construction ensures the coverage probability adheres to the specified confidence level asymptotically.

\begin{corollary}
\label{cor:CI-consistent}
Under the settings of Theorem \ref{thm:asyN-ridge}, for any $\alpha\in(0,1)$, we have the following as $n,p \to \infty$ with the regime \eqref{eq:high-dim}:
\begin{align}
    \sup_{1\le j\le p}\abs{\Pr\rbr{\beta_j\in\mathrm{CI}_{1-\alpha}^j}-(1-\alpha)}\to0.
\end{align}
\end{corollary}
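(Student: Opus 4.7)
The plan is to translate the coverage statement into an asymptotic normality statement for the pivot $T_j$ from Theorem \ref{thm:asyN-ridge}, invoke the theorem for pointwise convergence, and then upgrade to uniformity over $j\in[p]$ via a subsequence extraction. By construction, $\beta_j \in \mathrm{CI}_{1-\alpha}^j$ is equivalent to $|T_j|\le z_{1-\alpha/2}$, so the coverage probability equals $\Pr(|T_j|\le z_{1-\alpha/2})$. For each fixed $j$ with $\sqrt{p}\beta_j = O(1)$, Theorem \ref{thm:asyN-ridge} gives $T_j \dconv \mathcal{N}(0,1)$, and the Portmanteau theorem (using continuity of the standard Gaussian CDF at $\pm z_{1-\alpha/2}$) then yields $\Pr(|T_j|\le z_{1-\alpha/2}) \to 2\Phi(z_{1-\alpha/2}) - 1 = 1-\alpha$.

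To upgrade to uniformity, I would argue by contradiction: if the supremum does not vanish, there exist $\epsilon>0$ and a sequence $j_n\in[p(n)]$ along a subsequence of $n$ with $|\Pr(|T_{j_n}|\le z_{1-\alpha/2}) - (1-\alpha)| \ge \epsilon$. Passing to a further subsequence, $\sqrt{p}|\beta_{j_n}|$ tends to some $L \in [0,\infty]$. If $L<\infty$, Theorem \ref{thm:asyN-ridge} applied along $\{j_n\}$ gives $T_{j_n}\dconv\mathcal{N}(0,1)$, contradicting $\epsilon>0$.

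The main obstacle is the divergent case $L=\infty$. Although the identification condition $\bbeta^\top\bSigma\bbeta=1$ together with $\lambda_{\min}(\bSigma)\ge c_\Sigma^{-1}$ forces $\|\bbeta\|^2 = O(1)$, individual coordinates may still satisfy $\sqrt{p}|\beta_{j_n}|\to\infty$, and Theorem \ref{thm:asyN-ridge} does not directly apply. To treat this case I would expand
\begin{align}
T_{j_n} = \frac{\sigma_*(\hat g)}{\hat\sigma(\hat g)}\,Z_{j_n} + \frac{\sqrt{p}\,\beta_{j_n}(\mu_*(\hat g) - \hat\mu(\hat g))}{\hat\sigma(\hat g)} + o_\mathrm{p}(1),
\end{align}
where $Z_{j_n}\dconv\mathcal{N}(0,1)$ follows from the observable-adjustment analysis underlying Theorem \ref{thm:asyN-ridge}, and show that the middle bias term is $o_\mathrm{p}(1)$ along the subsequence. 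This amounts to a multiplicative concentration bound of the form $|\hat\mu(\hat g) - \mu_*(\hat g)| = o_\mathrm{p}(1/(\sqrt{p}\max_{j}|\beta_j|))$, which is sharper than the additive rate $O_\mathrm{p}(n^{-1/2})$ implicit in Theorem \ref{thm:asyN-ridge}. I expect this refinement to follow from a careful re-examination of the second-order Stein formula arguments of \citet{bellec2022observable} that drive the proof of Theorem \ref{thm:asyN-ridge}, extended to deliver error bounds in the required multiplicative form. Once this sharper rate is in hand, $T_{j_n}\dconv\mathcal{N}(0,1)$ along the subsequence and the contradiction closes, yielding the uniform coverage claim.
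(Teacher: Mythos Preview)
The paper does not give a separate proof of Corollary~\ref{cor:CI-consistent}; it is stated as a direct consequence of Theorem~\ref{thm:asyN-ridge}, and your translation of the coverage event into $|T_j|\le z_{1-\alpha/2}$ together with the subsequence strategy for uniformity is the natural (and intended) route.

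Your treatment of the case $L=\infty$ has a genuine gap, however. You propose to control the bias term $\sqrt{p}\,\beta_{j_n}\bigl(\mu_*(\hat g)-\hat\mu(\hat g)\bigr)/\hat\sigma(\hat g)$ via a sharper rate for $\hat\mu-\mu_*$, but this overlooks an obstruction that persists even under the ideal centering $(\mu_n,\sigma_n)$. By Step~3 of the proof of Theorem~\ref{thm:master-zsc} (here $\bSigma=\bI_p$, so $\eta_j=\beta_j$ and $\|\bmeta\|=1$),
\[
\sqrt{p}\,\frac{\hat\beta_{j_n}(\hat g)-\mu_n\beta_{j_n}}{\sigma_n}\ \overset{\rm d}{=}\ \check\sigma_{j_n}Q+o_\mathrm{p}(1),\qquad \check\sigma_{j_n}^2=1-\beta_{j_n}^2,\quad Q\sim\mathcal N(0,1).
\]
If along the subsequence $\beta_{j_n}\to c\neq 0$ (consistent with $L=\infty$ and $\|\bbeta\|=1$; e.g.\ $\beta_1\equiv 1$, $\beta_j\equiv 0$ for $j\ge 2$), the exact-parameter pivot converges to $\mathcal N(0,1-c^2)$ rather than $\mathcal N(0,1)$, and the coverage probability tends to $2\Phi\bigl(z_{1-\alpha/2}/\sqrt{1-c^2}\bigr)-1>1-\alpha$. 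No refinement of the $\hat\mu$ rate can repair this, so your claim $Z_{j_n}\dconv\mathcal N(0,1)$ is not available in general. Conversely, when $\beta_{j_n}\to 0$ (even if $\sqrt p\,|\beta_{j_n}|\to\infty$) one has $\check\sigma_{j_n}\to 1$, and the existing rate $|\hat\mu-\mu_n|=O_\mathrm{p}(n^{-1/2})$ already makes the bias term $O_\mathrm{p}(\sqrt{\kappa}\,|\beta_{j_n}|)=o_\mathrm{p}(1)$, so no sharper concentration is needed. The uniform-in-$j$ claim therefore hinges on the delocalization condition $\max_{j}|\beta_j|\to 0$, which the paper does not state explicitly; under it your subsequence argument closes without any refinement, and without it the corollary as written can fail.
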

Hence, for testing the hypothesis $H_0^j:\beta_j=0$ against $H_1^j:\beta_j\neq0$ at level $\alpha\in(0,1)$, we can use the corrected $t$-statistics in \eqref{eq:asyN-ridge1}. The test that rejects the null hypothesis $H_0^j$ if
\begin{align}
    \frac{\hat\sigma(\hat{g})z_{(1-\alpha/2)}}{\sqrt{p}\tau_j}\le|\hat{\beta}_j(\hat{g})|
\end{align}
controls the asymptotic size of the test at the level $\alpha$. 
Additionally, it is feasible to develop a variable selection procedure that identifies variables related to the response. Specifically, the $p$-value associated with $H_0^j$ and the statistic $\sqrt{p}\hat{\beta}_j(\hat{g})/\hat{\sigma}(\hat{g})$ can serve as importance statistics for the $j$th covariate. This approach facilitates variable selection procedures that control the false discovery rate, as detailed in sources such as \citet{benjamini1995controlling,candes2018panning,xing2023controlling,dai2023false}.

\subsubsection{General Covariance $\bSigma$ and $p < n$ Case}
\label{sec:Sigma}
We extend Theorem \ref{thm:asyN-ridge} to scenarios with a general covariance matrix $\bSigma$ in unregularized settings. To this end, we utilize the estimators $(\hat{\mu}_0(\hat{g}), \hat{\sigma}_0(\hat{g}))$, which are defined for inferential parameters in Section \ref{sec:inferential_estimator}. Recall that the precision matrix $\bTheta$ is defined as $\bSigma^{-1}$.
\begin{theorem}
\label{thm:asyN}
    We consider the coefficient estimator $\hat{\bbeta}(\hat{g})$ with $J(\bb) \equiv 0$, and the inferential estimators $(\hat{\mu}_0(\hat{g}), \hat{\sigma}_0(\hat{g}))$, associated with the link estimator $\hat{g}(\cdot)$.
    Suppose that  Assumptions \ref{asmp:high-dimension}-\ref{asmp:link} hold.
    Then, a conditional distribution of $(\hat{\bbeta}(\hat{g}), \hat{\mu}_0(\hat{g}), \hat{\sigma}_0(\hat{g}))$ with a fixed event on $\hat{g}(\cdot)$ satisfies the following: for any coordinate $j\in[p]$ satisfying $\sqrt{p} \tau_j \beta_j=O(1)$, we have
    \begin{align}
    \label{eq:asyN1}
        \frac{\sqrt{p}({\hat{\beta}_j(\hat{g})-\hat{\mu}_0(\hat{g})\beta_j})}{\hat{\sigma}_0(\hat{g})/\tau_j}\dconv \mathcal{N}(0,1)
    \end{align}
     as $n,p\to\infty$ with the regime \eqref{eq:high-dim}.
    Moreover, for a finite set of coordinates $\mathcal{S}\subset\{1,\ldots,p\}$, we have
    \begin{align}
    \label{eq:asyN3}
        \frac{\sqrt{p}\bTheta_\mathcal{S}^{-1/2}({\hat{\bbeta}_\mathcal{S}(\hat{g})-\hat{\mu}_0(\hat{g})\bbeta_\mathcal{S})}}{\hat\sigma_0(\hat{g})}\dconv \mathcal{N}(\bzero,\bI_{|\mathcal{S}|}),
    \end{align}
    where the submatrix $\bTheta_\mathcal{S}$ consists of $\Theta_{ij}$ for $i,j\in\mathcal{S}$.
\end{theorem}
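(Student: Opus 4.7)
The plan is to reduce the general-covariance setting to the isotropic one via whitening, and then to upgrade the coordinate-wise asymptotic normality (the unregularized analogue of Theorem~\ref{thm:asyN-ridge}) to a linear-form CLT using rotational invariance of the Gaussian design. First I would introduce the whitened covariates $\bZ_i:=\bSigma^{-1/2}\bX_i\sim\mathcal{N}(\bzero,\bI_p)$ and the whitened signal $\bm{\gamma}:=\bSigma^{1/2}\bbeta$, so that $\|\bm{\gamma}\|^2=\bbeta^\top\bSigma\bbeta=1$ and $\bX_i^\top\bb=\bZ_i^\top(\bSigma^{1/2}\bb)$. Since the surrogate loss in \eqref{eq:estimator} with $J\equiv 0$ depends on $\bb$ only through the inner products $\bX_i^\top\bb$, its minimizer transforms covariantly as $\hat{\bm{\gamma}}(\hat g):=\bSigma^{1/2}\hat{\bbeta}(\hat g)$ and coincides with the estimator obtained by running \eqref{eq:estimator} on $(\bZ^{(2)},\by^{(2)})$ with the same link $\hat g$; $\hat g$ itself is unchanged, since it is built from indices $\bbeta^\top\bX_i^{(1)}=\bm{\gamma}^\top\bZ_i^{(1)}$. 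A short check shows the inferential estimators $(\hat\mu_0(\hat g),\hat\sigma_0(\hat g))$ are also invariant: $\|\bX^{(2)}\hat{\bbeta}\|^2=\|\bZ^{(2)}\hat{\bm{\gamma}}\|^2$, $\hat g(\bX^{(2)}\hat{\bbeta})=\hat g(\bZ^{(2)}\hat{\bm{\gamma}})$, and the $\bSigma^{1/2}$ factors cancel cleanly inside the hat-matrix defining $\hat v_0$.

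The key technical step is then a linear-form CLT in the whitened problem: for any deterministic unit vector $\bm{u}\in\R^p$ with $\sqrt p\,\bm{u}^\top\bm{\gamma}=O(1)$,
\begin{align}
 \sqrt p\,\bm{u}^\top\!\left(\hat{\bm{\gamma}}(\hat g)-\hat\mu_0(\hat g)\bm{\gamma}\right)\!/\hat\sigma_0(\hat g)\;\dconv\;\mathcal{N}(0,1).
\end{align}
I would derive this by rotational invariance: pick orthogonal $\bm{O}\in O(p)$ with $\bm{O}\bm{u}=\bm{e}_1$; the joint law of $(\bZ^{(2)},\by^{(2)})$ with signal $\bm{\gamma}$ equals that of $(\bZ^{(2)}\bm{O}^\top,\by^{(2)})$ with signal $\bm{O}\bm{\gamma}$, and under this relabeling $\bm{u}^\top\hat{\bm{\gamma}}$ becomes the first coordinate of the estimator in the rotated problem while $\hat\mu_0$ and $\hat\sigma_0$ are unchanged (they depend only on rotation-invariant scalars). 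Applying the coordinate-wise CLT in the rotated problem then yields the displayed statement.

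With the linear-form CLT in hand, both parts of the theorem follow by choosing $\bm{u}$ appropriately. For \eqref{eq:asyN1}, take $\bm{u}=\tau_j\bSigma^{-1/2}\bm{e}_j$, which has $\|\bm{u}\|^2=\tau_j^2\Theta_{jj}=1$, $\bm{u}^\top\bm{\gamma}=\tau_j\beta_j$, and $\bm{u}^\top\hat{\bm{\gamma}}=\tau_j\hat\beta_j(\hat g)$; the hypothesis $\sqrt p\,\tau_j\beta_j=O(1)$ matches the required scaling, and dividing by $\tau_j$ yields \eqref{eq:asyN1}. For \eqref{eq:asyN3} I would invoke the Cram\'er--Wold device: for a unit $\bm{c}\in\R^{|\mathcal{S}|}$, embed $\bTheta_\mathcal{S}^{-1/2}\bm{c}$ into $\R^p$ by zeros outside $\mathcal{S}$ to form $\bm{v}$, and set $\bm{u}=\bSigma^{-1/2}\bm{v}$; a short computation gives $\|\bm{u}\|^2=\bm{c}^\top\bTheta_\mathcal{S}^{-1/2}\bTheta_\mathcal{S}\bTheta_\mathcal{S}^{-1/2}\bm{c}=1$ and $\bm{u}^\top\hat{\bm{\gamma}}=\bm{c}^\top\bTheta_\mathcal{S}^{-1/2}\hat{\bbeta}_\mathcal{S}(\hat g)$, after which the linear-form CLT and Cram\'er--Wold conclude. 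The main obstacle will be the linear-form CLT itself: while the rotational-invariance reduction is clean for deterministic directions, one still needs the underlying coordinate-wise normality in the ridgeless isotropic setting, which amounts to rerunning the proof strategy of Section~\ref{sec:proof_outline} without a ridge penalty and is viable only because this theorem operates in the $p<n$ regime, where the unregularized M-estimator is well-defined and unique.
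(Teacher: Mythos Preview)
Your proposal is correct and follows essentially the same route as the paper: whitening to the isotropic design, a rotational-invariance argument to obtain a linear-form CLT, consistency of the observable adjustments $(\hat\mu_0,\hat\sigma_0)$ via \citet{bellec2022observable} (the paper isolates this as Lemma~\ref{lem:mu-conv-2}), and the Cram\'er--Wold device for \eqref{eq:asyN3}. The only presentational difference is that the paper whitens with a Cholesky factor $\bL$ chosen so that the $j$th coordinate maps directly to a whitened coordinate (Step~1 of Theorem~\ref{thm:master-zsc}), whereas you use the symmetric square root $\bSigma^{1/2}$ and recover coordinates through the linear form $\bm{u}=\tau_j\bSigma^{-1/2}\bm{e}_j$; both reductions are equivalent once the rotational argument is in place.
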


\section{Experiments} \label{sec:experiments}

This section provides numerical validations of our estimation framework as outlined in Section \ref{sec:method}. The efficiency of our proposed estimator is subsequently compared with that of other estimators.

We examine two high-dimensional scenarios: $n<p$ and $n>p$. For the scenario where $n>p$, we assume the true coefficient vector follows a uniform distribution on the sphere: $\bbeta \sim \mathrm{Unif}(\mathbb{S}^{p-1})$. In the case of $n<p$, we set $\beta_1=\cdots=\beta_{100}=\sqrt{p/100}$ and $\beta_{101}=\cdots=\beta_p=0$. We generate response variables $y_i$ for Gaussian predictors $\bX_i$ with an identity covariance matrix $\bSigma = \bI_p$, under the following four scenarios:
\begin{enumerate}
\item[(i)] \textsf{Cloglog}: $y_i\mid\bX_i\sim\mathrm{Bern}(g_\mathrm{(i)}(\bbeta^\top\bX_i))$ with $g_\mathrm{(i)}(t)=1-\exp(-\exp(t))$; 
\item[(ii)] \textsf{xSqrt}: $y_i\mid\bX_i\sim\mathrm{Pois}(g_\mathrm{(ii)}(\bbeta^\top\bX_i))$ with $g_\mathrm{(ii)}(t)=t+\sqrt{t^2+1}$; 
\item[(iii)] \textsf{Cubic}: cubic regression $y_i=g_\mathrm{(iii)}(\bbeta^\top\bX_i)+\varepsilon_i$ with  $\varepsilon_i\sim\mathcal{N}(0,1/2)$ and $g_\mathrm{(iii)}(t)=t^3/3$; 
\item[(iv)] \textsf{Piecewise}: piecewise regression $y_i=g_\mathrm{(iv)}(\bbeta^\top\bX_i)+\varepsilon_i$ with $\varepsilon_i\sim\mathcal{N}(0,1/5)$ and $g_\mathrm{(iv)}(t)=(0.2t-2.3)1_{(-\infty,-1]}+2.5t1_{(-1,1)}+(0.2t+2.3)1_{[1,\infty)}$. 
\end{enumerate}

\subsection{Index Estimator}
\label{sec:sim-w}
We validate the normal approximation of the index estimator $W_i$ as shown in \eqref{eq:W}. For cases where $n > p$, we set $(n, p) = (500, 50)$ for the \textsf{Cloglog} model and $(n, p) = (500, 200)$ for the other models. For cases where $n < p$, we set $(n, p) = (250, 500)$ and apply the ridge regularized estimator to all models. We assign the maximum likelihood estimator (MLE) of logistic regression to the pilot estimator for (i) \textsf{Cloglog}, the MLE of Poisson regression for (ii) \textsf{xSqrt}, and the least squares estimator for both (iii) \textsf{Cubic} and (iv) \textsf{Piecewise} models. We calculate $\tilde\mu(\bW - \bX \bbeta) / \tilde\sigma$ using $1,000$ replications for each setup.

Figure \ref{fig:pilot} displays the results. In all settings, the difference between the index estimator and the index follows a Gaussian distribution, as expected.

\begin{figure}[ht]
  \centering
\includegraphics[keepaspectratio, width=16cm]{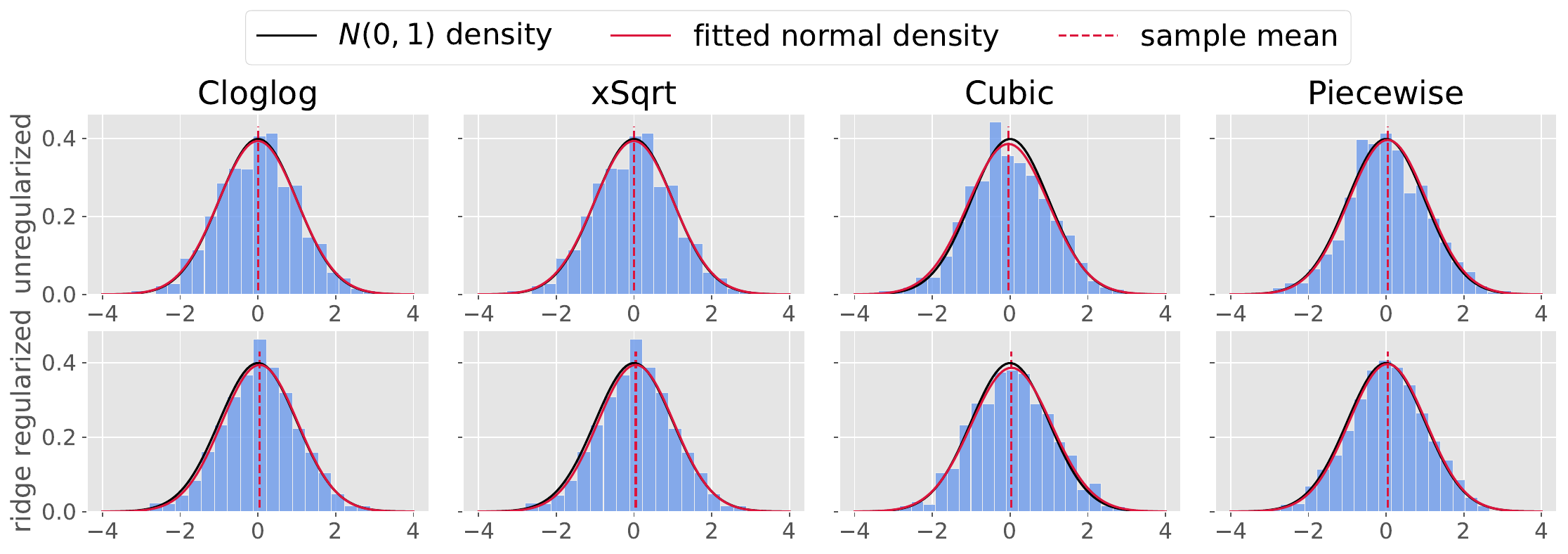}
  \caption{Histograms of the first coordinate of the statistics $\tilde\mu(\bW-\bX\bbeta)/\tilde\sigma$ over $1,000$ replications. According to Proposition \ref{prop:initial}, these histograms are expected to resemble the $\mathcal{N}(0,1)$ density. The columns correspond to each model, ranging from (i) \textsf{Cloglog} to (iv) \textsf{Piecewise}, while the rows represent unregularized and ridge-regularized estimations for cases where $n>p$ and $n<p$, respectively.
  }\label{fig:pilot}
\end{figure}

\subsection{Link Function Estimator}
\label{sec:experiment-link}
Next, we evaluate the numerical performance of the link estimator $\hat{g}(\cdot)$, constructed from $(W_1,\ldots,W_n)$, using a fixed bandwidth for each $n$. Figure \ref{fig:link} (left panel) shows that the estimation error of $\hat{g}(\cdot)$ for (iv) \textsf{Piecewise} uniformly approaches zero as the sample size increases. The right four panels in Figure \ref{fig:link} display the squared losses of $\hat{g}(\cdot)$ evaluated over the interval $[-3,3]$, which all decrease as $n$ increases. 

\begin{figure}[ht]
  \centering
\includegraphics[keepaspectratio, width=16cm]{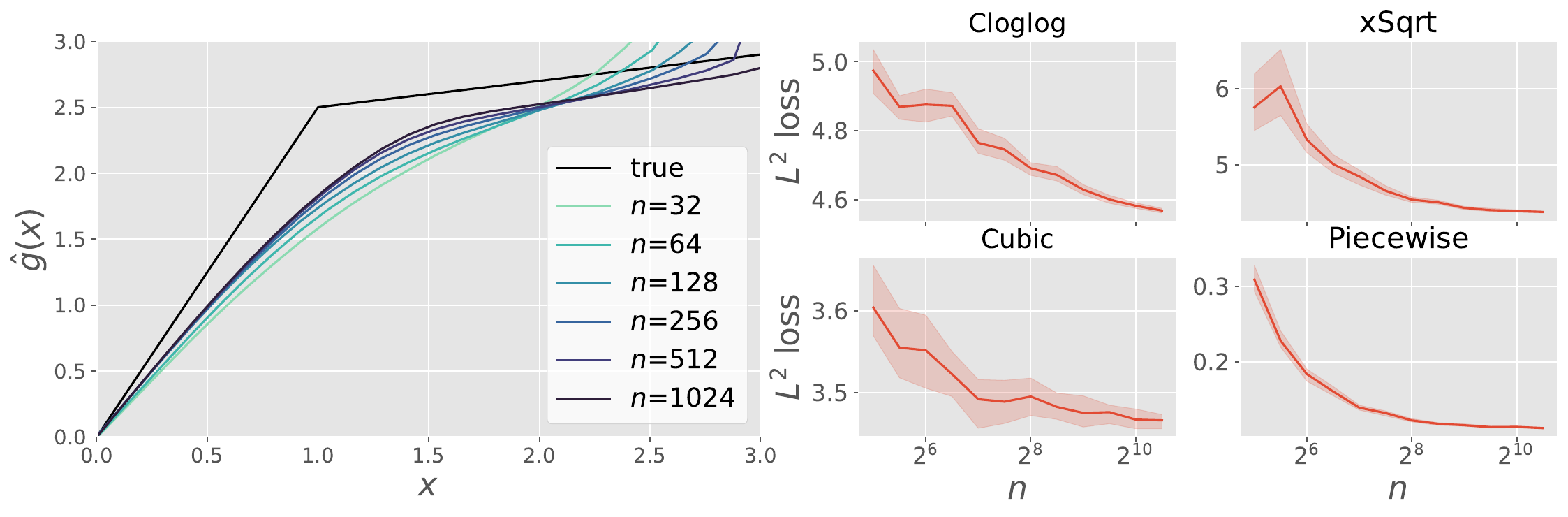}
  \caption{Estimated link functions $\hat{g}(\cdot)$ for (iv) \textsf{Piecewise} were obtained with a fixed ratio $p/n=0.6$ and $n=32, 64, \ldots, 1024$, averaged over $1,000$ replications (left). The squared loss for $\hat{g}(\cdot)$, evaluated over the interval $[-3, 3]$, for (i) \textsf{Cloglog} to (iv) \textsf{Piecewise}, as defined in the previous section, with a fixed ratio $p/n=0.4$ averaged over $1,000$ replications (right).}\label{fig:link}
\end{figure}

\subsection{Our Coefficient Estimator}
We examine the asymptotic normality of each coordinate of the estimator $\hat\bbeta(\hat{g})$ for the true coefficients. As in Section \ref{sec:experiment-link}, we construct the estimator using a fixed bandwidth and apply the rearrangement operator $\mR_\mathrm{r}[\cdot]$ as defined in \eqref{def:rearrangement} over the interval $[-3,3]$ to obtain $\hat{g}(\cdot)$. We then compute $\hat\bbeta(\hat{g})$ according to \eqref{eq:estimator} using $J(\cdot)\equiv\bzero$ when $n>p$ and $J(\bb)=\|\bb\|^2$ when $n\leq p$. Figure \ref{fig:beta_hat} shows the marginal normal approximation of the estimators under these conditions. All histograms closely resemble the standard normal density, corroborating the asymptotic normality stated in Theorems \ref{thm:asyN-ridge} and \ref{thm:asyN}.
\begin{figure}[ht]
  \centering
\includegraphics[keepaspectratio, width=16cm]{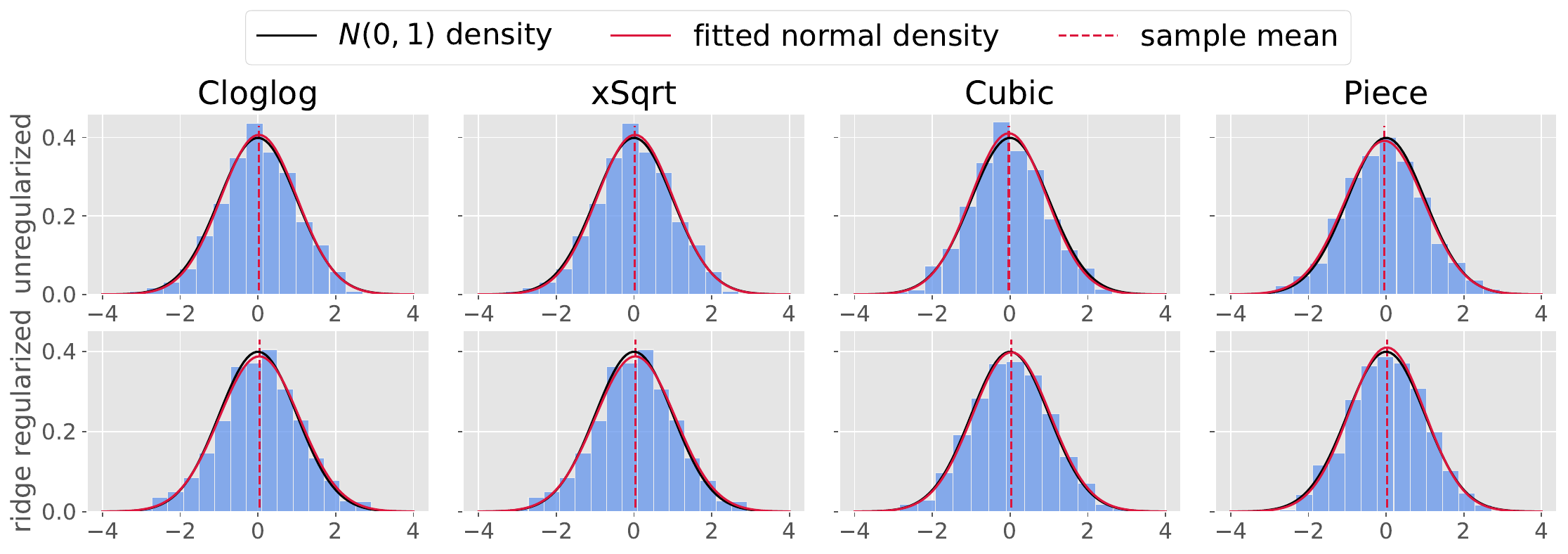}
  \caption{Histograms of the first coordinate of the statistics $\sqrt{p}(\hat\beta_1(\hat{g})-\hat{\mu}(\hat{g})\beta_1)/\hat\sigma(\hat{g})$ made from $1,000$ replications.  The columns correspond to each model from (i) \textsf{Cloglog} to (iv) \textsf{Piecewise}, and the rows correspond to unregularized and ridge-regularized estimation under $n>p$ and $n<p$, respectively. They are expected to approach $\mathcal{N}(0,1)$ density by Theorems \ref{thm:asyN-ridge} and \ref{thm:asyN}. }
\label{fig:beta_hat}
\end{figure}

\subsection{Efficiency Comparison}
Finally, we compare the estimation efficiency of the proposed estimator with several pilot estimators. We use the effective asymptotic variance $\sigma^2_*/\mu_*^2$ as an efficiency measure, which is the inverse of the effective signal-to-noise ratio as described in \cite{feng2022unifying}. We estimate this variance using the statistic ${\hat\bbeta^\top\hat\bbeta}/({\hat\bbeta^\top\bbeta})-1$ for an estimator $\hat\bbeta$. This statistic is a reasonable approximation of the asymptotic variance of the debiased version of $\hat\bbeta$ and converges almost surely to the effective asymptotic variance under certain conditions (see Section \ref{sec:proof_outline} for details).

From a practical perspective, we analyze the scatter plot of $(W_i, y_i)$ and manually specify a functional form for $g(\cdot)$ to conduct parametric regression. We estimate parameters $a, b, c \in \R$ in different forms: $\check{g}(t) = 1/(1+\exp(-at+b))$ for case (i), $\check{g}(t) = a\exp(t)+b$ for case (ii), $\check{g}(t) = at^3 + bt^2 + ct$ for case (iii), and $\check{g}(t) = a/(1+\exp(-bt+c))-a/2$ for case (iv). We then use these estimates to construct the link function. Additionally, we introduce new data-generating processes: \textsf{Logit}, where $y_i \mid \bX_i \sim \mathrm{Bern}(1/(1+\exp(\bbeta^\top\bX_i)))$; \textsf{Poisson}, where $y_i \mid \bX_i \sim \mathrm{Pois}(\exp(\bbeta^\top\bX_i))$; \textsf{Cubic+}, where $y_i = g_\mathrm{(iii)}(\bbeta^\top\bX_i) + \varepsilon_i$, with $\varepsilon_i \sim \mathcal{N}(5, 1/2)$; and \textsf{Piecewise+}, where $y_i = g_\mathrm{(iv)}(\bbeta^\top\bX_i) + \varepsilon_i$, with $\varepsilon_i \sim \mathcal{N}(5, 1/5)$.

Table \ref{tab:efficiency} displays the efficiency measures for our proposed estimator and others across 100 replications. We find that our proposed estimator is generally more efficient in most settings, except when the estimators are specifically tailored to the models. This highlights the broad applicability of our estimator.

\begin{table}[ht]
\centering
\begin{tabular}{l|cccc}
                    & \textsf{LeastSquares}   & \textsf{LogitMLE}        & \textsf{PoisMLE} & \textsf{Proposed}        \\ \hline
\textsf{Logit}      & $3.77\pm.967$          & \textbf{.525$\pm$.157} & --               & $.527\pm.157$          \\
\textsf{Cloglog}   & $3.13\pm .599$      & $.294\pm.080$ & --                       & \textbf{.271$\pm$.068} \\
\textsf{Poisson}   & $3.77\pm .967$      & --               & \textbf{.630$\pm$.124} & \textbf{.630$\pm$.124}  \\
\textsf{xSqrt} & $2.32\pm.692$ & --               & \textbf{1.12$\pm$.290}            & \textbf{1.12$\pm$.290}    \\
\textsf{Cubic}      & \textbf{1.15$\pm$.258} & --                       & --               & $1.74\pm.440$            \\
\textsf{Cubic+}     & $33.9\pm50.7$           & --                       & --               & \textbf{1.74$\pm$.439}   \\
\textsf{Piecewise}  & $.541\pm .031$       & --                       & --               & \textbf{.391$\pm$.157} \\
\textsf{Piecewise+} & $6.32\pm3.26$           & --                       & --               & \textbf{.330$\pm$.184}
\end{tabular}
\caption{Efficiency measure for each pair of model and estimator. We report the average $\pm$ standard deviation.}
\label{tab:efficiency}
\end{table}

\subsection{Real Data Applications}
We utilize two datasets from the UCI Machine Learning Repository \citep{Dua:2019} to illustrate the performance of the proposed estimator. The DARWIN dataset \citep{cilia2018experimental} comprises handwriting data from 174 participants, including both Alzheimer's disease patients and healthy individuals. The second dataset \citep{misc_parkinson's_disease_classification_470} features 753 attributes derived from the sustained phonation of the vowel sounds of patients, both with and without Alzheimer's disease. We employ a leave-one-out strategy for splitting each dataset. For each $n-1$ subset, we compute the regularized MLE of logistic regression alongside the proposed estimate derived from it. We then estimate the effective asymptotic variance, $\sigma^2_*/\mu_*^2$, for each estimator. The results, presented in Tables \ref{tab:real1}--\ref{tab:real2}, indicate that the proposed estimator consistently provides a more accurate estimation of the true coefficient vector compared to conventional logistic regression.

\begin{table}[]
\centering
\begin{tabular}{c|ccc}
               & $\lambda=1$   & $\lambda=5$   & $\lambda=10$  \\ \hline
\textsf{Logit} & 1.87$\pm$0.06 & 0.46$\pm$0.01 & 0.30$\pm$0.00 \\
proposed       & 0.61$\pm$0.01 & 0.25$\pm$0.00 & 0.18$\pm$0.00
\end{tabular}
\caption{Estimated effective asymptotic variance of the MLE of logistic regression and the proposed estimator for DARWIN data. We provide the average $\pm$ standard deviation by using leave-one-out split datasets.}
\label{tab:real1}
\end{table}

\begin{table}[]
\centering
\begin{tabular}{c|ccc}
               & $\lambda=1$   & $\lambda=5$   & $\lambda=10$  \\ \hline
\textsf{Logit} & 2.22$\pm$0.03 & 0.30$\pm$0.00 & 0.16$\pm$0.00 \\
proposed       & 0.15$\pm$0.00 & 0.06$\pm$0.00 & 0.05$\pm$0.00
\end{tabular}
\caption{Estimated effective asymptotic variance of the MLE of logistic regression and the proposed estimator for speech data. We provide the average $\pm$ standard deviation by using leave-one-out split datasets.}
\label{tab:real2}
\end{table}

\section{Proof Outline} \label{sec:proof_outline}

We outline the proofs for each theorem in Section \ref{sec:main_results}.

\subsection{Consistency of Link Estimation (Theorem \ref{thm:link-est})}
\label{sec:proof-link}
We provide an overview of the proof for Theorem \ref{thm:link-est}, which comprises two primary steps: (i) the asymptotic characteristics of the index estimator $W_i$ discussed in Section \ref{sec:index}, and (ii) demonstrating the consistency of the estimator $\hat{g}(\cdot)$ in Section \ref{sec:g}, related to $W_i$.

\subsubsection{Error of Index Estimator}
We consider the distributional approximation \eqref{eq:W} for the index estimator $W_i$, established through observable adjustments by \citet{bellec2022observable}. Theorems 4.3 and 4.4 in \citet{bellec2022observable} support the following proposition:
\begin{proposition}
\label{prop:initial}
    Under Assumptions \ref{asmp:high-dimension}-\ref{asmp:feature} and $\E[y_1^2]<\infty$, there exists $Z_i\sim\mathcal{N}(0,1)$ such that, for each $i\in[n_1]$, as $n_1\to\infty$,
    \begin{align}
    \label{eq:ols-bellec}
    \abs{\Tilde{\bbeta}^\top{\bX_i^{(1)}}-\tilde\gamma\rbr{y_i^{(1)}-\Tilde{\bbeta}^\top{\bX_i^{(1)}}} -  \tilde\mu\bbeta^\top{\bX_i^{(1)}}-\tilde\sigma Z_i}\pconv0.
    \end{align}
\end{proposition}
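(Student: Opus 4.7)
The plan is to reduce Proposition \ref{prop:initial} to the observable-adjustment framework of \citet{bellec2022observable}, in particular Theorems 4.3 and 4.4 there. The statement decomposes naturally into two layers: (a) an unobservable distributional approximation in terms of deterministic inferential parameters $(\mu_1,\sigma_1)$ depending on the data-generating law, and (b) the consistency of the explicit estimators $(\tilde\mu,\tilde\sigma)$ for these parameters. A short Slutsky-type combination then delivers the claim. Since the ridge estimator is a convex M-estimator with the quadratic loss $\frac{1}{2n_1}\|\by^{(1)}-\bX^{(1)}\bb\|^2+\frac{\lambda}{2}\|\bb\|^2$, it falls within the class to which Bellec's machinery applies.

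First I would invoke the distributional approximation. Under Assumptions \ref{asmp:high-dimension}--\ref{asmp:feature} and $\E[y_1^2]<\infty$, Theorem 4.3 of \citet{bellec2022observable} provides deterministic sequences $\mu_1,\sigma_1$ (characterized by a fixed-point system tied to the ridge problem and the law of $y_1\mid \bX_1$) and, on a suitably enlarged probability space, standard normal variables $Z_i$ such that, for each fixed $i\in[n_1]$,
\begin{align}
\bigl|\,\tilde\bbeta^\top\bX_i^{(1)}-\tilde\gamma\bigl(y_i^{(1)}-\tilde\bbeta^\top\bX_i^{(1)}\bigr)-\mu_1\bbeta^\top\bX_i^{(1)}-\sigma_1 Z_i\,\bigr|\pconv 0.
\end{align}
Here the quantity $\tilde\gamma=\kappa_1/(\tilde v+\lambda)$ is precisely the observable adjustment for ridge, and $\tilde v$ is the empirical trace functional that appears in the resolvent expansion of $(\bX^{(1)\top}\bX^{(1)}+n_1\lambda\bI_p)^{-1}$ via second-order Stein's identity; this is the form prescribed by \citet{bellec2022observable}.

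Next I would appeal to Theorem 4.4 of \citet{bellec2022observable}, which shows $\tilde\sigma\pconv\sigma_1$ and $\tilde\mu\pconv\mu_1$ as $n_1\to\infty$ under the regime \eqref{eq:high-dim}. The formula $\tilde\sigma^2 = n_1^{-1}\|\by^{(1)}-\bX^{(1)}\tilde\bbeta\|^2\big/\bigl((\tilde v+\lambda)^2/\kappa_1\bigr)$ realizes a de-biased residual-variance estimator, while the identity $\tilde\mu=|\|\tilde\bbeta\|^2-\tilde\sigma^2|^{1/2}$ exploits the Pythagorean-type decomposition of $\|\tilde\bbeta\|^2$ into asymptotic signal and noise components $\mu_1^2+\sigma_1^2$ that is characteristic of the proportional regime. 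With these in hand, writing
\begin{align}
\tilde\mu\,\bbeta^\top\bX_i^{(1)}+\tilde\sigma Z_i=\mu_1\bbeta^\top\bX_i^{(1)}+\sigma_1 Z_i+(\tilde\mu-\mu_1)\bbeta^\top\bX_i^{(1)}+(\tilde\sigma-\sigma_1)Z_i,
\end{align}
the final two terms are $o_\mathrm{p}(1)$ because $\bbeta^\top\bX_i^{(1)}\sim\mathcal{N}(0,1)=O_\mathrm{p}(1)$ (using $\bbeta^\top\bSigma\bbeta=1$) and $Z_i=O_\mathrm{p}(1)$. Substituting into the first display yields \eqref{eq:ols-bellec}.

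The main obstacle is not a new probabilistic estimate but a careful translation: one must verify that every regularity requirement of \citet{bellec2022observable}---proper convexity of the loss, moment control of $y_1$, design regularity via $\lambda_{\min}(\bSigma),\lambda_{\max}(\bSigma)$---is covered by Assumptions \ref{asmp:high-dimension}--\ref{asmp:feature} together with $\E[y_1^2]<\infty$, and that our explicit choices of $(\tilde\gamma,\tilde v,\tilde\mu,\tilde\sigma)$ match the observable-adjustment formulas specialized to ridge. The coupling statement ``there exists $Z_i\sim\mathcal{N}(0,1)$'' should be read in the standard sense that $Z_i$ is furnished by the conclusions of Theorem 4.3 of \citet{bellec2022observable} on an enlarged probability space; the marginal distribution of the approximant on the right-hand side of \eqref{eq:ols-bellec} is all that is used downstream in the proof of Theorem \ref{thm:link-est}.
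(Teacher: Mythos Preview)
Your proposal is correct and follows essentially the same route as the paper: invoke Theorem~4.3 of \citet{bellec2022observable} for the distributional approximation with theoretical inferential parameters, then Theorem~4.4 for the consistency of $(\tilde\mu,\tilde\sigma,\tilde\gamma)$, and finish with a Slutsky-type triangle-inequality argument. Two small bookkeeping points you would need to fill in when writing it out: the paper explicitly unwinds the proximal identity $\prox_{\gamma f}(a)=a-\gamma f'(\prox_{\gamma f}(a))$ with $f(t)=t^2/2$ to pass from the prox form in which Theorem~4.3 is actually stated to the residual form involving $y_i^{(1)}-\tilde\bbeta^\top\bX_i^{(1)}$; and the first-stage parameters $(\mu_1,\sigma_1,\gamma_1)$ in Bellec's framework are not deterministic fixed-point solutions but data-dependent functionals of $\tilde\bbeta$ (e.g.\ $\mu_1=\bbeta^\top\bSigma\tilde\bbeta$, $\gamma_1=\trace(\bSigma(\bX^{(1)\top}\bX^{(1)})^{-1})$), so that $\gamma_1$ also needs to be replaced by $\tilde\gamma$ alongside $(\mu_1,\sigma_1)$ in the final step.
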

The proposition asserts that each $\tilde{\bbeta}^\top{{\bX}_i^{(1)}}$ for $i \in [n_1]$ is approximately equal to the sum of the biased true index $\tilde{\mu}\bbeta^\top{{\bX}_i^{(1)}}$, a Gaussian error, and an additive bias term. Since $W_i$ has the form $\tilde{\mu}^{-1}(\tilde{\bbeta}^\top{{\bX}_i^{(1)}} - \tilde{\gamma}(y_i^{(1)} - \tilde{\bbeta}^\top{\bX}_i^{(1)}))$, its approximation error is asymptotically represented by the Gaussian term as shown in Equation \eqref{eq:W}.

\subsubsection{Error of Link Estimator}
Next, we prove the consistency of the link estimator $\hat{g}(\cdot)$ using the index estimator $W_i$. Suppose $W_i$ were exactly equivalent to $\bbeta^\top\bX_i^{(1)} + \mathcal{N}(0, \sigma_1^2/\mu_1^2) =: \Tilde{W}_i$. In this case, we could apply the classical result of nonparametric error-in-variables regression \citep{fan1993nonparametric} to demonstrate the uniform consistency of $\hat{g}(\cdot)$. However, this equivalence is only asymptotic as shown in \eqref{eq:ols-bellec}. Therefore, we establish that the error due to this asymptotic equivalence is negligibly small in the estimation of $\hat{g}(\cdot)$ to complete the proof.

Specifically, we take the following steps. 
First, we decompose the error of $\hat{g}(\cdot)$ into two terms.
In preparation, we define $\tilde{g}(\cdot)$ as a deconvolution estimator using a deconvolution kernel $\tilde{K}_n(x)=({2\pi})^{-1}\int_{-\infty}^\infty\exp(-\mathrm{i}tx){\phi_K(t)}/{\phi_{\varsigma}(t/h_n)}dt$ using the true inferential parameters as $\varsigma=\sigma_*/\mu_*$ (its precise definition is given in Section \ref{sec:deconv}). 
This estimator corresponds to the estimator for the error-in-variable setup developed by \cite{fan1993nonparametric}.
Then, from the effect of the monotonization operator, we obtain the following decomposition:
\begin{align}
    &\sup_{a\le x\le b}\abs{\hat{g}(x)-g(x)} \leq \sup_{a\le x\le b}\abs{\breve{g}(x)-{g}(x)} \leq \sup_{a\le x\le b}\abs{\breve{g}(x)-\tilde{g}(x)} + \sup_{a\le x\le b}\abs{\tilde{g}(x)-{g}(x)}. \label{ineq:decomp_g}
\end{align}
The second term $\sup_{a\le x\le b}\abs{\tilde{g}(x)-{g}(x)}$ in \eqref{ineq:decomp_g} is the estimation error by the deconvolution estimator $\tilde{g}(\cdot)$, which is proven to be $o_{\mathrm{p}}(1)$ according to the result of \cite{fan1993nonparametric}. 

On the other hand, the first term $\sup_{a\le x\le b}\abs{\breve{g}(x)-\tilde{g}(x)}$ in \eqref{ineq:decomp_g} represents how our pre-monotonized estimator $\breve{g}(\cdot)$ in \eqref{eq:deconv} approximates the  estimator $\Tilde{g}(\cdot)$.
Rigorously, we obtain
\begin{align}
    &\abs{\breve{g}(x)-\tilde{g}(x)} \\
    &\lesssim \underbrace{ \frac{1}{n_1h_n}\abs{\sum_{i=1}^{n_1}{K}_n\rbr{\frac{\tilde{W}_i-x}{h_n}}-{K}_n\rbr{\frac{{W}_i-x}{h_n}}}}_{=:T_1} + \underbrace{\frac{1}{n_1h_n}\abs{\sum_{i=1}^{n_1}{K}_n\rbr{\frac{{W}_i-x}{h_n}}-\tilde{K}_n\rbr{\frac{{W}_i-x}{h_n}}}}_{=:T_2},
\end{align}
where $\lesssim$ is an inequality up to some universal constant.
The first term $T_1$ describes the error by the index estimator ${W}_i$. 
We develop an upper bound on $T_1$ by using the result of Proposition \ref{prop:initial}.
The second term $T_2$ represents the discrepancy between the convolution kernels $K_n(\cdot)$ and $\tilde{K}_n(\cdot)$.
Note that $K_n(\cdot)$ depends on the estimator $\tilde\varsigma^2=\tilde\sigma^2/\tilde\mu^2$ of the inferential parameter, and $\tilde{K}_n(\cdot)$ depends on the true value of the inferential parameter $\varsigma=\sigma_*/\mu_*$.
We derive its upper bound by evaluating the error of the estimators $\tilde{K}_n(\cdot)$.

By integrating these results into \eqref{ineq:decomp_g}, we prove that the estimation error of $\hat{g}(\cdot)$ is $o_{\mathrm{p}}(1)$.

\subsection{Marginal Asymptotic Normality (Theorem \ref{thm:asyN})}
This section provides a proof sketch of Theorem \ref{thm:asyN}. We specifically present a general theorem that characterizes the asymptotic normality of each coordinate of the unregularized estimator in high-dimensional settings. This discussion extends the proof provided by \citet{zhao2022asymptotic} for logistic regression.

Consider the single-index model given by \eqref{eq:model} and an arbitrary loss function $\bar\ell: \R \times \mY \to \R$. We define an M-estimator $\bar\bbeta$, based on the loss function $\bar\ell(\cdot)$, as follows:
\begin{align}
\label{eq:ell-bar}
    \bar\bbeta\in\argmin_{\bb\in\R^p}\sum_{i=1}^n\bar\ell(\bb^\top\bX_i;y_i).
\end{align}
With this general setup, we establish the following statement:
\begin{theorem}
\label{thm:master-zsc}
    Suppose that Assumptions \ref{asmp:high-dimension} and \ref{asmp:feature} hold.  
    Also, suppose that the M-estimator $\bar\bbeta\in\R^p$ in \eqref{eq:ell-bar} is uniquely determined and there exists a constant $C>0$ such that $\Pr(\|\bar\bbeta\|<C)\ge1-o(1)$ holds. 
    With the true parameter $\bbeta\in\R^p$, define 
    \begin{align}
        \mu_{\bar\bbeta}=\frac{\bar\bbeta^\top\bSigma\bbeta}{\bbeta^\top\bSigma\bbeta},\quad\mathrm{and}\quad
        \sigma_{\bar\bbeta}^2=\|\bP_{\bSigma^{1/2}\bbeta}^\perp\bar\bbeta\|^2=\|\bar\bbeta-\mu_{\bar\bbeta}\bbeta\|^2, \label{def:mu_sigma}
    \end{align}
    where $\bP_{\bSigma^{1/2}\bbeta}^\perp=\bI_p-\bSigma^{1/2}\bbeta\bbeta^\top\bSigma^{1/2}/\bbeta^\top\bSigma\bbeta$.
    Then, for any coordinates $j\in[p]$ with $\sqrt{p}\tau_j\beta_j=O(1)$, we obtain
    \begin{align}
        T_j:=\frac{\sqrt{p}(\bar{\beta}_j-\mu_{\bar\bbeta}\beta_j)}{\sigma_{\bar\bbeta}/\tau_j}\dconv\mathcal{N}(0,1).
    \end{align}
    as $n,p\to\infty$ with $p/n\to\bar\kappa<1$.
\end{theorem}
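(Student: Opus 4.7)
The plan is to extend the leave-one-feature-out (LOFO) argument used by \citet{zhao2022asymptotic} for the logistic MLE to an arbitrary smooth convex M-estimator. The core idea is to isolate the Gaussian residual of the $j$-th feature after regressing on the others; this residual drives the marginal CLT, while the signal $\beta_j$ contributes the systematic bias $\mu_{\bar\bbeta}\beta_j$. Throughout, set $\psi(t;y):=\partial_t\bar\ell(t;y)$ and $\psi_i:=\psi(\bar\bbeta^\top\bX_i;y_i)$.

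I would proceed in three steps. First, KKT with Gaussian decomposition: from the score equations $\sum_i X_{ik}\psi_i=0$ for all $k\in\{1,\ldots,p\}$, together with the standard decomposition $X_{ij}=\bX_{i,-j}^\top\bm{a}_j+\eta_{ij}$ where $\bm{a}_j=\bSigma_{-j,-j}^{-1}\bSigma_{-j,j}$ and $\eta_{ij}\overset{\mathrm{iid}}{\sim}\mathcal{N}(0,\tau_j^2)$ are independent of $\bX_{\cdot,-j}$, the $k\ne j$ equations cancel the deterministic portion of the $j$-th equation, yielding the clean identity $\sum_{i=1}^n \eta_{ij}\psi_i=0$. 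Second, LOFO linearization: introduce the constrained minimizer $\bar\bbeta^{(j)}$ with $b_j=0$, and Taylor-expand $\psi_i$ around its LOFO value $\psi_i^{(j)}:=\psi((\bar\bbeta^{(j)})^\top\bX_i;y_i)$, using the identity $\bar\bbeta^\top\bX_i-(\bar\bbeta^{(j)})^\top\bX_i=\bar\beta_j\eta_{ij}+(\bar\beta_j\bm{a}_j+\bar\bbeta_{-j}-\bar\bbeta^{(j)}_{-j})^\top\bX_{i,-j}$. Substituting back into the clean identity produces a linear-in-$\bar\beta_j$ equation of the form $0\approx \sum_i\eta_{ij}\psi_i^{(j)}+\bar\beta_j\sum_i\eta_{ij}^2\,\psi'((\bar\bbeta^{(j)})^\top\bX_i;y_i)+R_n$, where $R_n$ collects cross terms and higher-order residuals. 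Third, bias extraction and CLT: because $y_i=g(\beta_j\eta_{ij}+\bbeta_{-j}^\top\bX_{i,-j}+\beta_j\bm{a}_j^\top\bX_{i,-j})+\text{noise}$, a first-order expansion of $\psi_i^{(j)}$ in $\beta_j\eta_{ij}$ splits $\sum_i\eta_{ij}\psi_i^{(j)}$ into a piece proportional to $\beta_j$, which — after dividing through by the denominator — produces exactly the bias $\mu_{\bar\bbeta}\beta_j$, and a remainder $\sum_i\eta_{ij}\tilde\psi_i^{(j)}$ whose factor $\tilde\psi_i^{(j)}$ is independent of $\eta_{ij}$. A conditional CLT for this remainder, combined with concentration of the denominator, gives $\sqrt{p}\tau_j(\bar\beta_j-\mu_{\bar\bbeta}\beta_j)/\sigma_{\bar\bbeta}\dconv\mathcal{N}(0,1)$ once the variance is matched to the definition in \eqref{def:mu_sigma}.

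The main technical obstacle is controlling $R_n$ and the LOFO displacement uniformly in $j$. I would establish $\|\bar\bbeta-\bar\bbeta^{(j)}\|=O_\mathrm{p}(1/\sqrt{p})$ from strong convexity implicit in the uniqueness and boundedness assumption on $\bar\bbeta$, via the quadratic lower bound $\sum_i\bar\ell((\bar\bbeta^{(j)})^\top\bX_i;y_i)-\sum_i\bar\ell(\bar\bbeta^\top\bX_i;y_i)\geq c_0\|\bar\bbeta-\bar\bbeta^{(j)}\|^2$. Smoothness of $\bar\ell$ and boundedness of $\bar\bbeta^\top\bX_i$ provide the uniform moment bounds on $\psi'$ and $\psi''$ needed to show the quadratic and higher-order contributions to $R_n$ are $o_\mathrm{p}(1/\sqrt{p})$. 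The regime $p/n\to\bar\kappa<1$ ensures the empirical Hessian is well-conditioned and that the denominator $n^{-1}\sum_i\eta_{ij}^2\psi'(\cdot)$ concentrates at a strictly positive limit; combining these with Slutsky's theorem then closes the argument.
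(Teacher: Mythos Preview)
Your approach is genuinely different from the paper's, and in fact harder than needed. The paper gives a short rotational-symmetry argument: after whitening via $\bSigma=\bL\bL^\top$, one works with $\hat\bmeta=\bL^\top\bar\bbeta$, $\bmeta=\bL^\top\bbeta$, and i.i.d.\ $\mathcal{N}(\bzero,\bI_p)$ features $\tilde\bX_i$. For any rotation $\bU$ fixing $\bmeta$, the loss is invariant under $\tilde\bX_i\mapsto\bU^\top\tilde\bX_i$, so $\bU\hat\bmeta\overset{\rm d}{=}\hat\bmeta$, which forces $\bP_{\bmeta}^\perp\hat\bmeta/\|\bP_{\bmeta}^\perp\hat\bmeta\|$ to be \emph{exactly} uniform on the unit sphere in $\bmeta^\perp$. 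Coordinate-wise normality then follows from the classical fact that marginals of $\mathrm{Unif}(\sqrt{p}\,\mathbb{S}^{p-1})$ are asymptotically standard normal. This uses no smoothness or convexity of $\bar\ell$ at all---only that the minimizer is well-defined and its law is rotation-equivariant---and it automatically delivers the \emph{random} normalizers $\mu_{\bar\bbeta},\sigma_{\bar\bbeta}$ in the statement, since dividing by $\|\bP_{\bmeta}^\perp\hat\bmeta\|=\sigma_{\bar\bbeta}$ is precisely what lands you on the unit sphere.

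Your LOFO route has two concrete gaps relative to the theorem as stated. First, it imposes regularity not in the hypotheses: twice differentiability of $\bar\ell$ and a uniform strong-convexity constant $c_0$, which does not follow from mere uniqueness and boundedness of $\bar\bbeta$. Second, LOFO linearization naturally produces \emph{deterministic} bias and variance (limits of concentrated denominators), not the data-dependent $\mu_{\bar\bbeta}=\bar\bbeta^\top\bSigma\bbeta$ and $\sigma_{\bar\bbeta}^2=\|\bar\bbeta-\mu_{\bar\bbeta}\bbeta\|^2$ in the statement; your claim that the bias-extraction step ``produces exactly the bias $\mu_{\bar\bbeta}\beta_j$'' is the weak link, because matching a LOFO-derived constant to the random $\mu_{\bar\bbeta}$ requires a separate concentration argument you have not outlined. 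The symmetry proof sidesteps all of this in a few lines.
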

This theorem establishes the marginal asymptotic normality for a broad class of estimators defined by the minimization of convex loss functions. Additionally, it demonstrates that the limiting distributional behavior of $\bar{\bbeta}$ is characterized by $\mu_{\bar{\bbeta}}$ and $\sigma_{\bar{\bbeta}}^2$ in the high-dimensional setting \eqref{eq:high-dim}. 
Intuitively, $\mu_{\bar\bbeta}$ is a scaled inner product of $\bar\bbeta$ and $\bbeta$, and $\sigma_{\bar\bbeta}^2$ denotes the magnitude of the orthogonal component of $\bar\bbeta$ to $\bbeta$. 

The rigorous proof in Section \ref{sec:proof-master} is conducted in the following steps:
\begin{enumerate}
    \item[(i)] Since we have $\bbeta^\top\bX_i=(\bSigma^{-1/2}\bX_i)^\top(\bSigma^{1/2}\bbeta)$, we achieve the replacements $\bX_i$ to $\bSigma^{-1/2}\bX_i\sim\mathcal{N}(\bzero,\bI_p)$, $\bbeta$ to $\bmeta=\bSigma^{1/2}\bbeta$, and $\bar\bbeta$ to $\hat\bmeta=\bSigma^{1/2}\bar\bbeta$. From the Cholesky factorization of $\bSigma$, we have
    \begin{align}
        T_j=\frac{\sqrt{p}(\bar{\beta}_j-\mu_{\bar\bbeta}\beta_j)}{\sigma_{\bar\bbeta}/\tau_j}
        =\frac{\sqrt{p}(\hat{\eta}_j-\mu_{\bar\bbeta}\eta_j)}{\sigma_{\bar\bbeta}}.
    \end{align}
    \item[(ii)] Considering the rotation $\bU$ around $\bmeta$ (i.e., $\bU\bmeta=\bmeta$ and $\bU\bU^\top=\bI_p$), several calculations give, for $\bT:=(T_1,\ldots,T_p)^\top/\sqrt{p}$,
    \begin{align}
        \bT=\frac{\bP_{\bmeta}^\perp\hat\bmeta}{\|\bP_{\bmeta}^\perp\hat\bmeta\|}
        \overset{\rm d}{=}\frac{\bU\bP_{\bmeta}^\perp\hat\bmeta}{\|\bP_{\bmeta}^\perp\hat\bmeta\|}.
    \end{align}
    This means that $\bT$ is uniformly distributed on the unit sphere in $\bmeta^\perp$ (See Figure \ref{fig:proof}).
    \item[(iii)] Drawing on the analogy to the asymptotic equivalence between the $p$-dimensional standard normal distribution and $\mathrm{Unif}(\sqrt{p}\mathbb{S}^{p-1})$, we obtain the asymptotic normality of $T_j$. 
\end{enumerate}

\begin{figure}[t!]
  \centering
\includegraphics[keepaspectratio, width=9cm]{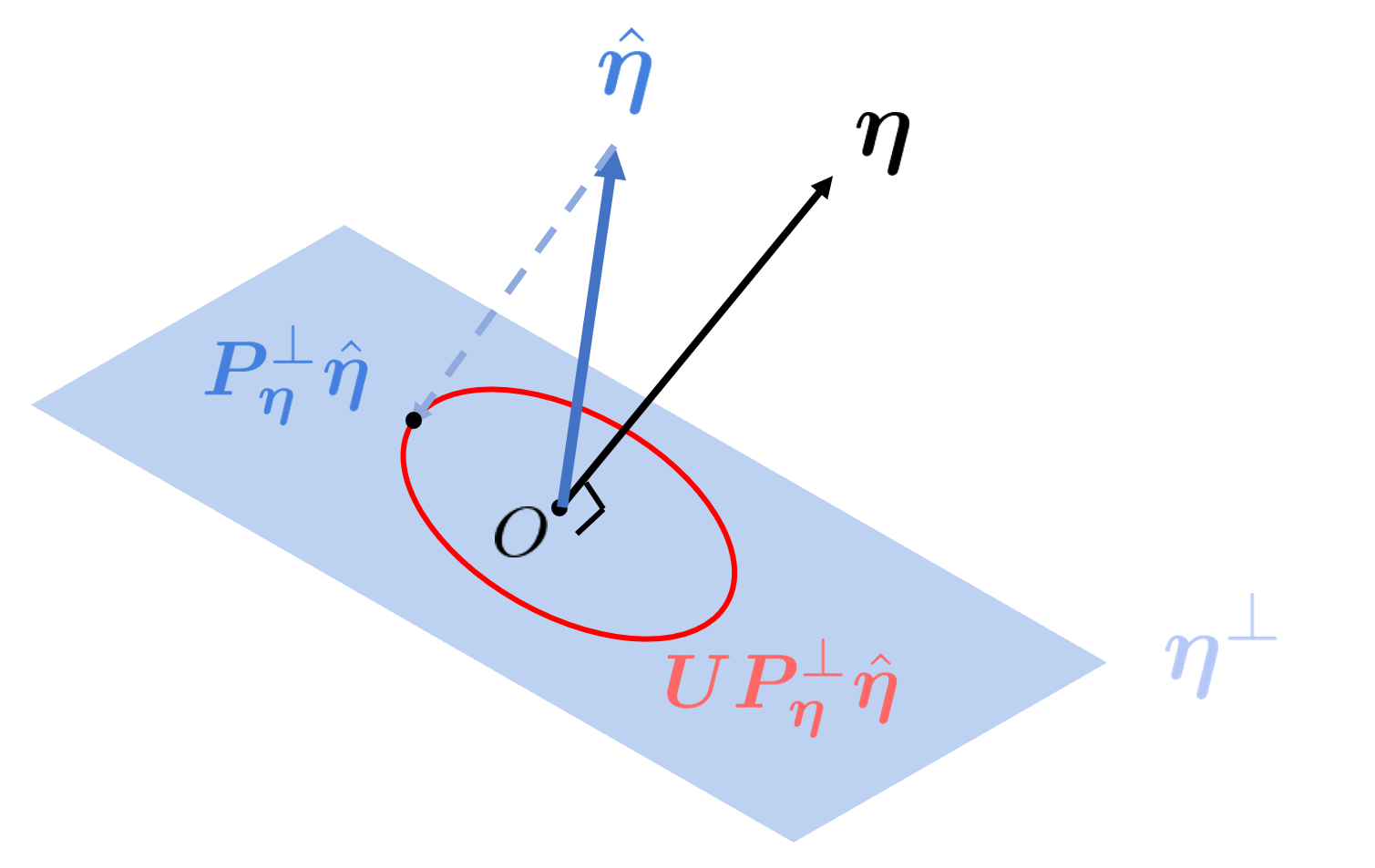}
  \caption{Illustration of the proof technique of Theorem \ref{thm:master-zsc}. $\mu_{\bar\bbeta}$ is the inner product of ($\bmeta, \hat\bmeta$). The radius of the set depicted by the red circle corresponds to $\sigma_{\bar\bbeta}$.}\label{fig:proof}
\end{figure}

We apply this general theorem to obtain Theorem \ref{thm:asyN}. 
A similar argument implies Theorem \ref{thm:asyN-ridge} for the regularized estimator.

\section{Other Design of Pilot Estimator} \label{sec:pilot}

We can consider alternative estimators as the pilot estimator discussed in Section \ref{sec:index}. Depending on the context, choosing an appropriate pilot estimator can enhance the asymptotic efficiency of the overall estimation process. Below, we list the various estimator options and their associated values necessary for estimating their inferential parameters.

\subsection{Least Squares Estimators}

In the case of $\kappa_1\le1$, we can use the least squares estimator 
\begin{align}
    \tilde\bbeta_\mathrm{LS}=({(\bX^{(1)}})^\top\bX^{(1)})^{-1}{(\bX^{(1)}})^\top\by.
\end{align}
In this case, there exist corresponding inferential parameters of $\tilde\bbeta_\mathrm{LS}$.

We obtain the following marginal asymptotic normality of the least-squares estimator.
We recall the definition of inferential parameters in \eqref{def:mu_sigma} and consider the corresponding parameter $\mu_{\Tilde{\bbeta}_{\mathrm{LS}}}$ and $\sigma_{\Tilde{\bbeta}_{\mathrm{LS}}}$ by substituting $\Tilde{\bbeta}_{\mathrm{LS}}$.
Then, we obtain the following result by a straightforward application of Theorem \ref{thm:master-zsc}.
\begin{corollary}
\label{cor:pilot-asyN}
    Under Assumptions \ref{asmp:high-dimension}-\ref{asmp:feature}, for any coordinates $j=1,\ldots,p$ obeying $\sqrt{p}\tau_j\beta_j=O(1)$, we have the following as $n,p\to\infty$ with the regime \eqref{eq:high-dim}:
    \begin{align}
        \frac{\sqrt{p}(\tilde\beta_{\mathrm{LS},j}-\mu_{{\Tilde{\bbeta}_{\mathrm{LS}}}}\beta_j)}{{\sigma_{\Tilde{\bbeta}_{\mathrm{LS}}}}/\tau_j}\dconv\mathcal{N}(0,1),
    \end{align}
\end{corollary}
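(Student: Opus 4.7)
The plan is to observe that the least squares estimator $\Tilde{\bbeta}_{\mathrm{LS}}$ is precisely the M-estimator in \eqref{eq:ell-bar} obtained by taking the quadratic surrogate $\bar{\ell}(t;y) = (y-t)^2/2$, since then
\begin{align}
\Tilde{\bbeta}_{\mathrm{LS}} = \argmin_{\bb \in \R^p} \sum_{i=1}^{n_1} \bar{\ell}(\bb^\top \bX_i^{(1)}; y_i^{(1)}) = ((\bX^{(1)})^\top \bX^{(1)})^{-1}(\bX^{(1)})^\top \by^{(1)}.
\end{align}
Hence, once the two structural hypotheses of Theorem \ref{thm:master-zsc} are verified, the asserted coordinate-wise asymptotic normality follows immediately, with $(\mu_{\Tilde{\bbeta}_{\mathrm{LS}}}, \sigma_{\Tilde{\bbeta}_{\mathrm{LS}}}^2)$ given by \eqref{def:mu_sigma} applied to $\bar{\bbeta} = \Tilde{\bbeta}_{\mathrm{LS}}$. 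So the bulk of the work is to check (a) that the minimizer is unique and (b) that $\|\Tilde{\bbeta}_{\mathrm{LS}}\|$ is tight.

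For (a), the hypothesis $\kappa_1 \leq 1$ of the corollary, strengthened by Assumption \ref{asmp:high-dimension} to $\kappa_1 \in [c_1, c_2]$ with $c_2 < 1$, together with the non-degenerate Gaussian design of Assumption \ref{asmp:feature}, implies that $(\bX^{(1)})^\top \bX^{(1)}$ is almost surely invertible; the quadratic loss is then strictly convex, yielding a unique minimizer. For (b), I write $\|\Tilde{\bbeta}_{\mathrm{LS}}\| \leq \lambda_{\min}(n_1^{-1}(\bX^{(1)})^\top\bX^{(1)})^{-1} \cdot \|n_1^{-1}(\bX^{(1)})^\top \by^{(1)}\|$ and bound the two factors separately. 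The first factor is $O_{\mathrm{p}}(1)$ by standard Gaussian random matrix bounds (e.g., Bai--Yin / Davidson--Szarek), using that $\bSigma$ has eigenvalues bounded away from $0$ and $\infty$ and that $\kappa_1$ is bounded away from $1$. For the second factor, one decomposes $n_1^{-1}(\bX^{(1)})^\top\by^{(1)} = n_1^{-1}\sum_i y_i^{(1)}\bX_i^{(1)}$, whose mean is $\E[y_1 \bX_1] = \E[g'(\bbeta^\top \bX_1)]\bSigma\bbeta = O(1)$ in norm by Stein's lemma and Assumption \ref{asmp:link}, and whose covariance has trace of order $p/n_1$ under the finite moment condition $\E[y_1^2] < \infty$ (implicit in the model, as used throughout the paper).

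With both conditions in hand, Theorem \ref{thm:master-zsc} applies with $\bar{\bbeta} = \Tilde{\bbeta}_{\mathrm{LS}}$ and $\mu_{\bar\bbeta}, \sigma_{\bar\bbeta}^2$ as in \eqref{def:mu_sigma}, yielding the marginal convergence of the studentized coordinates to $\mathcal{N}(0,1)$ for every $j$ with $\sqrt{p}\tau_j\beta_j = O(1)$. The main obstacle is the tightness in step (b): it is not entirely automatic from Assumptions \ref{asmp:high-dimension}--\ref{asmp:feature} alone, as it requires a moment control on the response (at least $\E[y_1^2] < \infty$) in order to dominate the stochastic fluctuations of $n_1^{-1}(\bX^{(1)})^\top\by^{(1)}$; this is a mild and implicit condition throughout the paper (cf.\ Proposition \ref{prop:initial} and Assumption \ref{asmp:y}), and under it the verification is routine. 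No additional work is needed for the limit itself, since Theorem \ref{thm:master-zsc} fully absorbs the remaining distributional analysis via the rotational-invariance argument sketched in Section \ref{sec:proof_outline}.
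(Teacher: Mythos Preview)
Your proposal is correct and matches the paper's approach: the paper states only that Corollary \ref{cor:pilot-asyN} follows by ``a straightforward application of Theorem \ref{thm:master-zsc},'' and you carry out exactly that application, supplying in addition the explicit verification of uniqueness and norm-tightness that the paper leaves implicit. Your observation that tightness of $\|\Tilde{\bbeta}_{\mathrm{LS}}\|$ tacitly requires $\E[y_1^2]<\infty$ (i.e., part of Assumption \ref{asmp:y}) rather than Assumptions \ref{asmp:high-dimension}--\ref{asmp:feature} alone is well taken and is indeed used elsewhere in the paper.
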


We also define the following values $(\Tilde{\gamma}_\mathrm{LS}, \tilde{\mu}_\mathrm{LS}, \tilde{\sigma}^2_\mathrm{LS})$ to estimate the inferential parameters $\mu_{\Tilde{\bbeta}_{\mathrm{LS}}}$ and $\sigma_{\Tilde{\bbeta}_{\mathrm{LS}}}$.
Namely, we define $\tilde\gamma_\mathrm{LS}={\kappa_1}/({1-\kappa_1})$, and
\begin{align}
        \tilde\mu_\mathrm{LS}=\abs{\frac{\|\bX^{(1)}\Tilde{\bbeta}_\mathrm{LS}\|^2}{n_1}-(1-\kappa_1)\tilde\sigma_\mathrm{LS}^2}^{1/2},\tilde\sigma_\mathrm{LS}^2=\frac{\tilde\gamma_\mathrm{LS}}{n_1(1-\kappa_1)}\|\by^{(1)}-\bX^{(1)}\Tilde{\bbeta}_\mathrm{LS}\|^2.
\end{align}
If we employ the least squares estimator $\Tilde{\bbeta}_{\mathrm{LS}}$ as the pilot estimator in Section \ref{sec:index}, we replace $(\tilde{\mu}, \tilde{\sigma}^2)$ for the index estimator $W_i$ in \eqref{def:W} by $(\tilde{\mu}_\mathrm{LS}, \tilde{\sigma}^2_\mathrm{LS})$.

\subsection{Maximum Likelihood Estimators}

When $y_i$ takes discrete values, a more appropriate pilot estimator can be proposed. 
For binary outcomes such as in classification problems where $y_i \in \{0,1\}$, we can employ MLE for logistic regression:
\begin{align}
    \tilde\bbeta_\mathrm{mle}\in\argmin_{\bb\in\R^p}\sum_{i=1}^{n_1}\log(1+\exp(\bb^\top{\bX_i^{(1)}}))-y_i\bb^\top{\bX_i^{(1)}}.
\end{align}
In the case with $y_i \in \N\cup\{0\}$, we can consider the MLE for the Poisson regression 
\begin{align}
    \tilde\bbeta_\mathrm{mle}\in\argmin_{\bb\in\R^p}\sum_{i=1}^{n_1}\exp(\bb^\top{\bX_i^{(1)}})-y_i\bb^\top{\bX_i^{(1)}}.
\end{align}
Its asymptotic normality is obtained by applying Theorem \ref{thm:master-zsc}.
\begin{corollary}
\label{cor:pilot-mle-asyN}
    Under Assumptions \ref{asmp:high-dimension}-\ref{asmp:feature}, for any coordinates $j=1,\ldots,p$ obeying $\sqrt{p}\tau_j\beta_j=O(1)$, we have the following as $n,p\to\infty$ with the regime \eqref{eq:high-dim}:
    \begin{align}
        \frac{\sqrt{p}(\tilde\beta_{\mathrm{mle},j}-\mu_{{\Tilde{\bbeta}_{\mathrm{mle}}}}\beta_j)}{{\sigma_{\Tilde{\bbeta}_{\mathrm{mle}}}}/\tau_j}\dconv\mathcal{N}(0,1),
    \end{align}
\end{corollary}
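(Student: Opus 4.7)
The plan is to obtain both assertions as a direct consequence of the master theorem, Theorem \ref{thm:master-zsc}. The logistic and the Poisson negative log-likelihoods both fit the generic form in \eqref{eq:ell-bar} with per-sample losses $\bar\ell(t;y) = \log(1+\exp(t)) - yt$ and $\bar\ell(t;y) = \exp(t) - yt$, respectively, so my task reduces to verifying the two prerequisites of that theorem for $\tilde\bbeta_\mathrm{mle}$: (i) uniqueness of the empirical-loss minimizer, and (ii) the tightness bound $\Pr(\|\tilde\bbeta_\mathrm{mle}\| < C) \ge 1 - o(1)$ for some constant $C > 0$.

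For uniqueness I would simply inspect the Hessians. Writing $\sigma(\cdot)$ for the logistic sigmoid, the Hessian of the logistic empirical loss at $\bb$ is $\sum_{i \in I_1} \sigma(\bb^\top \bX_i^{(1)})(1-\sigma(\bb^\top \bX_i^{(1)})) \bX_i^{(1)} (\bX_i^{(1)})^\top$, and the Poisson Hessian takes the analogous form with weight $\exp(\bb^\top \bX_i^{(1)})$. Both weights are strictly positive, so the Hessians are positive definite whenever $\bX^{(1)}$ has column rank $p$, which holds almost surely under Assumption \ref{asmp:feature} in the implicit $\bar\kappa<1$ regime of the master theorem. Strict convexity then yields a unique minimizer whenever one exists.

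The second ingredient, boundedness of $\|\tilde\bbeta_\mathrm{mle}\|$, is the substantive step. For the Poisson MLE I would use coercivity: along any direction in which $\|\bb\|$ grows, a constant fraction of samples contribute an exponentially large $\exp(\bb^\top \bX_i^{(1)})$ term to the loss, which, combined with Gaussian tail bounds on $\bX_i^{(1)}$ and sub-Poisson concentration of the linear term $y_i \bb^\top \bX_i^{(1)}$, drives the empirical loss to $+\infty$; this gives $\|\tilde\bbeta_\mathrm{mle}\| = O_\mathrm{p}(1)$ by the standard M-estimation argument used for GLMs in \citet{sawaya2023statistical}. For the logistic MLE, boundedness is equivalent to non-separability of the data, governed by the phase-transition curve in the $(\bar\kappa, \bbeta^\top\bSigma\bbeta)$-plane established in \citet{candes2020phase,sur2019likelihood,salehi2019impact}: below this curve the MLE exists with probability tending to one and $\|\tilde\bbeta_\mathrm{mle}\|$ concentrates on a finite deterministic constant, so the tightness bound follows.

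With both prerequisites in hand, Theorem \ref{thm:master-zsc} applies directly with $\bar\bbeta = \tilde\bbeta_\mathrm{mle}$ and $(\mu_{\bar\bbeta}, \sigma_{\bar\bbeta})$ defined as in \eqref{def:mu_sigma}, delivering the claimed coordinate-wise asymptotic normality. The hard part will be the logistic boundedness step: it is genuinely a restriction on the $(\bar\kappa, \bbeta^\top\bSigma\bbeta)$ pair, so a fully rigorous statement of the corollary must either assume that this pair lies in the MLE-existence region or invoke the existing phase-transition machinery. For the Poisson case, by contrast, the exponential loss makes boundedness essentially automatic.
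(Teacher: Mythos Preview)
Your approach is correct and matches the paper's: the corollary is stated as an immediate application of Theorem~\ref{thm:master-zsc}, with no further argument given beyond that one-line invocation. Your additional verification of the uniqueness and boundedness hypotheses (strict convexity of the Hessian, coercivity for Poisson, and the phase-transition existence region for logistic) fills in details the paper leaves implicit, and your observation that the logistic case genuinely requires the $(\bar\kappa,\bbeta^\top\bSigma\bbeta)$ pair to lie in the MLE-existence region is a restriction the paper also tacitly assumes but does not spell out.
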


In these cases, we can define values $(\tilde\gamma_\mathrm{mle}, \tilde\mu_\mathrm{mle}, \tilde\sigma_\mathrm{mle})$ for estimating their inferential parameters.
Define $g_0(x)=1/(1+\exp(-x))$ for logistic regression and $g_0(x)=\exp(x)$ for Poisson regression. 
Then, we define the values as $\tilde\gamma_\mathrm{mle}=\kappa_1\tilde{v}_\mathrm{mle}^{-1}$ and
\begin{align}
    \tilde\mu_\mathrm{mle}=\abs{\frac{\|\bX^{(1)}\tilde\bbeta_\mathrm{mle}\|^2}{n_1}-(1-\kappa_1)\tilde\sigma_\mathrm{mle}^2}^{1/2},~~~
    \tilde\sigma_\mathrm{mle}^2=\frac{\|\by^{(1)}-g_0(\bX^{(1)}\Tilde{\bbeta}_\mathrm{mle})\|^2}{n_1\tilde{v}_\mathrm{mle}/\kappa_1},
\end{align}
with $\Tilde{v}_\mathrm{mle}=n_1^{-1}\trace(\Tilde{\bD}-\Tilde{\bD}\bX^{(1)}((\bX^{(1)})^\top\Tilde{\bD}\bX^{(1)})^{-1}(\bX^{(1)})^\top\Tilde{\bD})$ and $\Tilde{\bD}=\diag(g_0'(\bX^{(1)}\tilde\bbeta_\mathrm{mle}))$.
Based on this definition, we can develop a corresponding index estimator by replacing $(\Tilde{\mu}, \tilde{\sigma})$ in \eqref{eq:W} by $\tilde\mu_\mathrm{mle}$ and $\tilde\sigma_\mathrm{mle}$.

\section{Conclusion and Discussion} \label{sec:discussion}
This study establishes a novel statistical inference procedure for high-dimensional single-index models. Specifically, we develop a consistent estimation method for the link function. Furthermore, using the estimated link function, we formulate an efficient estimator and confirm its marginal asymptotic normality. This verification allows for the accurate construction of confidence intervals and $p$-values for any finite collection of coordinates.

We identify several avenues for future research: (a) extending these results to cases where the covariate distribution is non-Gaussian, (b) generalizing our findings to multi-index models, and (c) confirming the marginal asymptotic normality of our proposed estimators under any form of regularization and covariance. These prospects offer intriguing possibilities for further exploration.

\appendix

\section{Effect of Link Estimation on Inferential Parameters} \label{sec:link_estimation_inferential}
The following theorem reveals that the estimation error of the link function is asymptotically negligible with respect to the observable adjustments.

Specifically, we consider a slightly modified version of the inferential estimator.
In preparation, we define a censoring operator $\iota: \R \to \R$ on a interval $[a,b] \subset \R$ as $\iota(z) = \max(a,\min(b,z))$.
Then, for any $\Bar{g}:\R \to \R$, we define a truncation version of $\bD$ as $\bD_c=\diag(\bar{g}'(\iota(\bX^{(2)}\hat{\bbeta}(\bar{g}))))$, and $\hat{v}_{0c} =n_2^{-1}\trace(\bD_c-\bD_c\bX^{(2)}(({\bX^{(2)}})^\top\bD_c\bX^{(2)})^{-1}({\bX^{(2)}})^\top\bD_c)$.
Further, in the case of $J(\cdot)  
\equiv\bzero$, we define the modified estimator as
\begin{align}
    \hat{\mu}_{0c}(\bar{g})&=\abs{{\|\iota(\bX^{(2)}\hat\bbeta(\bar{g}))\|^2}/n_2-(1-\kappa_2)\hat{\sigma}_{0c}^2(\bar{g})}^{1/2} ,\label{eq:mu-hat-tr} \mbox{~~and}\\
    \hat{\sigma}_{0c}^2(\bar{g})&=  \frac{\|\by^{(2)}-\bar{g}(\iota(\bX^{(2)}\hat\bbeta(\bar{g})))\|^2}{n_2 \hat{v}_{0c}^2/\kappa_2}.\label{eq:sigma-hat-tr}
\end{align}

Using the modified definition, we obtain the following result.
\begin{theorem}
\label{thm:adj-equiv}
Suppose that $J(\cdot)\equiv\bzero$ holds and the estimator \eqref{eq:estimator} exists. 
Further, suppose that Assumptions \ref{asmp:high-dimension}-\ref{asmp:y} hold.
Then, we have the following as $n_1 \to \infty$:
\begin{align}
\label{eq:mu-lip-g}
    \abs{\hat{\mu}_{0c}(\hat{g})-\hat{\mu}_{0c}(g)}\pconv0,\quad\mathrm{and}\quad\abs{\hat{\sigma}_{0c}^2(\hat{g})-\hat{\sigma}_{0c}^2(g)}\pconv0.
\end{align}
\end{theorem}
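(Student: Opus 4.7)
The plan is to decompose both differences $\hat{\mu}_{0c}(\hat{g})-\hat{\mu}_{0c}(g)$ and $\hat{\sigma}_{0c}^2(\hat{g})-\hat{\sigma}_{0c}^2(g)$ into pieces of two types: (i) differences of $\bar{g}$ or $\bar{g}'$ evaluated on the compact interval $[a,b]$, to which the censoring operator $\iota$ restricts all arguments, and (ii) differences arising from the sensitivity of $\hat{\bbeta}(\bar{g})$ as a function of the link $\bar{g}$. For (i), Theorem \ref{thm:link-est} gives $\sup_{x\in[a,b]}|\hat{g}(x)-g(x)|=o_\mathrm{p}(1)$, and an analogous uniform bound on $|\hat{g}'(x)-g'(x)|$ is needed to handle the factor $\bar{g}'$ appearing in $\bD_c$ and $\hat{v}_{0c}$; this should follow by differentiating the deconvolution kernel in the proof of Theorem \ref{thm:link-est} and redoing the Fourier estimates at a slightly worse logarithmic rate. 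For (ii), the key reduction is the stability bound $\|\hat{\bbeta}(\hat{g})-\hat{\bbeta}(g)\|=o_\mathrm{p}(1)$.

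For the stability bound, I would subtract the first-order conditions for $\hat{\bbeta}(\hat{g})$ and $\hat{\bbeta}(g)$ and add and subtract $\sum_{i=1}^{n_2}g(\bX_i^{(2)\top}\hat{\bbeta}(\hat{g}))\bX_i^{(2)}$ to obtain
\begin{align}
\sum_{i=1}^{n_2}\bigl[g(\bX_i^{(2)\top}\hat{\bbeta}(\hat{g}))-g(\bX_i^{(2)\top}\hat{\bbeta}(g))\bigr]\bX_i^{(2)}=\sum_{i=1}^{n_2}\bigl[g-\hat{g}\bigr](\bX_i^{(2)\top}\hat{\bbeta}(\hat{g}))\bX_i^{(2)}.
\end{align}
The left side equals $(\bX^{(2)\top}\bD_*\bX^{(2)})(\hat{\bbeta}(\hat{g})-\hat{\bbeta}(g))$ by the mean value theorem, with $\bD_*$ diagonal and entries at least $c_g^{-1}$ by Assumption \ref{asmp:link}. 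Standard Gaussian matrix concentration (valid under $\kappa_2<1$, implied by the assumed existence of the unregularized M-estimator) yields $\lambda_{\mathrm{min}}(\bX^{(2)\top}\bX^{(2)}/n_2)=\Theta(1)$ with high probability, whence $\|\hat{\bbeta}(\hat{g})-\hat{\bbeta}(g)\|\le Cn_2^{-1}\|\mathrm{RHS}\|$. The right-hand norm is controlled by splitting indices into those with $\bX_i^{(2)\top}\hat{\bbeta}(\hat{g})\in[a,b]$, where $\sup_{[a,b]}|\hat{g}-g|=o_\mathrm{p}(1)$ applies directly, and a tail set handled by the sub-Gaussian concentration of $\bX_i^{(2)\top}\hat{\bbeta}(\hat{g})$ together with a suitable extension of $\hat{g}$ outside $[a,b]$.

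Given the stability bound and the uniform closeness of $\hat{g},\hat{g}'$ to $g,g'$ on $[a,b]$, both target differences follow from pointwise expansions. For instance, the numerator of $\hat{\sigma}_{0c}^2$ uses
\begin{align}
\bigl|\hat{g}(\iota(z_1))-g(\iota(z_2))\bigr|\le\sup_{x\in[a,b]}|\hat{g}(x)-g(x)|+B\,|z_1-z_2|,
\end{align}
with $B$ the derivative bound from Assumption \ref{asmp:link} and $\iota$ being $1$-Lipschitz; taking $z_1=\bX_i^{(2)\top}\hat{\bbeta}(\hat{g})$, $z_2=\bX_i^{(2)\top}\hat{\bbeta}(g)$ and summing yields $n_2^{-1}\|\hat{g}(\iota(\bX^{(2)}\hat{\bbeta}(\hat{g})))-g(\iota(\bX^{(2)}\hat{\bbeta}(g)))\|^2=o_\mathrm{p}(1)$. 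The term $\hat{v}_{0c}(\hat{g})-\hat{v}_{0c}(g)$ is handled by the parallel expansion for $\bar{g}'$ combined with matrix perturbation applied to the sandwich $\bD_c\bX^{(2)}(\bX^{(2)\top}\bD_c\bX^{(2)})^{-1}\bX^{(2)\top}\bD_c$, again using the well-conditioning of $\bX^{(2)\top}\bD_c\bX^{(2)}/n_2$ via $\kappa_2<1$ and $g'\ge c_g^{-1}$. Finally, $|\hat{\mu}_{0c}(\hat{g})-\hat{\mu}_{0c}(g)|\to0$ follows from the continuous mapping theorem applied to $\hat{\sigma}_{0c}^2$ and to $\|\iota(\bX^{(2)}\hat{\bbeta}(\bar{g}))\|^2/n_2$, the latter being continuous in $\bar{g}$ through $\hat{\bbeta}(\bar{g})$ thanks to the stability bound.

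The main obstacle is establishing the uniform consistency of $\hat{g}'$ on $[a,b]$, which is not an explicit consequence of Theorem \ref{thm:link-est}: the deconvolution analysis in Section \ref{sec:proof-link} must be upgraded to track derivatives, while simultaneously coping with the fact that the monotonization operator $\mathcal{R}[\cdot]$ need not preserve differentiability (one likely needs a smoothing variant of $\mathcal{R}$ or an argument specific to the rearrangement \eqref{def:rearrangement}). A secondary subtlety is controlling $\hat{g}$ for arguments falling outside $[a,b]$ in the KKT comparison; extending $\hat{g}$ by its endpoint values preserves monotonicity and should suffice to close the tail estimate once combined with tail bounds on $\bX_i^{(2)\top}\hat{\bbeta}(\hat{g})$.
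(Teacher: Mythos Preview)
Your overall decomposition matches the paper's: both reduce the claim to (a) a stability bound for $\hat\bbeta(\bar g)$ as a function of $\bar g$, obtained by subtracting first-order conditions; (b) uniform consistency of $\hat g'$ on $[a,b]$, which the paper supplies as Lemma~\ref{lem:deriv-unif} by redoing the Fourier estimates for the derivative of the deconvolution kernel; and (c) termwise expansions of the three building blocks $\|\iota(\bX^{(2)}\hat\bbeta)\|^2/n_2$, $\|\by^{(2)}-\bar g(\iota(\bX^{(2)}\hat\bbeta))\|^2/n_2$, and $\hat v_{0c}$.

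The substantive difference is in (a). You target the norm bound $\|\hat\bbeta(\hat g)-\hat\bbeta(g)\|=o_\mathrm{p}(1)$ by inverting $\bX^{(2)\top}\bD_*\bX^{(2)}$ and controlling $\|\bX^{(2)\top}r\|$ via $\|\bX^{(2)}\|_{\mathrm{op}}$. The paper instead proves the \emph{stronger} conclusion $\max_i|\bX_i^{(2)\top}(\hat\bbeta(\hat g)-\hat\bbeta(g))|=o_\mathrm{p}(1)$ (Lemma~\ref{lem:lip-xb}) through a leave-one-out/martingale construction and the Azuma--Hoeffding inequality; this is precisely the step the paper flags as unreachable ``by the triangle inequality or H\"older's inequality.'' The payoff of the $\max_i$ bound is that the paper can control $\|\bD_c(g)-\bD_c(\hat g)\|_{\mathrm{op}}$ directly in the perturbation of $\hat v_{0c}$. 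Your weaker bound can in principle be made to suffice, but only if you run the matrix perturbation entirely in Frobenius norm: for the middle term $n_2^{-1}|\trace(M_1\Delta M_2)|$ with $M_2=\bX(\bX^\top\bD_c\bX)^{-1}\bX^\top\bD_c$, you would need row-norm control of $M_2$ (available from the projection identity $\bD_c^{1/2}M_2\bD_c^{-1/2}=P$ and the two-sided bounds on $g'$) to convert $\|\Delta M_2\|_F$ into $n_2^{-1/2}\|\Delta\|_F$. This works, but it is more than the one-line ``matrix perturbation'' your sketch currently offers; if you attempted it with operator norms you would be stuck without the $\max_i$ bound. Your concern that monotonization may destroy differentiability of $\hat g$ is legitimate and is not addressed by the paper either---Lemma~\ref{lem:deriv-unif} is proved for the pre-monotonized deconvolution estimator. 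Finally, your ``secondary subtlety'' about uncensored arguments in the KKT comparison is shared by the paper, which dispatches it with a brief independence remark rather than a full tail estimate.
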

This result indicates that, since the link estimator $\hat{g}(\cdot)$ is consistent, we can estimate the inferential parameters under the true link $g(\cdot)$.

The difficulty in this proof arises from the dependence between the elements of the estimator, which cannot be handled by the triangle inequality or H\"{o}lder's inequality, 
To overcome the difficulty, we utilize the Azuma-Hoeffding inequality for martingale difference sequences.

\section{Theoretical Efficiency Comparison}

We compare the efficiency of our estimator $\hat\bbeta(\hat{g})$ with that of the ridge estimator $\tilde{\bbeta}$ as the pilot.
As shown in \citet{bellec2022observable}, the ridge estimator is a valid estimator for the single-index model in the high-dimensional scheme \eqref{eq:high-dim} even without estimating the link function $g(\cdot)$. 

To the aim, we define the \textit{effective asymptotic variance} based on inferential parameters, which is a ratio of the asymptotic bias and the asymptotic variance. That is, our estimator $\hat{\bbeta}(\hat{g})$ has its effective asymptotic variance $\hat\sigma^2(\hat{g})/\hat\mu^2(\hat{g})$, and the ridge estimator $\tilde{\bbeta}$ has $\tilde\sigma^2/\tilde\mu^2$.
The effective asymptotic variance corresponds to the asymptotic variance of each coordinate of the estimators with bias correction.

We give the following result for necessary and sufficient conditions for the proposed estimator to be more efficient than the least squares estimator and the ridge estimator. 

\begin{proposition}
\label{prop:efficiency}
    We consider the coefficient estimator $\hat{\bbeta}(\hat{g})$ with $J(\bb)\equiv\lambda\|\bb\|^2$ and the setup $n_1=n_2$.
    We use the regularization parameter $\lambda_1>0$ for the pilot estimator $\tilde{\bbeta}$.
    Suppose that Assumptions \ref{asmp:high-dimension}-\ref{asmp:link} are fulfilled.
    Then, $\hat\sigma^2(\hat{g})/\hat\mu^2(\hat{g})< \tilde\sigma^2/\tilde\mu^2$ holds if and only if we have
    \begin{align}
        \frac{\|\hat\bbeta(\hat{g})\|}{\|\tilde\bbeta\|}\cdot
        \frac{\abs{\hat{v}_\lambda+\lambda}}{\abs{\Tilde{v}+\lambda_1}}\cdot
        \frac{\|\by^{(1)}-\bX^{(1)}\tilde\bbeta\|}{\|\by^{(2)}-\hat{g}(\bX^{(2)}\hat\bbeta(\hat{g}))\|}>1.
    \end{align}   
\end{proposition}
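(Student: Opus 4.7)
The plan is to reduce the efficiency inequality $\hat\sigma^2(\hat g)/\hat\mu^2(\hat g)<\tilde\sigma^2/\tilde\mu^2$ to a purely algebraic condition by exploiting the defining identities of the observable-adjustment estimators, and then substituting the closed-form expressions for $\tilde\sigma$ and $\hat\sigma(\hat g)$. Since both biconditional directions follow from the same chain of equivalences, essentially no probabilistic argument is needed beyond ensuring non-degeneracy; Assumptions \ref{asmp:high-dimension}--\ref{asmp:link} enter only through this non-degeneracy.

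First, I would resolve the absolute values in the definitions $\tilde\mu=|\|\tilde\bbeta\|^2-\tilde\sigma^2|^{1/2}$ and $\hat\mu(\hat g)=|\|\hat\bbeta(\hat g)\|^2-\hat\sigma^2(\hat g)|^{1/2}$. On the event where the quantities inside the absolute values are positive, one obtains the identities $\tilde\mu^2=\|\tilde\bbeta\|^2-\tilde\sigma^2$ and $\hat\mu^2(\hat g)=\|\hat\bbeta(\hat g)\|^2-\hat\sigma^2(\hat g)$. Substituting these into the target inequality, cross-multiplying, and cancelling the common term $\tilde\sigma^2\hat\sigma^2(\hat g)$ gives
\begin{align}
\frac{\hat\sigma^2(\hat g)}{\hat\mu^2(\hat g)}<\frac{\tilde\sigma^2}{\tilde\mu^2}\iff \hat\sigma^2(\hat g)\,\|\tilde\bbeta\|^2<\tilde\sigma^2\,\|\hat\bbeta(\hat g)\|^2,
\end{align}
which, upon taking square roots, is equivalent to $\|\hat\bbeta(\hat g)\|\,\tilde\sigma/(\|\tilde\bbeta\|\,\hat\sigma(\hat g))>1$.

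Second, I would substitute the explicit forms
\begin{align}
\tilde\sigma=\frac{\|\by^{(1)}-\bX^{(1)}\tilde\bbeta\|\sqrt{\kappa_1}}{\sqrt{n_1}\,|\tilde v+\lambda_1|},\qquad
\hat\sigma(\hat g)=\frac{\|\by^{(2)}-\hat g(\bX^{(2)}\hat\bbeta(\hat g))\|\sqrt{\kappa_2}}{\sqrt{n_2}\,|\hat v_\lambda+\lambda|}.
\end{align}
Forming the ratio $\tilde\sigma/\hat\sigma(\hat g)$ and using the assumption $n_1=n_2$ (hence $\kappa_1=\kappa_2$), the $\sqrt{n_\cdot}$ and $\sqrt{\kappa_\cdot}$ factors cancel, leaving exactly $[\|\by^{(1)}-\bX^{(1)}\tilde\bbeta\|/\|\by^{(2)}-\hat g(\bX^{(2)}\hat\bbeta(\hat g))\|]\cdot[|\hat v_\lambda+\lambda|/|\tilde v+\lambda_1|]$. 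Multiplying by $\|\hat\bbeta(\hat g)\|/\|\tilde\bbeta\|$ reproduces the claimed expression, closing the chain of equivalences in both directions.

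The only subtle point, and thus the main obstacle to a fully rigorous statement, is justifying the removal of the absolute values in Step 1: a priori one could have $\|\hat\bbeta(\hat g)\|^2<\hat\sigma^2(\hat g)$ on some event, in which case the cross-multiplication changes sign. The cleanest remedy is to note that the population biases $\mu_1$ and $\mu_*$ appearing in the asymptotic normality statements \eqref{eq:asyN} and Theorem \ref{thm:asyN-ridge} are strictly positive under the identification $\bbeta^\top\bSigma\bbeta=1$ and the monotonicity in Assumption \ref{asmp:link}; combined with the consistency of $\tilde\mu$ and $\hat\mu(\hat g)$ for these biases established in Proposition \ref{prop:initial} and in the proof of Theorem \ref{thm:asyN-ridge}, both $\|\tilde\bbeta\|^2-\tilde\sigma^2$ and $\|\hat\bbeta(\hat g)\|^2-\hat\sigma^2(\hat g)$ are positive with probability tending to one, so the equivalence is exact on this event. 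Apart from this observation, the proof is a direct sequence of algebraic substitutions.
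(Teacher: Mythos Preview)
Your proposal is correct and follows essentially the same approach as the paper: both exploit the identity $\mu^2=\|\hat\bbeta\|^2-\sigma^2$ (and its pilot analogue) to reduce the efficiency comparison to $\hat\sigma^2(\hat g)\|\tilde\bbeta\|^2<\tilde\sigma^2\|\hat\bbeta(\hat g)\|^2$, then substitute the explicit ridge formulas for $\tilde\sigma$ and $\hat\sigma(\hat g)$ and use $n_1=n_2$ to cancel the $\kappa/n$ factors. Your treatment of the absolute-value sign is in fact more explicit than the paper's, which simply writes $\tilde\mu^2=\|\tilde\bbeta\|^2-\tilde\sigma^2$ without comment.
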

This necessary and sufficient condition suggests that our estimator may have an advantage by exploiting the nonlinearity of the link function $g(\cdot)$. 
The first reason is that, when $\by$ has nonlinearity in $\bX \bbeta$, the residual $\|\by-\hat{g}(\bX\hat\bbeta)\|^2$ of the proposed method is expected to be asymptotically smaller than $\|\by-\bX\tilde\bbeta\|^2$. 
The second reason is that $\hat{v}_\lambda$ approximates the gradient mean $ n^{-1}\sum_{i=1}^n\hat{g}'(\bX_i^\top\hat\bbeta(\hat{g}))$, so this element increases when $g(\cdot)$ has a large gradient. Using these facsts, the proposed method incorporates the nonlinearity of $g(\cdot)$ and helps improve efficiency.

\begin{proposition} \label{prop:efficiency_additional}
If $J(\cdot)\equiv\bzero$, $n_1=n_2$, and  are fulfilled, then $\hat\sigma_0^2(\hat{g})/\hat\mu_0^2(\hat{g})<\tilde\sigma_\mathrm{LS}^2/\tilde\mu_\mathrm{LS}^2$ if and only if
    \begin{align}
        {\frac{\|\bX^{(2)}\hat\bbeta(\hat{g})\|}{\|\bX^{(1)}\tilde\bbeta_\mathrm{LS}\|}
        \cdot\frac{\abs{\hat{v}_0}}{1-\kappa_1}
        \cdot\frac{\|\by^{(1)}-\bX^{(1)}\tilde\bbeta_\mathrm{LS}\|}{\|\by^{(2)}-\hat{g}(\bX^{(2)}\hat\bbeta(\hat{g}))\|}}>1.
    \end{align}
\end{proposition}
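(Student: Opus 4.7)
The statement is purely algebraic, so the plan is to invert the defining relations of the two pairs $(\hat{\mu}_0(\hat{g}),\hat{\sigma}_0(\hat{g}))$ and $(\tilde{\mu}_{\mathrm{LS}},\tilde{\sigma}_{\mathrm{LS}})$ and reduce the target inequality to a single ratio that can be simplified using $n_1=n_2$. First I would use the identity
\begin{align}
\frac{\hat{\mu}_0^2(\hat{g})}{\hat{\sigma}_0^2(\hat{g})}+(1-\kappa_2)=\frac{\|\bX^{(2)}\hat{\bbeta}(\hat{g})\|^2/n_2}{\hat{\sigma}_0^2(\hat{g})},
\end{align}
which follows directly from squaring $\hat{\mu}_0(\hat{g})=|\|\bX^{(2)}\hat{\bbeta}(\hat{g})\|^2/n_2-(1-\kappa_2)\hat{\sigma}_0^2(\hat{g})|^{1/2}$ (after verifying the sign of the quantity inside the absolute value, which is positive under the standing assumption that $\hat{\mu}_0(\hat{g})$ is a valid estimator of a nonzero bias parameter). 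An analogous identity holds for the pilot pair $(\tilde{\mu}_{\mathrm{LS}},\tilde{\sigma}_{\mathrm{LS}})$ after squaring its definition.

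Second, since $1-\kappa$ is the same constant on both sides (using $n_1=n_2$ so that $\kappa_1=\kappa_2=:\kappa$), the inequality $\hat{\sigma}_0^2(\hat{g})/\hat{\mu}_0^2(\hat{g})<\tilde{\sigma}_{\mathrm{LS}}^2/\tilde{\mu}_{\mathrm{LS}}^2$ is equivalent (by taking reciprocals and adding $1-\kappa$ to both sides) to
\begin{align}
\frac{\|\bX^{(2)}\hat{\bbeta}(\hat{g})\|^2/n_2}{\hat{\sigma}_0^2(\hat{g})}>\frac{\|\bX^{(1)}\tilde{\bbeta}_{\mathrm{LS}}\|^2/n_1}{\tilde{\sigma}_{\mathrm{LS}}^2}.
\end{align}
Rearranging gives
\begin{align}
\frac{\|\bX^{(2)}\hat{\bbeta}(\hat{g})\|^2}{\|\bX^{(1)}\tilde{\bbeta}_{\mathrm{LS}}\|^2}\cdot\frac{\tilde{\sigma}_{\mathrm{LS}}^2}{\hat{\sigma}_0^2(\hat{g})}>1,
\end{align}
using $n_1=n_2$ once more.

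Third, I would plug in the explicit formulas $\tilde{\sigma}_{\mathrm{LS}}^2=\kappa\|\by^{(1)}-\bX^{(1)}\tilde{\bbeta}_{\mathrm{LS}}\|^2/(n_1(1-\kappa)^2)$ (using $\tilde{\gamma}_{\mathrm{LS}}=\kappa/(1-\kappa)$) and $\hat{\sigma}_0^2(\hat{g})=\|\by^{(2)}-\hat{g}(\bX^{(2)}\hat{\bbeta}(\hat{g}))\|^2/(n_2\hat{v}_0^2/\kappa)$. The factors of $\kappa$ and $n_1=n_2$ cancel, leaving
\begin{align}
\frac{\tilde{\sigma}_{\mathrm{LS}}^2}{\hat{\sigma}_0^2(\hat{g})}=\frac{\hat{v}_0^2}{(1-\kappa)^2}\cdot\frac{\|\by^{(1)}-\bX^{(1)}\tilde{\bbeta}_{\mathrm{LS}}\|^2}{\|\by^{(2)}-\hat{g}(\bX^{(2)}\hat{\bbeta}(\hat{g}))\|^2}.
\end{align}
Substituting this into the preceding display and taking square roots (valid since $1-\kappa>0$ in the ridgeless regime $p<n$) yields exactly the stated condition.

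\paragraph{Main obstacle.} The algebra itself is elementary; the only subtlety is bookkeeping the absolute values and sign conditions in the definitions of $\hat{\mu}_0(\hat{g})$ and $\tilde{\mu}_{\mathrm{LS}}$. Specifically, the equivalence via reciprocation and addition of $1-\kappa$ requires that both $\|\bX^{(2)}\hat{\bbeta}(\hat{g})\|^2/n_2-(1-\kappa)\hat{\sigma}_0^2(\hat{g})$ and $\|\bX^{(1)}\tilde{\bbeta}_{\mathrm{LS}}\|^2/n_1-(1-\kappa)\tilde{\sigma}_{\mathrm{LS}}^2$ be positive, which holds under the implicit regime $\kappa<1$ where both pilot and proposed estimators have well-defined bias surrogates; this should be flagged explicitly. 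Everything else is a direct computation parallel to the proof of Proposition \ref{prop:efficiency}, just with the ridgeless substitutions $\lambda=0$, $\lambda_1=0$, and $\tilde{v}=1-\kappa$ (which is the least-squares value of the trace functional underlying the observable adjustment).
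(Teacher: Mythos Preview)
Your proposal is correct and follows essentially the same approach as the paper: both compute $\hat\mu_0^2/\hat\sigma_0^2$ and $\tilde\mu_{\mathrm{LS}}^2/\tilde\sigma_{\mathrm{LS}}^2$ by dividing the defining relations through by the respective $\sigma^2$'s, use $\kappa_1=\kappa_2$ to cancel the common $(1-\kappa)$ shift, substitute the explicit residual-based formulas for the two variances, and take square roots. Your additional remark about the sign condition inside the absolute values is a point the paper leaves implicit.
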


\section{Nonparametric Regression with Deconvolution}
\label{sec:deconv}
In this section, we review the concept of nonparametric regression with deconvolution to address the errors-in-variable problem. To begin with, we redefine the notation only for this section. For a pair of random variables 
$(X,Y,Z)$, suppose that the model is
\begin{align}
    \E[Z\mid X=x]=m(x),
\end{align}
and that we can only observe $n$ iid realizations of $Y=X+\varepsilon$ and $Z$. Here, $\varepsilon$ is a random variable called measurement error or error in variables. For the identification, we assume that the distribution of $\varepsilon$ is known. 
Let the joint distribution of $(X,Z)$ be $f(x,z)$. By the definition of the conditional expectations, $m(x)={r(x)}/{f(x)}$ with
\begin{align}
    r(x)=\int_{-\infty}^\infty zf(x,z)dz,\quad f(x)=\int_{-\infty}^\infty f(x,z)dz,
\end{align}
for the continuous random variables. The goal of the problem is to estimate the function $m(\cdot)$.

If we could observe $X$, a popular estimator of $m(x)$ is Nadaraya-Watson estimator ${\tilde{r}(x)}/{\tilde{f}(x)}$ with
\begin{align}
    \tilde{r}(x)=\frac{1}{nh_n}\sum_{i=1}^nZ_iK\rbr{\frac{x-X_i}{h_n}},\quad \tilde{f}(x)=\frac{1}{nh_n}\sum_{i=1}^nK\rbr{\frac{x-X_i}{h_n}},
\end{align}
where $K(\cdot)$ is a kernel function and $h_n$ is the bandwidth. Since $X$ is unobservable, we alternatively construct the \textit{deconvolution} estimator \citep{stefanski1990deconvolving}. Let the characteristic function of $X$, $Y$ and $\varepsilon$ be 
$\phi_X(\cdot)$, $\phi_Y(\cdot)$ and $\phi_\varepsilon(\cdot)$, respectively. Since the density of $Y$ is the convolution of that of $X$ and $\varepsilon$, and the convolution in the frequency domain is just a multiplication, we have $\phi_X(t)=\phi_Y(t)/\phi_\varepsilon(t)$. Thus, the inverse Fourier transform of $\phi_Y(t)/\phi_\varepsilon(t)$ gives the density of $X$. Since we know the distribution of $\varepsilon$ and we can approximate $\phi_Y(t)$ by the characteristic function of the kernel density estimator of $Y$, we can construct an estimator of $f(x)$ as
\begin{align}
\label{eq:decnv-kde}
    \hat{f}(x)=\frac{1}{2\pi}\int_{-\infty}^\infty\exp(-\mathrm{i}tx)\phi_K(th_n)\frac{\hat\phi_Y(t)}{\phi_\varepsilon(t)}dt,
\end{align}
where we use the fact that the Fourier transform of $\tilde{f}_Y(y)=(nh_n)^{-1}\sum_{i=1}^nK((y-Y_i)/h_n)$ is $\phi_K(th_n)\hat{\phi}_Y(t)$, which approximates $\phi_Y(\cdot)$.
Here, $\hat\phi_Y(t)$ is the empirical characteristic function:
\begin{align}
    \hat\phi_Y(t)=\frac{1}{n}\sum_{i=1}^n\exp(\mathrm{i}tY_i).
\end{align}
We can rewrite \eqref{eq:decnv-kde} in a kernel form
\begin{align}
    \hat{f}(x)=\frac{1}{nh_n}\sum_{i=1}^nK_n\rbr{\frac{x-Y_i}{h_n}},
\end{align}
with
\begin{align}
    K_n(x)=\frac{1}{2\pi}\int_{-\infty}^\infty\exp(-\mathrm{i}tx)\frac{\phi_K(t)}{\phi_\varepsilon(t/h_n)}dt.
\end{align}
Using this, \citet{fan1993nonparametric} proposes a kernel regression estimator $\hat{m}(x)=\hat{r}(x)/\hat{f}(x)$ involving errors in variables with
\begin{align}
\hat{r}(x)=\frac{1}{nh_n}\sum_{i=1}^nZ_iK_n\rbr{\frac{x-Y_i}{h_n}}.
\end{align}
To establish the theoretical guarantee, we impose the following assumptions:
\begin{enumerate}
    \item[(N1)] (Super-smoothness of the distribution of $\varepsilon$) There exists constants $d_0,d_1,\beta,\gamma>0$ and $\beta_0,\beta_1\in\R$ satisfying, as $t\to\infty$,
    \begin{align}
        d_0\abs{t}^{\beta_0}\exp(-\abs{t}^\beta/\gamma)
        \le\abs{\phi_\varepsilon(t)}
        \le d_1\abs{t}^{\beta_1}\exp(-\abs{t}^\beta/\gamma).
    \end{align}
    \item[(N2)] The characteristic function of the error distribution $\phi_\varepsilon(\cdot)$ does not vanish.
    \item[(N3)] Let $a<b$. The marginal density $f_X(\cdot)$ of the unobserved $X$ is bounded away from zero on the interval $[a,b]$, and has a bounded $k$-th derivative.
    \item[(N4)] The true regression function $m(\cdot)$ has a continuous $k$-th derivative on $[a,b]$.
    \item[(N5)] The conditional second moment $\E[Z^2\mid X=x]$ is continuous on $[a,b]$, and $\E[Z^2]<\infty$.
    \item[(N6)] The kernel $K(\cdot)$ is a $k$-th order kernel. Namely,
    \begin{align}
        \int_{-\infty}^\infty K(t)dt=1,\quad \int_{-\infty}^\infty t^kK(t)dt\neq0,\quad
        \int_{-\infty}^\infty t^jK(t)dt=0\quad\mathrm{for}\quad j=1,\ldots,k-1.
    \end{align}
\end{enumerate}
(N1) includes Gaussian distributions for $\beta=2$ and Cauchy distributions for $\beta=1$.
For a positive constant $B$, define a set of function
\begin{align}
    \mathcal{F}=\cbr{f(x,z):
    \abs{f_X^{(k)}(\cdot)}\le B,
    \min_{a\le x\le b}f_X(x)\ge B^{-1}, 
    \sup_{a\le x\le b}\abs{m^{(j)}(x)}\le B, j=0,1,\ldots,k}.
\end{align}
In this setting, we have the uniform consistency of $\hat{m}(\cdot)$ and its rate of convergence.
\begin{lemma}[Theorem 2 in \citet{fan1993nonparametric}]
\label{lem:fan1993}
Assume (N1)-(N6) and that $\phi_K(t)$ has a bounded support on $\abs{t}<M_0$. Then, for bandwidth $h_n=c(\log n)^{-1/\beta}$ with $c>M_0(2/\gamma)^{1/\beta}$,
\begin{align}
    \lim_{d\to\infty}\limsup_{n\to\infty}\Pr\rbr{\sup_{a\le x\le b}|\hat{m}(x)-m(x)|\ge d(\log n)^{-k/\beta}}=0,
\end{align}
holds for any $f\in\mathcal{F}$.
\end{lemma}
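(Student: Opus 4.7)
The plan is to reduce the supremum bound on $|\hat m(x)-m(x)|$ to uniform bounds on the numerator and denominator in the Nadaraya-Watson quotient, and then separately handle a deterministic bias term and a stochastic fluctuation term for each. Concretely, I would start from the decomposition
\begin{align}
\hat m(x) - m(x) = \frac{\hat r(x) - m(x)\hat f(x)}{\hat f(x)} = \frac{(\hat r(x)-r(x)) - m(x)(\hat f(x)-f(x))}{\hat f(x)},
\end{align}
and then further split each of $\hat r(x)-r(x)$ and $\hat f(x)-f(x)$ into $(\hat r(x)-\E\hat r(x)) + (\E\hat r(x)-r(x))$ and analogously for $\hat f$. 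If I can show that each centered piece is $o_\mathrm{p}((\log n)^{-k/\beta})$ uniformly on $[a,b]$ and that $\hat f(x)$ stays uniformly bounded away from zero with probability tending to one (which will follow once the centered piece for $\hat f$ is controlled and condition (N3) is invoked), the quotient bound follows.

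The bias terms are the easier half: writing $\E\hat f(x) = h_n^{-1}\int K_n((x-y)/h_n)f_Y(y)dy$ and using the Fourier inversion relation $\phi_{K_n(\cdot/h_n)/h_n}(t) = \phi_K(h_n t)/\phi_\varepsilon(t)\cdot \phi_\varepsilon(t)/\phi_\varepsilon(t) = \phi_K(h_n t)$ on the convolution side, the $k$-th order kernel assumption (N6) together with the smoothness in (N3)--(N4) yields a $O(h_n^k)$ bound uniformly on $[a,b]$ for both $\E\hat f(x)-f(x)$ and $\E\hat r(x)-r(x)$. With the prescribed bandwidth $h_n = c(\log n)^{-1/\beta}$, this is exactly of order $(\log n)^{-k/\beta}$.

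The genuinely hard step is the uniform control of the stochastic parts $\hat f(x)-\E\hat f(x)$ and $\hat r(x)-\E\hat r(x)$. Because $\phi_\varepsilon$ decays like $\exp(-|t|^\beta/\gamma)$ by (N1), the deconvolution kernel $K_n$ is very large: using the support restriction $|\phi_K(t)|\le \mone\{|t|\le M_0\}$ and the lower bound in (N1), one obtains
\begin{align}
\sup_{x\in\R}|K_n(x)| \;\lesssim\; \int_{-M_0}^{M_0} \frac{dt}{|\phi_\varepsilon(t/h_n)|} \;\lesssim\; h_n^{-\beta_0 \vee 0}\exp\!\rbr{\frac{M_0^\beta}{\gamma h_n^\beta}}.
\end{align}
The second-moment bound on each summand is therefore also controlled by this quantity (times $\E[Z^2]<\infty$ from (N5)). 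I would then apply a Bernstein- or Hoeffding-type inequality pointwise, followed by a chaining/covering argument over a grid of $[a,b]$ whose resolution is driven by the Lipschitz constant of $K_n$ (which is itself bounded by $h_n^{-1}$ times the above quantity). The resulting uniform deviation is of order
\begin{align}
\sqrt{\frac{\log n}{n h_n}}\exp\!\rbr{\frac{M_0^\beta}{\gamma h_n^\beta}}.
\end{align}
The constant $c$ in $h_n = c(\log n)^{-1/\beta}$ is chosen precisely to make this quantity $o((\log n)^{-k/\beta})$: the condition $c>M_0(2/\gamma)^{1/\beta}$ forces $M_0^\beta/(\gamma h_n^\beta)<\tfrac12 \log n$, so that the exponential factor is bounded by $n^{1/2-\delta}$ and the stochastic term vanishes faster than any polynomial in $\log n$.

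The main obstacle, as is well known in deconvolution, is precisely this balancing: the variance is exponentially large in $h_n^{-\beta}$ while the bias is polynomially small in $h_n$, and the uniform-in-$x$ control must be tight enough to avoid losing the logarithmic rate. Once the two uniform bounds are combined, inserting them into the quotient decomposition, together with the uniform lower bound on $\hat f(x)$ obtained from the corresponding bound on $f(x)$, yields $\sup_{a\le x\le b}|\hat m(x)-m(x)| = O_\mathrm{p}((\log n)^{-k/\beta})$, which is equivalent to the claimed limit statement.
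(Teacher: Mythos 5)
This lemma is not proved in the paper at all: it is imported verbatim as Theorem 2 of \citet{fan1993nonparametric}, so there is no in-paper argument to match. Your sketch is a reasonable reconstruction of the standard deconvolution proof, and its skeleton is sound: the quotient decomposition, the key identity $\E[\hat f(x)]=h_n^{-1}\E_X[K((x-X)/h_n)]$ (which the paper itself verifies in its appendix) giving a uniform bias of order $h_n^k=O((\log n)^{-k/\beta})$, and the observation that the bandwidth condition $c>M_0(2/\gamma)^{1/\beta}$ makes the exponential blow-up of $K_n$ of size at most $n^{1/2-\delta}$, so the stochastic term dies faster than any power of $\log n$. These are exactly the ingredients of the original Fan--Truong argument.

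The genuine gap is in your control of the stochastic parts. You propose Bernstein/Hoeffding plus chaining applied to the summands $Z_iK_n((x-Y_i)/h_n)$, but under (N5) the response $Z$ is only square-integrable, not bounded or sub-exponential, so a Bernstein-type inequality does not apply as stated; you would need a truncation of $Z$ and a separate treatment of the tail, and your sketch does not provide this. The cleaner route --- the one used in the source and mirrored by the paper in its own Lemma \ref{lem:deriv-unif} --- is to bound $\E[\sup_{a\le x\le b}|\hat r(x)-\E\hat r(x)|^2]$ via the Fourier representation: since $|\exp(-\mathrm{i}tx)|=1$, the supremum over $x$ comes out for free, and Cauchy--Schwarz together with the $L^2$ bound on the empirical characteristic function (Lemma \ref{lem:phiy}) gives a bound of order $n^{-1}\bigl(\int|\phi_K(th_n)|/|\phi_\varepsilon(t)|\,dt\bigr)^2$, which under the bandwidth condition is $o((\log n)^{-2k/\beta})$ using only $\E[Z^2]<\infty$. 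This avoids both the chaining step and the unjustified exponential concentration. (A minor further slip: the polynomial prefactor in $\sup_x|K_n(x)|$ should be $h_n^{\beta_0}$, as in Lemma \ref{lem:supKn}, rather than $h_n^{-\beta_0\vee 0}$; this is harmless since it is only polylogarithmic in $n$, but worth fixing.)
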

Furthermore, we can show the uniform convergence of the derivative of $\hat{m}(\cdot)$.
\begin{lemma}
\label{lem:deriv-unif}
Under the condition of Lemma \ref{lem:fan1993}, we have, for any $f\in\mathcal{F}$,
\begin{align}
    \sup_{a\le x\le b}|\hat{m}'(x)-m'(x)|\pconv0.
\end{align}
\end{lemma}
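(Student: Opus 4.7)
The plan is to extend the proof of Lemma~\ref{lem:fan1993} to the derivatives. By the quotient rule,
\begin{align}
\hat{m}'(x) = \frac{\hat{r}'(x)\hat{f}(x)-\hat{r}(x)\hat{f}'(x)}{\hat{f}(x)^2}, \qquad m'(x) = \frac{r'(x)f(x)-r(x)f'(x)}{f(x)^2},
\end{align}
so it suffices to establish, uniformly on $[a,b]$: (a) $\hat{r}(x)\pconv r(x)$ and $\hat{f}(x)\pconv f(x)$ (already intermediate steps in the proof of Lemma~\ref{lem:fan1993}); (b) the analogous convergence $\hat{r}'(x)\pconv r'(x)$ and $\hat{f}'(x)\pconv f'(x)$; and (c) a uniform lower bound $\inf_{x\in[a,b]}\hat{f}(x)\ge (2B)^{-1}$ with probability tending to one, which follows from (a) together with assumption (N3).

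The key observation for (b) is that differentiating in $x$ yields
\begin{align}
\hat{f}'(x) = \frac{1}{n h_n^2}\sum_{i=1}^n K_n'\!\left(\frac{x-Y_i}{h_n}\right), \qquad K_n'(u) = \frac{1}{2\pi}\int_{-\infty}^\infty e^{-\mathrm{i}tu}(-\mathrm{i}t)\frac{\phi_K(t)}{\phi_\varepsilon(t/h_n)}dt,
\end{align}
and analogously for $\hat{r}'(x)$. Hence $\hat{f}'$ is, up to an $h_n^{-1}$ prefactor, itself a deconvolution kernel estimator built from the auxiliary kernel $\tilde K$ with Fourier transform $\phi_{\tilde K}(t)=-\mathrm{i}t\,\phi_K(t)$. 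Since $\phi_K$ is supported in $[-M_0,M_0]$, so is $\phi_{\tilde K}$, and one readily checks that $\tilde K$ satisfies the analogue of (N6) with order reduced by one. Consequently, the bias and variance analyses of \citet{fan1993nonparametric} apply to $\hat{f}'$ (targeting $f'$) and $\hat{r}'$ (targeting $r'$), modulo two modifications: the extra $h_n^{-1}$ prefactor from the chain rule, and the extra $|t|$ factor inside the Fourier integrand.

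Under the super-smoothness assumption (N1) and the bandwidth choice of Lemma~\ref{lem:fan1993}, the dominant variance term in the undifferentiated deconvolution setting scales like $(nh_n)^{-1}\int |K_n|^2$, which is of order $n^{-1}\exp(c/h_n^2)$ for some constant $c$, and is tamed to $o(1)$ by the bandwidth condition. The two modifications above inflate this bound by only polynomial factors in $h_n^{-1}$, which are powers of $\log n$ and therefore subdominant compared with the exponential suppression. Standard bracketing or chaining arguments upgrade the pointwise bounds to uniform ones on the compact interval $[a,b]$, after which (a)--(c) combine via the triangle inequality and the continuous mapping theorem to give $\sup_{a\le x\le b}|\hat{m}'(x)-m'(x)|\pconv 0$.

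The main obstacle will be step (b), specifically the variance control for $\hat{f}'$ and $\hat{r}'$: the extra $h_n^{-2}$ and $|t|^2$ factors jointly amplify the Fourier-side integrand, and one must verify that the logarithmic bandwidth still absorbs this amplification uniformly in $x\in[a,b]$. Once this polynomial-versus-exponential comparison is made precise, the remainder of the proof amounts to essentially routine bookkeeping on top of the argument of \citet{fan1993nonparametric}, with the only subtlety being that the rate in (b) is strictly slower than that in (a), but still $o_\mathrm{p}(1)$, which is all that the statement requires.
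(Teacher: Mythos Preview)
Your high-level strategy---apply the quotient rule, then control $\hat f,\hat r,\hat f',\hat r'$ uniformly on $[a,b]$ and combine---is exactly what the paper does. Where you diverge is in the mechanics of step~(b).

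You reinterpret $\hat f'$ as a deconvolution estimator built from an auxiliary kernel $\tilde K$ with $\phi_{\tilde K}(t)=-\mathrm{i}t\,\phi_K(t)$, invoke the Fan (1993) bias/variance machinery with the extra $h_n^{-1}$ and $|t|$ factors, and then appeal to chaining for uniformity. The paper instead works directly with the Fourier representation: since $|e^{-\mathrm{i}tx}|=1$ for every $x$, the supremum over $x\in[a,b]$ comes for free inside the integral, and one obtains immediately
\[
\E\Bigl[\sup_{x}|\hat f'(x)-\E\hat f'(x)|^{2}\Bigr]
\le \frac{1}{(2\pi)^{2}}\Bigl(\int \frac{|t\,\phi_K(th_n)|}{|\phi_\varepsilon(t)|}\,dt\Bigr)^{2}\cdot\frac{1}{n},
\]
with the analogous bound for $\hat r'$. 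The bias terms are handled by integration by parts rather than by kernel-order bookkeeping. This is cleaner: no chaining is needed, and the bandwidth condition $h_n=c(\log n)^{-1/\beta}$ with $c>M_0(2/\gamma)^{1/\beta}$ directly absorbs the extra $|t|$ factor, exactly as you anticipated.

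One point in your write-up is imprecise: $\tilde K$ does \emph{not} satisfy the analogue of (N6), since $\phi_{\tilde K}(0)=0$ means $\int\tilde K=0$, so it is not a kernel integrating to one. The correct statement is that $\tilde K$ has the moment structure appropriate for \emph{derivative} estimation (first nonvanishing moment at order~$1$), which is what makes the bias go to $f'$ rather than $f$. This is a cosmetic issue---your argument can be repaired---but the paper's direct route sidesteps it entirely.
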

To prove this, we use the following two lemmas.
\begin{lemma}
\label{lem:phiy}  We have, for any $t\in\R$,
\begin{align}
    \E\sbr{\abs{\hat{\phi}_Y(t)-\phi_Y(t)}^2}\le n^{-1},
\end{align}
and
\begin{align}
    \E\sbr{\abs{\frac{1}{n}\sum_{i=1}^nZ_i\exp(\mathrm{i}tY_i)-\E[Z\exp(\mathrm{i}tY)]}^2}\le n^{-1}\E[Z^2].
\end{align}
\end{lemma}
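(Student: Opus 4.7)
The plan is to prove both bounds by the direct variance calculation for i.i.d.\ sums, exploiting the fact that $|\exp(\mathrm{i}tY)|=1$ for any real $t, Y$, so that the second-moment bounds simplify dramatically.

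For the first bound, I would write $\hat{\phi}_Y(t) - \phi_Y(t) = n^{-1}\sum_{i=1}^n \bigl(\exp(\mathrm{i}tY_i) - \phi_Y(t)\bigr)$, which is an average of i.i.d.\ centered complex random variables. Expanding the squared modulus and using the i.i.d.\ property to eliminate cross terms gives
\begin{align}
    \E\bigl[|\hat{\phi}_Y(t) - \phi_Y(t)|^2\bigr] = \frac{1}{n}\,\mathrm{Var}\bigl(\exp(\mathrm{i}tY_1)\bigr) = \frac{1}{n}\bigl(\E|\exp(\mathrm{i}tY_1)|^2 - |\phi_Y(t)|^2\bigr).
\end{align}
Since $|\exp(\mathrm{i}tY_1)|=1$, the first term equals $1$, and dropping the nonnegative $|\phi_Y(t)|^2$ yields the stated bound $n^{-1}$.

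For the second bound, I would apply the same argument to the i.i.d.\ complex random variables $W_i := Z_i \exp(\mathrm{i}tY_i)$, whose common mean is $\E[Z\exp(\mathrm{i}tY)]$. The same i.i.d.\ cross-term cancellation gives
\begin{align}
    \E\biggl[\,\biggl|\frac{1}{n}\sum_{i=1}^n W_i - \E[W_1]\biggr|^2\,\biggr] = \frac{1}{n}\,\mathrm{Var}(W_1) \le \frac{1}{n}\,\E[|W_1|^2].
\end{align}
Since $|W_1|^2 = Z_1^2 |\exp(\mathrm{i}tY_1)|^2 = Z_1^2$, we conclude $\E|W_1|^2 = \E[Z^2]$, giving the claimed bound $n^{-1}\E[Z^2]$.

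There is no genuine obstacle here; the only subtlety to record carefully is handling complex-valued variance correctly (i.e., using $|\cdot|^2 = (\cdot)\overline{(\cdot)}$ when expanding and relying on the independence of distinct $Y_i$'s to kill the off-diagonal terms), and noting that $\E[\,\overline{\exp(\mathrm{i}tY_i)}\,\exp(\mathrm{i}tY_j)\,] = \phi_Y(-t)\phi_Y(t) = |\phi_Y(t)|^2$ for $i \neq j$, which is exactly what lets the cross terms collapse to the squared mean.
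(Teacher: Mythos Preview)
Your proposal is correct and follows essentially the same approach as the paper: both compute the variance of an i.i.d.\ average and use $|\exp(\mathrm{i}tY)|=1$ (equivalently $\cos^2+\sin^2=1$) to bound the second moment. The only cosmetic difference is that the paper splits into real and imaginary parts via Euler's formula and bounds $\Var(\cos(tY_1))+\Var(\sin(tY_1))\le \E[\cos^2(tY_1)+\sin^2(tY_1)]=1$, whereas you work directly with the complex variance; your version is slightly cleaner but the content is identical.
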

\begin{proof}[Proof of Lemma \ref{lem:phiy}] We decompose the term on the left-hand side in the first statement by Euler's formula as
\begin{align}
    \E\sbr{\abs{\hat{\phi}_Y(t)-\phi_Y(t)}^2}
    &=\E\sbr{\abs{\frac{1}{n}\sum_{i=1}^ne^{\mathrm{i}tY_i}-\E e^{\mathrm{i}tY}}^2}\\
    &=\E\sbr{\abs{\frac{1}{n}\sum_{i=1}^n\cbr{\cos(tY_i)-\E\cos(tY)}-\frac{\mathrm{i}}{n}\sum_{i=1}^n\cbr{\sin(tY_i)-\E\sin(tY)}}^2}\\
    &=\E\sbr{\cbr{\frac{1}{n}\sum_{i=1}^n\cos(tY_i)-\E\cos(tY)}^2-\cbr{\frac{1}{n}\sum_{i=1}^n\sin(tY_i)-\E\sin(tY)}^2}\\
    &\le\Var\rbr{n^{-1}\sum_{i=1}^n\cos(tY_i)}+\Var\rbr{n^{-1}\sum_{i=1}^n\sin(tY_i)}\\
    &\le n^{-1}{\E\sbr{\cos(tY_1)^2+\sin(tY_1)^2}}=n^{-1}.
\end{align}
Similarly, we obtain
\begin{align}
    \E\sbr{\abs{\frac{1}{n}\sum_{i=1}^nZ_i\exp(\mathrm{i}tY_i)-\E[Z\exp(\mathrm{i}tY)]}^2}
    &=\frac{1}{n}\Var(Z_1\cos(tY_1))+\frac{1}{n}\Var(Z_1\sin(tY_1))\\
    &\le n^{-1}\E[Z^2].
\end{align}
This completes the proof.
\end{proof}
\begin{lemma}
\label{lem:supKn}  Under the setting of Lemma \ref{lem:fan1993}, for bandwidth $h_n=c(\log n)^{-1/\beta}$ with $c>M_0(2/\gamma)^{1/\beta}$, we have
\begin{align}
    n^{-1}\sup_{x}\abs{K_n(x)}^2=o(1).
\end{align}
\end{lemma}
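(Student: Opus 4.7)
The plan is to bound $\sup_x |K_n(x)|$ uniformly in $x$ by estimating the Fourier-integral representation of $K_n$, using the bounded support of $\phi_K$ together with the super-smoothness tail lower bound (N1) on $|\phi_\varepsilon|$; the final step is to verify that the resulting bound, once squared and divided by $n$, is $o(1)$ after plugging in the specific bandwidth. First, since $\phi_K$ is bounded (as the Fourier transform of the $L^1$ kernel $K$) and supported on $[-M_0,M_0]$, the triangle inequality applied to the definition of $K_n$ gives
\begin{align*}
\sup_x |K_n(x)| \le \frac{1}{2\pi}\int_{-M_0}^{M_0}\frac{|\phi_K(t)|}{|\phi_\varepsilon(t/h_n)|}\,dt \le \frac{\|\phi_K\|_\infty\, M_0}{\pi}\cdot \sup_{|t|\le M_0}|\phi_\varepsilon(t/h_n)|^{-1}.
\end{align*}
The right-hand side is independent of $x$, so it remains only to estimate this supremum.

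Next, I would invoke (N1): for $|s|\ge S_0$ with some fixed threshold $S_0$, it yields $|\phi_\varepsilon(s)|^{-1}\le d_0^{-1}|s|^{-\beta_0}\exp(|s|^\beta/\gamma)$. I split the range $|t|\le M_0$ into the ``small'' region $\{|t/h_n|\le S_0\}$, where $|\phi_\varepsilon|^{-1}$ is bounded by a constant via continuity and the non-vanishing condition (N2), and the complementary ``large'' region, where the tail bound applies. Since for $|s|$ large the function $|s|^{-\beta_0}\exp(|s|^\beta/\gamma)$ is monotone increasing in $|s|$ (the exponential dominates the polynomial correction), the supremum over the complementary region is asymptotically attained at $|t|=M_0$, yielding
\begin{align*}
\sup_{|t|\le M_0}|\phi_\varepsilon(t/h_n)|^{-1} \le C\, h_n^{\beta_0}\exp\!\left(\frac{M_0^\beta}{\gamma\, h_n^\beta}\right)
\end{align*}
for all sufficiently large $n$, with the constant $C$ absorbing $d_0^{-1}$, $M_0^{-\beta_0}$, and the bounded small-$|s|$ contribution.

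Substituting $h_n=c(\log n)^{-1/\beta}$ converts the exponential into a power of $n$: $\exp(M_0^\beta/(\gamma h_n^\beta)) = n^{M_0^\beta/(\gamma c^\beta)}$, while $h_n^{\beta_0} = c^{\beta_0}(\log n)^{-\beta_0/\beta}$. Squaring and dividing by $n$ then gives
\begin{align*}
n^{-1}\sup_x|K_n(x)|^2 \le C'\,(\log n)^{-2\beta_0/\beta}\, n^{\,2M_0^\beta/(\gamma c^\beta)-1}.
\end{align*}
The hypothesis $c>M_0(2/\gamma)^{1/\beta}$ is exactly equivalent to $c^\beta>2M_0^\beta/\gamma$, which makes the exponent of $n$ strictly negative; the polylogarithmic factor is then absorbed, yielding the required $o(1)$ bound. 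The only mildly delicate point is justifying uniform absorption of the polynomial correction $|s|^{-\beta_0}$ from (N1) on $|t|\le M_0$ (which could diverge as $h_n\to 0$ when $\beta_0<0$) and the small-$|t|$ contribution where (N1) does not apply; both are handled by the split and monotonicity argument above, so no substantive obstacle is anticipated beyond careful bookkeeping.
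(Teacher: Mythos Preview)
Your proposal is correct and follows essentially the same approach as the paper: bound $\sup_x|K_n(x)|$ by the integral $\int|\phi_K(t)|/|\phi_\varepsilon(t/h_n)|\,dt$, split into a small-$|t/h_n|$ region (handled by continuity and (N2)) and a large-$|t/h_n|$ region (handled by the (N1) tail bound), and then verify that the choice $h_n=c(\log n)^{-1/\beta}$ with $c>M_0(2/\gamma)^{1/\beta}$ makes the squared bound $o(n)$. The only cosmetic difference is that you pull out $\sup_{|t|\le M_0}|\phi_\varepsilon(t/h_n)|^{-1}$ and bound the product $|s|^{-\beta_0}\exp(|s|^\beta/\gamma)$ as a whole via eventual monotonicity, whereas the paper keeps the integral and bounds the polynomial and exponential factors separately; both routes yield the same $O(h_n^{\beta_0}\exp(M_0^\beta/(\gamma h_n^\beta)))$ estimate.
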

\begin{proof}[Proof of Lemma \ref{lem:supKn}] 
At first, (N1) implies that there exists a constant $M$ such that
\begin{align}
    \abs{\phi_\varepsilon(t)}>\frac{d_0}{2}|t|^{\beta_0}\exp(-|t|^\beta/\gamma),
\end{align}
for $|t|>M$.
By the fact that $\abs{\exp(-\mathrm{i}tx)}=1$ and that the support of $\phi_K(\cdot)$ is bounded by $M_0$, we have
\begin{align}
    \sup_{x}\abs{K_n(x)}
    &\le\int_{-\infty}^\infty\frac{\abs{\phi_K(t)}}{\abs{\phi_\varepsilon(t/h_n)}}dt\\
    &\le2\int_0^{Mh_n}\frac{\abs{\phi_K(t)}}{\abs{\phi_\varepsilon(t/h_n)}}dt
    +\frac{4}{d_0}\int_{Mh_n}^{M_0}|\phi_K(t)|\abs{\frac{t}{h_n}}^{-\beta_0}\exp\rbr{\frac{|t/h_n|^\beta}{\gamma}}dt\\
    &\le2h_n\int_0^M\frac{1}{|\phi_\varepsilon(u)|}du
    +\frac{4}{d_0}(M_0-Mh_n)h_n^{\beta_0}M_0^{-\beta_0}\exp\rbr{\frac{|M_0/h_n|^\beta}{\gamma}}\\
    &=O(h_n)+O(h_n^{\beta_0}\exp(|M_0/h_n|^\beta/\gamma)).
\end{align}
Here, we use the fact that $|\phi_K(t)|\le{\int|e^{-itx}||K(x)|dx}<\infty$.
Since we choose $h_n=c(\log n)^{-1/\beta}$ with $c>M_0(2/\gamma)^{1/\beta}$, we obtain the conclusion.
\end{proof}

\begin{proof}[Proof of Lemma \ref{lem:deriv-unif}] Let $a\le x\le b$. To begin with, by the triangle inequality, we have
\begin{align}
    \sup_{a \leq x \leq b}\abs{\hat{m}'(x)-m'(x)}
    &=\sup_{a \leq x \leq b}\abs{\frac{\hat{r}'(x)\hat{f}(x)-\hat{r}(x)\hat{f}'(x)}{\hat{f}(x)^2}-\frac{{r}'(x){f}(x)-{r}(x){f}'(x)}{{f}(x)^2}}\\
    &\le\sup_{a \leq x \leq b}\abs{\frac{\hat{r}'(x)\hat{f}(x)-\hat{r}(x)\hat{f}'(x)-{r}'(x){f}(x)+{r}(x){f}'(x)}{f(x)^2}}\\
    &\quad+\sup_{a \leq x \leq b}\abs{\frac{\hat{r}'(x)\hat{f}(x)-\hat{r}(x)\hat{f}'(x)}{f(x)^2}\rbr{\frac{f(x)^2}{\hat{f}(x)^2}-1}}\\
    &\le B^2\sup_{a \leq x \leq b}\abs{\hat{r}'(x)\hat{f}(x)-\hat{r}(x)\hat{f}'(x)-{r}'(x){f}(x)+{r}(x){f}'(x)}\\
    &\quad+B^2\sup_{a \leq x \leq b}\abs{\hat{r}'(x)\hat{f}(x)-\hat{r}(x)\hat{f}'(x)}\abs{\frac{f(x)^2-\hat{f}(x)^2}{\hat{f}(x)^2}},\label{eq:mprime}
\end{align}
where the last inequality uses the assumption $\min_{a \leq x \leq b}|f(x)|\ge B^{-1}$.
We consider showing the convergence in probability by showing the $L^1$ convergence. Using the triangle inequality and the Cauchy-Schwarz inequality, we have
\begin{align}
    &\E\sup_{a \leq x \leq b}\abs{\hat{r}'(x)\hat{f}(x)-\hat{r}(x)\hat{f}'(x)-{r}'(x){f}(x)+{r}(x){f}'(x)}\\
    &\le\E\sup_{a \leq x \leq b}\abs{\hat{f}(x)\rbr{\hat{r}'(x)-r'(x)}}
    +\E\sup_{a \leq x \leq b}\abs{r'(x)\rbr{\hat{f}(x)-f(x)}}\\
    &\quad+\E\sup_{a \leq x \leq b}\abs{f'(x)\rbr{r(x)-\hat{r}(x)}}
    +\E\sup_{a \leq x \leq b}\abs{\hat{r}(x)\rbr{f'(x)-\hat{f}'(x)}}\\
    &\le\sqrt{\E\sbr{\sup_{a \leq x \leq b}\abs{\hat{f}(x)}^2}}\sqrt{\E\sbr{\sup_{a \leq x \leq b}\abs{\hat{r}'(x)-r'(x)}^2}}
    +\sup_{a \leq x \leq b}|r'(x)|\sqrt{\E\sbr{\sup_{a \leq x \leq b}\abs{\hat{f}(x)-f(x)}^2}}\\
    &\quad+\sup_{a \leq x \leq b}|f'(x)|\sqrt{\E\sbr{\sup_{a \leq x \leq b}\abs{\hat{r}(x)-r(x)}^2}}
    +\sqrt{\E\sbr{\sup_{a \leq x \leq b}\abs{\hat{r}(x)}^2}}\sqrt{\E\sbr{\sup_{a \leq x \leq b}\abs{\hat{f}'(x)-f'(x)}^2}}.
\end{align}
Thus, to bound the right-hand side of \eqref{eq:mprime}, we need to show that $\E[{\sup_x|{\hat{f}(x)}|^2}]$ and $\E[{\sup_x\abs{\hat{r}(x)}^2}]$ are bounded by constants and that $\E[{\sup_x|{\hat{f}(x)-f(x)}|^2}]$, $ \E[{\sup_x\abs{\hat{r}(x)-r(x)}^2}]$, $\E[{\sup_x|{\hat{f}'(x)-f'(x)}|^2}],$ and $\E[{\sup_x\abs{\hat{r}'(x)-r'(x)}^2}]$ converge to zero.

\noindent $\bullet$ \textbf{Bound for $\E\sbr{\sup_{a \leq x \leq b}|{\hat{f}(x)-f(x)}|^2}$}. 
By triangle inequality and the fact that $(x+y)^2\le 2x^2+2y^2$ for $x,y\in\R$, we have
\begin{align}
    \E\sbr{\sup_{a \leq x \leq b}|{\hat{f}(x)-f(x)}|^2}
    \le2\E\sbr{\sup_{a \leq x \leq b}|{\hat{f}(x)-\E\hat{f}(x)}|^2}+2\sup_{a \leq x \leq b}|{\E\hat{f}(x)-f(x)}|^2.\label{eq:fhat-diff}
\end{align}
For the first term of the left-hand side of \eqref{eq:fhat-diff}, the Cauchy-Schwarz inequality gives
\begin{align}
    \E\sbr{\sup_{a \leq x \leq b}|{\hat{f}(x)-\E\hat{f}(x)}|^2}
    &\le\frac{1}{(2\pi)^2}\E\sbr{\cbr{\int_{-\infty}^\infty\frac{|\phi_K(th_n)|}{|\phi_\varepsilon(t)|}\abs{\hat{\phi}_Y(t)-\phi_Y(t)}dt}^2}\\
    &\le\frac{1}{(2\pi)^2}\cbr{\int_{-\infty}^\infty\frac{|\phi_K(th_n)|}{|\phi_\varepsilon(t)|}dt}\cbr{\int_{-\infty}^\infty\E\sbr{\abs{\hat{\phi}_Y(t)-\phi_Y(t)}^2}\frac{|\phi_K(th_n)|}{|\phi_\varepsilon(t)|}dt}.
\end{align}
Lemma \ref{lem:phiy} and the proof of Lemma \ref{lem:supKn} imply that this converges to zero as $n\to\infty$. Next, we consider the second term in \eqref{eq:fhat-diff}. We obtain
\begin{align}
    \E\sbr{\hat{f}(x)}
    &=\frac{1}{2\pi}\int_{-\infty}^\infty\exp(-\mathrm{i}tx)\phi_K(th_n)\frac{\E_Y[\exp(\mathrm{i}tY)]}{\phi_\varepsilon(t)}dt\\
    &=\frac{1}{2\pi}\E_X\sbr{\int_{-\infty}^\infty\exp(\mathrm{i}tx)\phi_K(th_n)\exp(-\mathrm{i}tX)dt}\\
    &=\E_X\sbr{\frac{1}{2\pi h_n}\int_{-\infty}^\infty\exp\rbr{\mathrm{i}t\frac{x-X}{h}}\phi_K(t)dt}\\
    &=\frac{1}{h_n}\E_X\sbr{K\rbr{\frac{x-X}{h_n}}}.
\end{align}
Thus, a classical result for the kernel density estimation gives $\sup_x|\E[\hat{f}(x)]-f(x)|\to0$ as $n\to0$.

\noindent $\bullet$ \textbf{Bound for $\E\sbr{\sup_{a \leq x \leq b}|{\hat{r}(x)-r(x)}|^2}$}. 
By triangle inequality and the fact that $(x+y)^2\le 2x^2+2y^2$,
\begin{align}
    \E\sbr{\sup_{a \leq x \leq b}|{\hat{r}(x)-r(x)}|^2}
    \le2\E\sbr{\sup_{a \leq x \leq b}|{\hat{r}(x)-\E\hat{r}(x)}|^2}+2\sup_{a \leq x \leq b}|{\E\hat{r}(x)-r(x)}|^2.\label{eq:rhat-diff}
\end{align}
For the first term of the left-hand side of \eqref{eq:rhat-diff}, Cauchy-Schwarz inequality gives
\begin{align}
    \E\sbr{\sup_{a \leq x \leq b}|{\hat{r}(x)-\E\hat{r}(x)}|^2}
    &\le\frac{1}{(2\pi)^2}\E\sbr{\cbr{\int_{-\infty}^\infty\frac{|\phi_K(th_n)|}{|\phi_\varepsilon(t)|}\abs{\frac{1}{n}\sum_{i=1}^nZ_i\exp(\mathrm{i}tY_j)-\E[Z\exp(\mathrm{i}tY)]}dt}^2}\\
    &\le\frac{1}{(2\pi)^2}\cbr{\int_{-\infty}^\infty\frac{|\phi_K(th_n)|}{|\phi_\varepsilon(t)|}dt}\cbr{\frac{1}{n}\int_{-\infty}^\infty\frac{|\phi_K(th_n)|}{|\phi_\varepsilon(t)|}dt},
\end{align}
where we use the proof of Lemma \ref{lem:supKn} for the last inequality.
Lemma \ref{lem:phiy} implies that this term converges to zero as $n\to\infty$. Next, we consider the second term in \eqref{eq:rhat-diff}. We have
\begin{align}
    \E\sbr{\hat{r}(x)}
    =\frac{1}{h_n}\E_{X,Z}\sbr{ZK\rbr{\frac{x-X}{h_n}}}.
\end{align}
Thus we have $\sup_{a \leq x \leq b}|\E[\hat{r}(x)]-r(x)|\to0$.

\noindent $\bullet$ \textbf{Bound for $\E\sbr{\sup_{a \leq x \leq b}|{\hat{f}'(x)-f'(x)}|^2}$}. By triangle inequality and the fact that $(x+y)^2\le 2x^2+2y^2$ for $x,y\in\R$, we have
\begin{align}
    \E\sbr{\sup_{a \leq x \leq b}|{\hat{f}'(x)-f'(x)}|^2}
    \le2\E\sbr{\sup_{a \leq x \leq b}|{\hat{f}'(x)-\E\hat{f}'(x)}|^2}+2\sup_{a \leq x \leq b}|{\E\hat{f}'(x)-f'(x)}|^2.\label{eq:fphat-diff}
\end{align}
For the first term of the left-hand side of \eqref{eq:fphat-diff}, since $\partial\exp(-\mathrm{i}tx)/(\partial x)=-\mathrm{i}t\exp(-\mathrm{i}tx)$ and $|\mathrm{i}|=|\exp(-\mathrm{i}tx)|=1$,
\begin{align}
    \E\sbr{\sup_{a \leq x \leq b}|{\hat{f}'(x)-\E\hat{f}'(x)}|^2}
    &=\E\sbr{\sup_{a \leq x \leq b}\abs{\frac{1}{2\pi}\int_{-\infty}^\infty-\mathrm{i}t\exp(-\mathrm{i}tx)\frac{\phi_K(th_n)}{\phi_\varepsilon(t)}\cbr{\hat{\phi}_Y(t)-\phi_Y(t)}dt}^2}\\
    &\le\frac{1}{(2\pi)^2}\E\sbr{\cbr{\int_{-\infty}^\infty\frac{|t\phi_K(th_n)|}{|\phi_\varepsilon(t)|}\abs{\hat{\phi}_Y(t)-\phi_Y(t)}dt}^2}.
\end{align}
Thus, this converges to zero in the same way as \eqref{eq:fhat-diff}. For the second term in \eqref{eq:fphat-diff}, by the integration by parts,
\begin{align}
    \E\sbr{\hat{f}'(x)}
    &=\frac{1}{h_n^2}\int_{-\infty}^\infty K'\rbr{\frac{x-y}{h_n}}f(y)dy\\
    &=\frac{1}{h_n}\int_{-\infty}^\infty K\rbr{\frac{x-y}{h_n}}f'(y)dy
    -\frac{1}{h_n}\sbr{K\rbr{\frac{x-y}{h_n}}f'(y)}_{-\infty}^\infty.
\end{align}
Here, the second term is zero and the first term converges to $f'(x)$ uniformly.

\noindent $\bullet$ \textbf{Bound for $\E\sbr{\sup_{a \leq x \leq b}|{\hat{r}'(x)-r'(x)}|^2}$}. By triangle inequality and the fact that $(x+y)^2\le 2x^2+2y^2$ for $x,y\in\R$, we have
\begin{align}
    \E\sbr{\sup_{a \leq x \leq b}|{\hat{r}'(x)-r'(x)}|^2}
    \le2\E\sbr{\sup_{a \leq x \leq b}|{\hat{r}'(x)-\E\hat{r}'(x)}|^2}+2\sup_{a \leq x \leq b}|{\E\hat{r}'(x)-r'(x)}|^2.\label{eq:rphat-diff}
\end{align}
For the first term of the left-hand side of \eqref{eq:rphat-diff}, since $|\mathrm{i}|=|\exp(-\mathrm{i}tx)|=1$, we have
\begin{align}
    &\E\sbr{\sup_x|{\hat{f}'(x)-\E\hat{f}'(x)}|^2}\\
    &=\E\sbr{\sup_x\abs{\frac{1}{2\pi}\int_{-\infty}^\infty-\mathrm{i}t\exp(-\mathrm{i}tx)\frac{\phi_K(th_n)}{\phi_\varepsilon(t)}\cbr{\frac{1}{n}\sum_{i=1}^nZ_i\exp(-\mathrm{i}tY_i)-\E[Z\exp(-\mathrm{i}tZ)]}dt}^2}\\
    &\le\frac{1}{(2\pi)^2}\E\sbr{\cbr{\int_{-\infty}^\infty\frac{|t\phi_K(th_n)|}{|\phi_\varepsilon(t)|}\abs{\frac{1}{n}\sum_{i=1}^nZ_i\exp(-\mathrm{i}tY_i)-\E[Z\exp(-\mathrm{i}tZ)]}dt}^2}.
\end{align}
Thus, this converges to zero in the same way as \eqref{eq:rhat-diff}. For the second term in \eqref{eq:rphat-diff}, by the integration by parts,
\begin{align}
    \E\sbr{\hat{r}'(x)}
    &=\frac{1}{h_n^2}\int_{-\infty}^\infty \int_{-\infty}^\infty zK'\rbr{\frac{x-y}{h_n}}f(y,z)dydz\\
    &=\frac{1}{h_n}\int_{-\infty}^\infty\int_{-\infty}^\infty zK\rbr{\frac{x-y}{h_n}}\frac{\partial}{\partial y}f(y,z)dy dz
    -\frac{1}{h_n}\int_{-\infty}^\infty\sbr{zK\rbr{\frac{x-y}{h_n}}\frac{\partial}{\partial y}f(y,z)}_{-\infty}^\infty dz.
\end{align}
Here, the second term is zero and the first term converges to $r'(x)=(\partial/\partial x)\int zf(x,z)dz$ uniformly.

\noindent $\bullet$ \textbf{Bound for $\E\sbr{\sup_{a \leq x \leq b}|{\hat{f}(x)}|^2}$ and $\E\sbr{\sup_{a \leq x \leq b}|{\hat{r}(x)}|^2}$}. By triangle inequality and the fact that $(x+y)^2\le 2x^2+2y^2$,
\begin{align}
    \E\sbr{\sup_{a \leq x \leq b}|{\hat{f}(x)}|^2}
    &\le2\sup_{a \leq x \leq b}|f(x)|^2+2\E\sbr{\sup_{a \leq x \leq b}|\hat{f}(x)-f(x)|^2}.
\end{align}
We have already shown $\E[{\sup_{a \leq x \leq b}|\hat{f}(x)-f(x)|^2]=o(1)}$, $\E[{\sup_{a \leq x \leq b}|{\hat{f}(x)}|^2}]$ is asymptotically bounded by a constant.
Similarly, we can show that $\E\sbr{\sup_{a \leq x \leq b}|{\hat{r}(x)}|^2}$ is asymptotically bounded by a constant. 
Combining these results together, we conclude that the first term of \eqref{eq:mprime} is $o(1)$.

Next, we consider the second term of \eqref{eq:mprime}. Since $\hat{f}(x)$ is asymptotically bounded uniformly on $[a,b]$ by the results above, we have only to show that $\sup_{a \leq x \leq b}|\hat{f}(x)^2-f(x)^2|=o(1)$. This holds since
\begin{align}
    \sup_{a \leq x \leq b}|\hat{f}(x)^2-f(x)^2|
    \le \sup_{a \leq x \leq b}|\hat{f}(x)+f(x)| \sup_{a \leq x \leq b}|\hat{f}(x)-f(x)|
    =o_\mathrm{p}(1).
\end{align}
This concludes that $\sup_{a \leq x \leq b}|\hat{m}'(x)-m'(x)|\pconv0$ as $n\to\infty$.
\end{proof}

\section{Proofs of the Results}
For a convex function $f:\R\to\R$ and a constant $\gamma>0$, define the proximal operator $\prox_{\gamma f}:\R\to\R$ as
\begin{align}
    \prox_{\gamma f}(x)=\argmin_{z\in\R}\cbr{\gamma f(z)+\frac{1}{2}(x-z)^2}.
\end{align}
\subsection{Proof of Master Theorem}
\label{sec:proof-master}
First, we define the notation used in the proof. We consider an invertible matrix $\bL\in\R^{p\times p}$ that satisfies $\bSigma=\bL\bL^\top$. Define, for each $i\in\{1,\ldots,n\}$,
\begin{align}
\label{eq:eta}
    \tilde\bX_i=\bL^{-1}\bX_i,\quad
    \bmeta = \bL^\top\bbeta,\quad
    \hat\bmeta=\bL^\top\bar\bbeta.
\end{align}
\begin{proof}[Proof of Theorem \ref{thm:master-zsc}]
We consider the following three steps.

\textbf{Step 1: Reduction to standard Gaussian features}.
Note that the single-index model $y_i=g(\bX_i^\top\bbeta)+\varepsilon_i$ is equivalent to $y_i=g(\tilde\bX_i^\top\bmeta)+\varepsilon_i$.
Since $\bX_i^\top\bb=\tilde\bX_i^\top(\bL^\top\bb)$, we have $\bar\ell(\bX_i^\top\bb;y_i)=\bar\ell(\tilde\bX_i^\top\bL^\top \bb;y_i)$. Hence, $\hat\bmeta\in\argmin_{\tilde\bb\in\R^p}\sum_{i=1}^n\bar\ell(\tilde\bX_i^\top\tilde\bb;y_i)$ is the estimator corresponding to the true parameter $\bmeta\in\R^p$ and features $\tilde\bX_i\sim\mathcal{N}_p(\bzero,\bI_p)$.

We can choose $\bSigma=\bL\bL^\top$ to be a Cholesky factorization so that $\eta_p=\tau_p\beta_p$ and $\hat\eta_p=\tau_p\bar\beta_p$ with $\tau_p=(\bSigma^{-1})_{pp}^{-1/2}$ by \eqref{eq:eta}. This follows from the fact that $L_{pp}=\tau_p$ since $\tau_p^2=\mathrm{Var}(X_{ip}\mid\bX_{i\setminus p})=\mathrm{Var}(X_{ip}\mid\tilde\bX_{i\setminus p})$, where $\bX_{i\setminus p}\in\R^{p-1}$ denotes the vector $\bX_i$ without $p$th coordinate. Since we can generalize this to any coordinate by permutation, we obtain
\begin{align}
    \tau_j\frac{\bar\beta_j-\mu_{\bar\bbeta}\beta_j}{\sigma_{\bar\bbeta}}=\frac{\hat\eta_j-\mu_{\bar\bbeta}\eta_j}{\sigma_{\bar\bbeta}},
\end{align}
for each $j\in\{1,\ldots,p\}$ and any pair $(\mu_{\bar\bbeta},\sigma_{\bar\bbeta})$.

\textbf{Step 2: Reduction to uniform distribution on sphere}.
Define an orthogonal projection matrix $\bP_{\bmeta}=\bmeta\bmeta^\top/\norm{\bmeta}^2$ onto $\bmeta$, and an orthogonal projection matrix $\bP_{\bmeta}^\perp=\bI_p-\bP_{\bmeta}$ onto the orthogonal complement of $\bmeta$. Let $\bU\in\R^{p\times p}$ be any orthogonal matrix obeying $\bU\bmeta=\bmeta$, namely, any rotation operator about $\bmeta$. Then, since $\hat\bmeta=\bP_{\bmeta}\hat\bmeta+\bP_{\bmeta}^\perp\hat\bmeta$, we have
\begin{align}
    \bU\hat\bmeta
    =\bU\bP_{\bmeta}\hat\bmeta+\bU\bP_{\bmeta}^\perp\hat\bmeta
    =\bP_{\bmeta}\hat\bmeta+\bU\bP_{\bmeta}^\perp\hat\bmeta.
\end{align}
Using this, we obtain
\begin{align}
\label{eq:rotation}
    \frac{\bU\bP_{\bmeta}^\perp\hat\bmeta}{\|\bP_{\bmeta}^\perp\hat\bmeta\|}
    \overset{\rm d}{=}\frac{\bP_{\bmeta}^\perp\hat\bmeta}{\|\bP_{\bmeta}^\perp\hat\bmeta\|}
    =\frac{\hat\bmeta-\mu_{\bar\bbeta}\bmeta}{\sigma_{\bar\bbeta}},
\end{align}
where the first identity follows from the fact that $\bU\hat\bmeta\overset{\rm d}{=}\hat\bmeta$ since $\bU\hat\bmeta$ is the estimator with a true coefficient $\bU\bmeta=\bmeta$ and features drawn iid from $\mathcal{N}_p(\bzero,\bI_p)$, by $\bar\ell(\tilde\bX_i^\top\tilde\bb;y_i)=\bar\ell((\bU^\top\tilde\bX_i)^\top\bU\tilde\bb;y_i)$ and $\bU^\top\tilde\bX_i\overset{\rm d}{=}\tilde\bX_i$.
\eqref{eq:rotation} reveals that $(\hat\bmeta-\mu_{\bar\bbeta}\bmeta)/\sigma_{\bar\bbeta}$ is rotationally invariant about $\bmeta$, lies in $\bmeta^\perp$, and has a unit norm. This means $(\hat\bmeta-\mu_{\bar\bbeta}\bmeta)/\sigma_{\bar\bbeta}$ is uniformly distributed on the unit sphere lying in $\bmeta^\perp$.

\textbf{Step 3: Deriving asymptotic normality}.
The result of the previous step gives us
\begin{align}
\label{eq:deq-PZ}
    \frac{\hat\bmeta-\mu_{\bar\bbeta}\bmeta}{\sigma_{\bar\bbeta}}
    \overset{\rm d}{=}\frac{\bP_{\bmeta}^\perp\bZ}{\|\bP_{\bmeta}^\perp\bZ\|},
\end{align}
where $\bZ\sim\mathcal{N}_p(\bzero,\bI_p)$. Triangle inequalities yield that
\begin{align}
    \frac{\norm{\bZ}}{\sqrt{p}}-\frac{|\bmeta^\top\bZ|}{\sqrt{p}\norm{\bmeta}}
    \le\frac{\|\bP_{\bmeta}^\perp\bZ\|}{\sqrt{p}}
    \le\frac{\norm{\bZ}}{\sqrt{p}}+\frac{|\bmeta^\top\bZ|}{\sqrt{p}\norm{\bmeta}}.
\end{align}
Since $|\bmeta^\top\bZ|/(\sqrt{p}\norm{\bmeta})\as0$ and $\norm{\bZ}/\sqrt{p}\as1$, we obtain $\|\bP_{\bmeta}^\perp\bZ\|/\sqrt{p}\as1$. Therefore, this fact and \eqref{eq:deq-PZ} imply that
\begin{align}
    \sqrt{p}\frac{\hat\eta_j-\mu_{\bar\bbeta}\eta_j}{\sigma_{\bar\bbeta}}
    \overset{\rm d}{=}\check\sigma_j Q+o_\mathrm{p}(1),\quad\check\sigma_j^2=1-\frac{\eta_j^2}{\norm{\bmeta}^2},
\end{align}
where $Q\sim\mathcal{N}(0,1)$. 
Here we use the fact that the covariance matrix of $\bP_{\bmeta}^\perp \bZ$ is $\bP_{\bmeta}^\perp \bP_{\bmeta}^\perp=\bI_p-\bmeta\bmeta^\top/\|\bmeta\|^2$. Assumptions $\eta_j=o(1)$ and $\|\bmeta\|=1$ complete the proof.
\end{proof}

\subsection{Proof of Theorem \ref{thm:link-est}}
Let $\varsigma^2$ be the ratio $\sigma_1^2/\mu_1^2$, where $\mu_1^2$ and $\sigma_1^2$ are true inferential parameters of the pilot estimator $\tilde\bbeta$.
\begin{proof}[Proof of Proposition \ref{prop:initial}]
First, define $\gamma_1=\trace(\bSigma({\bX^{(1)}}^\top {\bX^{(1)}})^{-1})$.
We also define
\begin{align}
    \mu_1=\bbeta^\top\bSigma\tilde{\bbeta}, \quad
    \sigma_1^2=\tilde{\bbeta}^\top\bSigma\tilde{\bbeta}-\mu_1^2.\label{eq:mu1}
\end{align}
Let $\tilde{r}^2$ be the mean squared error $n_1^{-1}\|{\by^{(1)}-\bX^{(1)}\Tilde{\bbeta}\|}^2$. Since $\Tilde{\bbeta}$ is a ridge estimator, Theorem 4.3 in \citet{bellec2022observable} implies, 
\begin{align}
\label{eq:belThm43}
    \max_{1\le i\le n_1}\E\sbr{\tilde{r}^{-2}\abs{\Tilde{\bbeta}^\top{\bX_i^{(1)}}-\prox_{\gamma_1f}\rbr{\mu_1\bbeta^\top{\bX_i^{(1)}}+\sigma_1Z_i}}^2}\le\frac{C}{n_1},
\end{align}
with $f(t)=t^2/2$. 
Since ridge regression satisfies $\|\tilde\bbeta\|\le C_\lambda$ with a constant $C_\lambda>0$ depending on the regularization parameter $\lambda>0$ by the KKT condition, we have $\tilde{r}^2=O_\mathrm{p}(1)$. Hence,
\begin{align}
    \abs{\Tilde{\bbeta}^\top{\bX_i^{(1)}}-\prox_{\gamma_1f}\rbr{\mu_1\bbeta^\top{\bX_i^{(1)}}+\sigma_1Z_i}}\pconv0,
\end{align}
as $n_1\to\infty$ for each $i\in[n_1]$. By using the fact that $\prox_{\gamma f}(a)=a-f'(\prox_{\gamma f}(a))$ for $a\in\R$, $\gamma>0$, and $f:\R\to\R$ by the definition of the proximal operator, we obtain
\begin{align}
\label{eq:index-pconv}
    \abs{\Tilde{\bbeta}^\top{\bX_i^{(1)}}+\gamma_1\rbr{{y_i^{(1)}}-\Tilde{\bbeta}^\top{\bX_i^{(1)}}}-\mu_1\bbeta^\top{\bX_i^{(1)}}-\sigma_1Z_i}\pconv0,
\end{align}
as $n_1\to\infty$. 
Next, we consider to replace $(\mu_1,\sigma_1,\gamma_1)$ with observable adjustments $(\tilde\mu,\tilde\sigma,\tilde\gamma)$. Theorem 4.4 in \citet{bellec2022observable} gives their consistency:
\begin{align}
    &\E\sbr{\tilde{v}\abs{\tilde\gamma-\gamma_1}}\le C_1 n^{-1/2},\\
    &\E\sbr{\Tilde{v}^2\Tilde{t}^2\Tilde{r}^{-2}\rbr{\abs{\tilde\mu^2-\mu_1^2}+\abs{\tilde\sigma^2-\sigma_1^2}}}\le C_2 n^{-1/2},
\end{align}
where we define $\Tilde{t}^2=\|(\lambda\bSigma^{-1/2}+\Tilde{v}\bSigma^{1/2})\tilde\bbeta\|^2-\kappa_1\Tilde{r}^2$ and $C_1, C_2$ are positive constants.
Proposition 3.1 in \citet{bellec2022observable} implies  that $\Tilde{v}\ge1/(1+\Bar{c})-4\Bar{c}/n_1$ for a constant $\Bar{c}>0$. Also, Theorem 4.4 in \citet{bellec2022observable} implies that $\Tilde{t}^2\pconv(\bbeta^\top(\Tilde{v}\bSigma+\lambda)\tilde\bbeta)^2$. By using these results, we have
$\tilde\gamma\pconv\gamma_1$, $\tilde\mu\pconv\mu_1$, and $\tilde\sigma^2\pconv\sigma_1^2$ as $n_1\to\infty$ since the sign of $\mu_1$ is specified by an assumption. 
Then, triangle inequality implies
\begin{align}
    &\abs{\Tilde{\bbeta}^\top{\bX_i^{(1)}}+\tilde\gamma\rbr{y_i^{(1)}-\Tilde{\bbeta}^\top{\bX_i^{(1)}}}-\tilde\mu\bbeta^\top{\bX_i^{(1)}}-\tilde\sigma Z_i}\\
    &\le\abs{\Tilde{\bbeta}^\top{\bX_i^{(1)}}+\gamma_1\rbr{y_i^{(1)}-\Tilde{\bbeta}^\top{\bX_i^{(1)}}}-\mu_1\bbeta^\top{\bX_i^{(1)}}-\sigma_1Z_i}\\
    &+\abs{(\gamma_1-\tilde\gamma)\rbr{y_i^{(1)}-\tilde\bbeta^\top{\bX_i^{(1)}}}}
    +\abs{(\mu_1-\tilde\mu)\tilde\bbeta^\top{\bX_i^{(1)}}}
    +\abs{(\sigma_1-\tilde\sigma)Z_i},
\end{align}
which converges in probability to zero.
\end{proof}

To prove Theorem \ref{thm:link-est}, we first introduce an approximation $\Tilde{g}(\cdot)$ for the link estimator $\hat{g}(\cdot)$ defined in \eqref{eq:deconv}.

\begin{lemma}
\label{lem:g-approx}
    For $i=1,\ldots,n_1$, define $\tilde{W_i}=\bbeta^\top{\bX_i^{(1)}}+\varsigma Z_i$ with $Z_i\overset{\rm ind}{\sim}\mathcal{N}(0,1)$ and $\varsigma=\sigma_1/|\mu_1|$.
    We also define
    \begin{align}
    \label{eq:lip-deconv}
        \tilde{g}(x):=\frac{\displaystyle\sum_{i=1}^{n_1}y_i^{(1)}\int_{-\infty}^\infty \exp\rbr{t^2\varsigma^2/(2h_n^2)-\mathrm{i}t(x-\tilde{W_i}  )/h_n}\phi_K(t)dt}{\displaystyle\sum_{i=1}^{n_1}\int_{-\infty}^\infty \exp\rbr{t^2\varsigma^2/(2h_n^2)-\mathrm{i}t(x-\tilde{W_i}  )/h_n}\phi_K(t)dt}.
    \end{align}
    Then, under the setting of Theorem \ref{thm:link-est}, we have, as $n_1\to\infty$,
    \begin{align}
        \sup_{a\le x\le b}\abs{\breve{g}(x)-\tilde{g}(x)}=O_\mathrm{p}\rbr{\frac{1}{(\log n_1)^{m/2}}}.
    \end{align}
\end{lemma}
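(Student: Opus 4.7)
\textbf{Proof proposal for Lemma \ref{lem:g-approx}.} The plan is to compare $\breve{g}$ and $\tilde{g}$ by handling two sources of discrepancy separately: (a) replacement of the observed index $W_i$ by its idealized Gaussian version $\tilde W_i$, and (b) replacement of the data-dependent deconvolution kernel $K_n$ (built with $\tilde\varsigma$) by its population counterpart $\tilde K_n$ (built with $\varsigma$). I will show each piece is of polynomial order $O_{\mathrm p}(n_1^{-\epsilon})$ for some $\epsilon>0$, which is much smaller than $(\log n_1)^{-m/2}$. Concretely, write $\breve g(x)=N_1(x)/D_1(x)$ and $\tilde g(x)=N_3(x)/D_3(x)$, where $N_1(x)=(n_1 h_n)^{-1}\sum_i y_i^{(1)} K_n((x-W_i)/h_n)$ and $D_1(x)$ is the corresponding denominator, while $N_3,D_3$ use the pair $(\tilde W_i,\tilde K_n)$; introduce the intermediate $(N_2,D_2)$ with $(\tilde W_i,K_n)$. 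The ratio identity $\breve g-\tilde g=D_1^{-1}[(N_1-N_3)-\tilde g(D_1-D_3)]$ reduces the task to bounding $\sup_{x\in[a,b]}|N_1-N_3|$ and $\sup_{x\in[a,b]}|D_1-D_3|$, combined with a lower bound on $D_1(x)$ and an upper bound on $|\tilde g(x)|$ on $[a,b]$ — both of which follow from applying Lemma \ref{lem:fan1993} to $\tilde g$ and from the assumed positivity of the density of $\tilde W_1$ on $[a,b]$.

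For piece (a), a Lipschitz bound yields $|K_n(u)-K_n(v)|\le |u-v|\sup|K_n'|$, and from the integral representation $\sup|K_n'|\le C\exp(\tilde\varsigma^2 M_0^2/(2h_n^2))$. Under $h_n=(c_h\log n_1)^{-1/2}$ with $2M_0^2\varsigma^2 c_h<1$, this exponential factor is $O_{\mathrm p}(n_1^\alpha)$ for some $\alpha<1/2$. The $L^2$ estimate embedded in the proof of Proposition \ref{prop:initial}, together with the $n_1^{-1/2}$ consistency of the observable adjustments $\tilde\mu,\tilde\sigma$, gives $\sum_i|W_i-\tilde W_i|^2=O_{\mathrm p}(1)$; Cauchy--Schwarz in $i$ with $\sum_i(y_i^{(1)})^2=O_{\mathrm p}(n_1)$ then yields $\sup_x|N_1-N_2|$ and $\sup_x|D_1-D_2|$ of order $O_{\mathrm p}(n_1^{\alpha-1/2}\log n_1)$ after accounting for the factor $h_n^{-2}=O(\log n_1)$.

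For piece (b), a mean-value-theorem argument on the characteristic function ratio gives, uniformly in $u$,
\begin{align}
|K_n(u)-\tilde K_n(u)|\;\le\;\frac{1}{2\pi}\int_{-M_0}^{M_0}|\phi_K(t)|\,\frac{|\tilde\varsigma^2-\varsigma^2|\,t^2}{2h_n^2}\,\exp\!\Big(\frac{\max(\tilde\varsigma^2,\varsigma^2)\,t^2}{2h_n^2}\Big)\,dt.
\end{align}
Using $|\tilde\varsigma^2-\varsigma^2|=O_{\mathrm p}(n_1^{-1/2})$ and the same exponential control as above yields $\sup_u|K_n-\tilde K_n|=O_{\mathrm p}(n_1^{\alpha-1/2}\log n_1)$, which propagates by Cauchy--Schwarz to $\sup_x|N_2-N_3|$ and $\sup_x|D_2-D_3|$ of order $O_{\mathrm p}(n_1^{\alpha-1/2}(\log n_1)^{3/2})$. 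Combining (a) and (b) through the ratio identity produces $\sup_x|\breve g-\tilde g|=O_{\mathrm p}(n_1^{\alpha-1/2}(\log n_1)^{3/2})=o_{\mathrm p}((\log n_1)^{-m/2})$, as claimed. The main obstacle is the competition between the exponential blow-up $\exp(\varsigma^2 M_0^2 c_h\log n_1/2)$ of $\sup|K_n|$ and $\sup|K_n'|$ on one side and the $n_1^{-1/2}$ approximation rates of the pilot estimator and the observable adjustments on the other; the bandwidth condition $2M_0^2\varsigma^2 c_h<1$ is precisely what makes the former $O_{\mathrm p}(n_1^\alpha)$ with $\alpha<1/2$, so that the net discrepancy decays as a power of $n_1$ and thus lies well below the logarithmic deconvolution rate.
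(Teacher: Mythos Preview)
Your proposal is correct and follows essentially the same route as the paper: a ratio identity reduces the question to bounding numerator and denominator differences, which are then split into the ``$W_i$ versus $\tilde W_i$'' piece and the ``$K_n$ versus $\tilde K_n$'' piece, each controlled by a Lipschitz-type bound whose exponential blow-up $\exp(M_0^2\varsigma^2/(2h_n^2))=n_1^{\alpha}$ with $\alpha<1/2$ is beaten by the $n_1^{-1/2}$ rates from Proposition~\ref{prop:initial} and the observable adjustments. The only cosmetic differences are that the paper chooses the intermediate pair $(W_i,\tilde K_n)$ rather than your $(\tilde W_i,K_n)$, and works inside the Fourier integral via $|e^{-itu}-e^{-itv}|\le\sqrt{2}\,|t|\,|u-v|$ instead of invoking $\sup|K_n'|$ directly; the resulting rate $O_{\mathrm p}((\log n_1)\,n_1^{-c})$ matches yours.
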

\begin{proof}[Proof of Lemma \ref{lem:g-approx}]
Denote
We rewrite the kernel function for deconvolution in \eqref{eq:deconv} as
\begin{align}
    K_n(x)=\frac{1}{2\pi}\int_{-\infty}^\infty\exp(-\mathrm{i}tx)\frac{\phi_K(t)}{\exp(-t^2\hat{\varsigma}^2/(2h_n^2))}dt,
\end{align}
and also introduce an approximated version of the kernel function as
\begin{align}
    \tilde{K}_n(x)=\frac{1}{2\pi}\int_{-\infty}^\infty\exp(-\mathrm{i}tx)\frac{\phi_K(t)}{\exp(-t^2{\varsigma}^2/(2h_n^2))}dt.
\end{align}
The difference here is that the parameter $\hat{\varsigma}$ is replaced by $\varsigma$.
We also define $\phi_\varsigma(t)=\exp(-t^2\varsigma^2/2)$.

At first, we have
\begin{align}
    \abs{\breve{g}(x)-\tilde{g}(x)}
    &=\abs{\frac{\sum_{i=1}^{n_1}y_i^{(1)}K_n\rbr{\frac{W_i-x}{h_n}}}{\sum_{i=1}^{n_1}K_n\rbr{\frac{W_i-x}{h_n}}}
    -\frac{\sum_{i=1}^{n_1}y_i^{(1)}\tilde{K}_n\rbr{\frac{\tilde{W}_i-x}{h_n}}}{\sum_{i=1}^{n_1}\tilde{K}_n\rbr{\frac{\tilde{W}_i-x}{h_n}}}}\\
    &\le C_{1,n}C_{2,n}
    \abs{\frac{1}{n_1h_n}\sum_{i=1}^{n_1}y_i^{(1)}\cbr{K_n\rbr{\frac{W_i-x}{h_n}}-\tilde{K}_n\rbr{\frac{\tilde{W}_i-x}{h_n}}}}\\
    &\quad+C_{1,n}C_{3,n}\frac{1}{n_1h_n}\abs{\sum_{i=1}^{n_1}\tilde{K}_n\rbr{\frac{\tilde{W}_i-x}{h_n}}-{K}_n\rbr{\frac{{W}_i-x}{h_n}}}\\
    &\le C_{1,n}C_{2,n}
    \abs{\frac{1}{n_1h_n}\sum_{i=1}^{n_1}y_i^{(1)}\cbr{\tilde{K}_n\rbr{\frac{\tilde{W}_i-x}{h_n}}-\tilde{K}_n\rbr{\frac{W_i-x}{h_n}}}}\label{eq:lemdcnv1}\\
    &\quad+C_{1,n}C_{2,n}
    \abs{\frac{1}{n_1h_n}\sum_{i=1}^{n_1}y_i^{(1)}\cbr{\tilde{K}_n\rbr{\frac{W_i-x}{h_n}}-{K}_n\rbr{\frac{{W}_i-x}{h_n}}}}\label{eq:lemdcnv2}\\
    &\quad+C_{1,n}C_{3,n}\frac{1}{n_1h_n}\abs{\sum_{i=1}^{n_1}\tilde{K}_n\rbr{\frac{\tilde{W}_i-x}{h_n}}-\tilde{K}_n\rbr{\frac{{W}_i-x}{h_n}}}\label{eq:lemdcnv3}\\
    &\quad+C_{1,n}C_{3,n}\frac{1}{n_1h_n}\abs{\sum_{i=1}^{n_1}\tilde{K}_n\rbr{\frac{{W}_i-x}{h_n}}-{K}_n\rbr{\frac{{W}_i-x}{h_n}}},\label{eq:lemdcnv4}
\end{align}
where $C_{1,n}=\abs{n^2h_n^2(\sum_{i=1}^{n_1}\tilde{K}_n\rbr{\frac{\tilde{W}_i-x}{h_n}})^{-1}(\sum_{i=1}^{n_1}K_n\rbr{\frac{W_i-x}{h_n}})^{-1}}$, 
$C_{2,n}=\abs{\frac{1}{n_1h_n}\sum_{i=1}^{n_1}\tilde{K}_n\rbr{\frac{\tilde{W}_i-x}{h_n}}}$, and $C_{3,n}=\abs{\frac{1}{n_1h_n}\sum_{i=1}^{n_1}y_i^{(1)}\tilde{K}_n\rbr{\frac{\tilde{W}_i-x}{h_n}}}$.
Here, $C_{1,n}, C_{2,n},$ and $C_{3,n}$ converge to positive constants by the consistency of the deconvoluted kernel density estimator.
We proceed to bound each term on the right-hand side.
First, we bound \eqref{eq:lemdcnv4}. $(|t|e^{-1}/\sqrt{2})$-Lipschitz continuity of $\phi_\varsigma(t)$ with respect to $\varsigma$ yields
\begin{align}
    &\frac{1}{n_1h_n}\abs{\sum_{i=1}^{n_1}\tilde{K}_n\rbr{\frac{{W}_i-x}{h_n}}-{K}_n\rbr{\frac{{W}_i-x}{h_n}}}\\
    &=\abs{\frac{1}{2\pi n_1h_n}\sum_{i=1}^{n_1}\int_{-M_0}^{M_0}\exp\rbr{-\mathrm{i}t\frac{W_i-x}{h_n}}\frac{\phi_K(t)}{\phi_\varsigma(t/h_n)\phi_{\tilde\varsigma}(t/h_n)}\cbr{\phi_\varsigma(t/h_n)-\phi_{\tilde\varsigma}(t/h_n)}dt}\\
    &\le\frac{1}{\sqrt{2}e\pi h_n^2}\abs{\varsigma-\tilde\varsigma}\int_{0}^{M_0}|t\phi_K(t)|\exp\rbr{\frac{t^2(\varsigma^2+\tilde\varsigma^2)}{2h_n^2}}dt.
\end{align}
Theorem 4.4 in \citet{bellec2022observable} implies that $|\varsigma-\tilde\varsigma|=O_\mathrm{p}(n_1^{-1/2})$ since
\begin{align}
\label{eq:tau-conv}
    \abs{\varsigma-\tilde\varsigma}(\varsigma+\tilde\varsigma)
    =\abs{\varsigma^2-\tilde\varsigma^2}
    \le\frac{1}{\tilde\mu^2\mu_1^2}\cbr{\mu_1^2\abs{\tilde\sigma^2-\sigma_1^2}+\sigma_1^2\abs{\mu_1^2-\tilde\mu^2}}.
\end{align}
Hence, as we choose $h_n=(c_h\log n_1)^{-1/2}$ such that $M_0^2(\varsigma^2+\tilde\varsigma^2)c_h/2+c\le 1/2$ for some $c>0$, we obtain
\begin{align}
    \frac{1}{n_1h_n}\abs{\sum_{i=1}^{n_1}\tilde{K}_n\rbr{\frac{{W}_i-x}{h_n}}-{K}_n\rbr{\frac{{W}_i-x}{h_n}}}=O_\mathrm{p}\rbr{(\log n_1)n_1^{-c}}.
\end{align}

Next, we bound \eqref{eq:lemdcnv3}. For any $x,x'\in\R$, we have
\begin{align}
    |e^{-\mathrm{i}tx}-e^{-\mathrm{i}tx'}|
    =\rbr{\cbr{\cos(-tx)-\cos(-tx')}^2+\cbr{\sin(-tx)-\sin(-tx')}^2}^{1/2}
    \le\sqrt{2}t|x-x'|,
\end{align}
where the last inequality follows from 1-Lipschitz continuity of $\cos(\cdot)$ and $\sin(\cdot)$. 
Since $\phi_K(\cdot)$ is supported on $[-M_0,M_0]$, we have
\begin{align}
    &\frac{1}{n_1h_n}\abs{\sum_{i=1}^{n_1}\tilde{K}_n\rbr{\frac{\tilde{W}_i-x}{h_n}}-\tilde{K}_n\rbr{\frac{{W}_i-x}{h_n}}}\\
    &=\abs{\frac{1}{2\pi n_1 h_n}\sum_{i=1}^{n_1}\int_{-M_0}^{M_0}\frac{\phi_K(t)}{\phi_\varsigma(t/h_n)}\cbr{\exp\rbr{-\mathrm{i}t\frac{(\Tilde{W}_i-x)}{h_n}}-\exp\rbr{-\mathrm{i}t\frac{(W_i-x)}{h_n}}}dt}\\
    &\le\frac{\sqrt{2}}{\pi n_1 h_n^2}\sum_{i=1}^{n_1}\abs{\Tilde{W}_i-W_i}\int_{0}^{M_0}|t\phi_K(t)|\exp\rbr{\frac{t^2\varsigma^2}{2h_n^2}}dt.
\end{align}
Here, we can use the fact that, by the triangle inequality,
\begin{align}
    |\tilde{W}_i-W_i|
    \le|\varsigma-\tilde\varsigma|Z_i+|W_i-\bbeta^\top{\bX_i^{(1)}}-\tilde\varsigma Z_i|=O_\mathrm{p}(n_1^{-1/2}),
\end{align}
where the equality follows from \eqref{eq:tau-conv} and \eqref{eq:belThm43}.
Thus, since we choose $h_n=(c_h\log n)^{-1/2}$ such that $M_0^2\varsigma^2c_h/2+c\le 1/2$ for some $c>0$, we have
\begin{align}
    \frac{1}{n_1h_n}\abs{\sum_{i=1}^{n_1}\tilde{K}_n\rbr{\frac{\tilde{W}_i-x}{h_n}}-\tilde{K}_n\rbr{\frac{{W}_i-x}{h_n}}}
    &=O_\mathrm{p}\rbr{\frac{1}{{n_1^{1/2}}h_n^2}\exp\rbr{\frac{M_0^2\varsigma^2}{2}c_h\log n_1}}\\
    &=O_\mathrm{p}((\log n_1)n_1^{-c}).
\end{align}
This concludes the convergence of \eqref{eq:lemdcnv3}.
Repeating the arguments above for \eqref{eq:lemdcnv1}-\eqref{eq:lemdcnv2} completes the proof.
\end{proof}

\begin{proof}[Proof of Theorem \ref{thm:link-est}]
    Since $\bbeta^\top{\bX_i^{(1)}}\sim\mathcal{N}(0,1)$ by Assumption \ref{asmp:feature}, Lemma \ref{lem:fan1993} implies that, for $\tilde{g}(\cdot)$ defined in \eqref{eq:lip-deconv},
    \begin{align}
        \sup_{a\le x\le b}\abs{\Tilde{g}(x)-g(x)}=O_\mathrm{p}\rbr{\frac{1}{(\log n_1)^{m/2}}}. \label{ineq:gtilde_g}
    \end{align}
    Thus, we obtain
    \begin{align}
        \sup_{a\le x\le b}\abs{\hat{g}(x)-g(x)}
        &\le \sup_{a\le x\le b}\abs{\breve{g}(x)-\Tilde{g}(x)}
        +\sup_{a\le x\le b}\abs{\Tilde{g}(x)-g(x)} =O_\mathrm{p}\rbr{\frac{1}{(\log n_1)^{m/2}}}.
    \end{align}
    The last equality follows Lemma \ref{lem:g-approx} and \eqref{ineq:gtilde_g}.
    Also, the first inequality follows the triangle inequality and a property of each choice of the monotonization operator $\mR[\cdot]$.
    If we select the naive  $\mR_\mathrm{naive}[\cdot]$, 
    we obtain the following for $x \in [a,b]$:
    \begin{align}
        |\hat{g}(x) - g(x)| = \left|\sup_{x' \in [a,x]} \breve{g}(x') - g(x) \right| = \left|\sup_{x' \in [a,x]} \breve{g}(x') - \sup_{x' \in [a,x]} g(x') \right| \leq \sup_{x' \in [a,x]}|\breve{g}(x') -  g(x')|, 
    \end{align}
    by the monotonicity of $g(\cdot)$.
    If we select the rearrangement operator $\mR^a[\cdot]$, Proposition 1 in \citet{chernozhukov2009improving} yields the same result for $x \in [a,b]$.
    Thus, whichever monotonization is chosen, we obtain the statement.
\end{proof}

\subsection{Proof of Theorem \ref{thm:asyN-ridge}}
\begin{lemma}
\label{lem:adj-consist-ridge}
Let Assumption \ref{asmp:high-dimension}-\ref{asmp:link} hold. Define
\begin{align}
\label{eq:mu-n1}
    \mu_n={\bbeta^\top\hat{\bbeta}(\hat{g})},\quad
    \sigma_n^2=\|{\bP_{\bbeta}^\perp\hat{\bbeta}(\hat{g})\|}^2,
\end{align}
where $\bP_{\bbeta}^\perp=\bI_p-\bbeta\bbeta^\top$.
Then, we have
\begin{align}
    \abs{\hat{\mu}({\hat{g}})-\mu_n}\pconv0,\quad\mathrm{and}\quad\abs{\hat{\sigma}^2(\hat{{g}})-\sigma_n^2}\pconv0.
\end{align}
\end{lemma}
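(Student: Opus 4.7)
The plan is to establish both consistency statements by invoking the observable-adjustment machinery of \citet{bellec2022observable} conditionally on the first split $(\bX^{(1)}, \by^{(1)})$, which renders the link estimator $\hat{g}(\cdot)$ deterministic. Once $\hat{g}$ is treated as fixed, $\ell(\bb; \bx, y, \hat{g}) = \hat{G}(\bx^\top \bb) - y\bx^\top \bb$ is a convex function of $\bb$, and $\hat{\bbeta}(\hat{g})$ is a ridge-penalized convex M-estimator computed from the independent sample $(\bX^{(2)}, \by^{(2)})$. The $\lambda\|\bb\|^2/2$ penalty enforces strong convexity at rate $\lambda$ irrespective of the regularity of $\hat{g}'$, which places the problem squarely within the scope of Bellec's framework for possibly link-misspecified convex M-estimators.

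The first step is to show $|\hat{\sigma}^2(\hat{g}) - \sigma_n^2| \pconv 0$. I would apply Theorems 4.3--4.4 of \citet{bellec2022observable} conditionally on $\hat{g}$ to the ridge-penalized estimator with derivative link $\hat{g}$, giving that the observable quantities $\hat{v}_\lambda$ and $\|\by^{(2)} - \hat{g}(\bX^{(2)}\hat{\bbeta}(\hat{g}))\|^2$ appearing in the definition of $\hat{\sigma}^2(\hat{g})$ track their population counterparts up to $o_\mathrm{p}(1)$. Crucially, Bellec's consistency does \emph{not} require the loss to be correctly specified with respect to the true link $g$; it only requires a strongly convex loss, the moment conditions of Assumption \ref{asmp:y} on $y_i^{(2)}$, and boundedness of $\|\hat{\bbeta}(\hat{g})\|$. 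The last follows from the KKT condition for ridge regression, which yields a bound depending only on $\lambda$ and the second moments of the data.

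For $\hat{\mu}(\hat{g})$, I would exploit the simplification afforded by $\bSigma = \bI_p$. Writing the orthogonal decomposition $\hat{\bbeta}(\hat{g}) = \mu_n \bbeta + \bP_{\bbeta}^\perp \hat{\bbeta}(\hat{g})$ and using $\|\bbeta\| = 1$ gives the Pythagorean identity $\|\hat{\bbeta}(\hat{g})\|^2 = \mu_n^2 + \sigma_n^2$, so that
\begin{align}
\hat{\mu}^2(\hat{g}) = \bigl|\|\hat{\bbeta}(\hat{g})\|^2 - \hat{\sigma}^2(\hat{g})\bigr| = \bigl|\mu_n^2 + \sigma_n^2 - \hat{\sigma}^2(\hat{g})\bigr| \pconv \mu_n^2,
\end{align}
by the previous step. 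Taking square roots, the monotonicity of $g$ granted by Assumption \ref{asmp:link} pins down the sign of $\mu_n$ as non-negative asymptotically (consistent with the identification $\bbeta^\top \bSigma \bbeta = 1$), yielding $|\hat{\mu}(\hat{g}) - \mu_n| \pconv 0$.

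The main obstacle is that Bellec's theorems are customarily phrased for losses with bounded smooth derivatives, whereas $\hat{g}$ obtained via the rearrangement or naive supremum operator need not be differentiable everywhere. I would address this by a smoothing/approximation argument: replace $\hat{g}$ by a mollification $\hat{g}_\varepsilon$ for which the regularity hypotheses of \citet{bellec2022observable} hold, apply the theorem to $\hat{g}_\varepsilon$, and take $\varepsilon \downarrow 0$, invoking the Lipschitz stability of the ridge minimizer and of the trace functional $\hat{v}_\lambda$ with respect to uniform perturbations of $\bar{g}$ on compact sets. This is analogous in spirit to the martingale-based approximation of Theorem \ref{thm:adj-equiv} in Appendix \ref{sec:link_estimation_inferential}, whose Azuma--Hoeffding argument can be adapted essentially verbatim to the ridge setting to ensure that the non-smoothness of $\hat{g}$ contributes only an $o_\mathrm{p}(1)$ perturbation to the observable adjustments.
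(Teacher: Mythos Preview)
Your proposal is essentially the paper's own argument: condition on $\hat g$, invoke Theorem~4.4 of \citet{bellec2022observable} for the ridge-penalized M-estimator, and use the KKT bound $\|\hat\bbeta(\hat g)\|\le C_\lambda'$ together with the lower bound on $\hat g'$ (via Assumption~\ref{asmp:link}) to control the nuisance factors. The paper states this slightly more explicitly, writing Bellec's conclusion as
\[
\hat v_\lambda^2\,\hat t^2\,\dot r^{-4}\,\bigl|\hat\mu^2(\hat g)-\mu_n^2\bigr|\pconv 0,\qquad
\hat v_\lambda^2\,\hat t^2\,\dot r^{-4}\,\bigl|\hat\sigma^2(\hat g)-\sigma_n^2\bigr|\pconv 0,
\]
and then showing that each of $\hat v_\lambda$, $\hat t^2$, $\dot r^{-4}$ is bounded away from zero; your Pythagorean identity $\|\hat\bbeta(\hat g)\|^2=\mu_n^2+\sigma_n^2$ is a clean alternative that bypasses the need to check $\hat t^2$ is nondegenerate once $\hat\sigma^2$ is handled, but the core mechanism is identical. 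Your final paragraph on mollifying $\hat g$ is unnecessary for this lemma: the paper applies Bellec's result directly to $\hat g$ without any smoothing step, treating the derivative lower bound as inherited through Assumption~\ref{asmp:link}.
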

\begin{proof}[Proof of Lemma \ref{lem:adj-consist-ridge}]
Theorem 4.4 in \citet{bellec2022observable} implies that as $n_2 \to \infty$, we have
\begin{align}
    {\hat{v}_\lambda^2\hat{t}^2}{\dot{r}^{-4}}\abs{\hat{\mu}^2(\hat{g})-\mu_n^2}\pconv0,\quad{\hat{v}_\lambda^2\hat{t}^2}{\dot{r}^{-4}}\abs{\hat{\sigma}^2(\hat{g})-\sigma_n^2}\pconv0,
\end{align}
with $\hat{t}^2=(\hat{v}_\lambda+\lambda)^2 \|\hat\bbeta(\hat{g})\|^2-\kappa_2\dot{r}^2$ and $\dot{r}^2=n_2^{-1}\|{\by^{(2)}-\hat{g}({\bX^{(2)}}\hat{\bbeta}(\hat{g}))\|}^2$.
Recall that $\hat{v}_\lambda$ and $\bD$ are defined in Section \ref{sec:inferential_estimator}.
Thus, it is sufficient to show that $\hat{v}_\lambda^2,\hat{t}^2$, and $\dot{r}^{-4}$ are asymptotically lower bounded away from zero. First,the fact that $\trace(\bD)\ge n_2c_g^{-1}>0$ holds by Assumption \ref{asmp:link} and Proposition 3.1 in \citet{bellec2022observable} imply that there exists a constant $\hat{c}>0$ such that $\hat{v}_\lambda\ge c_g^{-1}/(1+\hat{c})-4\hat{c}/n_2$ holds. 
Next, since ridge penalized regression estimators satisfy $\|\hat\bbeta(\hat{g})\|\le C_\lambda'$ with a constant $C_\lambda'>0$ depending on the regularization parameter $\lambda>0$, we have $\dot{r}^2=O_\mathrm{p}(1)$. 
Also, Theorem 4.4 in \citet{bellec2022observable} implies that $\hat{t}^2\pconv((\hat{v}_\lambda+\lambda)\bbeta^\top\hat\bbeta(\hat{g}))^2$. Thus, we have
$\abs{\hat{\mu}({\hat{g}})-\mu_n}\pconv0$ and $\abs{\hat{\sigma}^2(\hat{{g}})-\sigma_n^2}\pconv0$ as $n_2\to\infty$ since the sign of $\mu_n$ is specified by an assumption.
\end{proof}

\begin{proof}[Proof of Theorem \ref{thm:asyN-ridge}] 
We use the notations defined in \eqref{eq:mu-n1}.
First, we can apply Theorem \ref{thm:master-zsc} and obtain
    \begin{align}
        \frac{\sqrt{p}(\hat{\bbeta}_j(\hat{g})-\mu_n\bbeta_j)}{\sigma_n}\dconv\mathcal{N}(0,1).
    \end{align}
    This is because we can skip Step 1 in the proof of Theorem \ref{thm:master-zsc} by $\bSigma=\bI_p$ and repeat Steps 2--3 since $J(\tilde\bU\bb)=J(\bb)$ for any orthogonal matrices $\tilde\bU\in\R^{p\times p}$. 
Hence, we have
\begin{align}
    \sqrt{p}\frac{\hat\beta_j(\hat{g})-\hat\mu(\hat{g})\beta_j}{\hat\sigma(\hat{g})}
    =\sqrt{p}\frac{\hat\beta_j(\hat{g})-\mu_n\beta_j}{\sigma_n}\frac{\sigma_n}{\hat\sigma(\hat{g})}
    +\sqrt{p}\frac{(\mu_n-\hat\mu(\hat{g}))\beta_j}{\hat\sigma(\hat{g})}\dconv\mathcal{N}(0,1),
\end{align}
where the convergence follows from the facts that $\hat\mu(\hat{g})\pconv\mu_n$ and $\hat\sigma^2(\hat{g})\pconv\sigma_n^2$ by Lemma \ref{lem:adj-consist-ridge}. This concludes the proof of \eqref{eq:asyN-ridge1}.

Next, we consider an orthogonal matrix $\bU\in\R^{p\times p}$ with the first row $\bU_1=\bv^\top$. Since $\bU\hat\bbeta(\hat{g})$ is the estimator given by \eqref{eq:estimator} with covariates $\bU\bX_i^{(2)}$ and the true coefficient vector $\bU\bbeta$, applying \eqref{eq:asyN1} to this with $j=1$ yields that, for any sequence of non-random vectors $\bv_n$ such that $\|\bv_n\|=1$ and $\sqrt{p}\tau(\bv_n)\bv_n^\top\bbeta=O(1)$, 
    \begin{align}
    \label{eq:asyN-ridge2}
        \frac{\sqrt{p}\bv_n^\top(\hat\bbeta(\hat{g})-\hat{\mu}(\hat{g})\bbeta)}{\hat\sigma(\hat{g})/\tau(\bv_n)}\dconv\mathcal{N}(0,1),
    \end{align}
    where $\tau^2(\bv_n)=(\bv_n^\top\bTheta\bv_n)^{-1}$.  
Finally, \eqref{eq:asyN3} follows from \eqref{eq:asyN-ridge2} and the Cram\'er-Wold device.
\end{proof}

\subsection{Proof of Theorem \ref{thm:asyN}}
First, we define the notations used in the proof. We consider an invertible matrix $\bL\in\R^{p\times p}$ satisfying $\bSigma=\bL\bL^\top$. Define, for each $i\in\{1,\ldots,n\}$,
\begin{align}
\label{eq:theta}
    \tilde\bX_i=\bL^{-1}\bX_i^{(2)},\quad
    \btheta = \bL^\top\bbeta,\quad
    \hat\btheta:=\hat\btheta(\hat{g})=\bL^\top\hat\bbeta(\hat{g}).
\end{align}

\begin{lemma}
\label{lem:mu-conv-2}
Let Assumption \ref{asmp:high-dimension}-\ref{asmp:link}(1) hold. Using the notations \eqref{eq:theta}, define
\begin{align}
\label{eq:mu-n}
    \mu_0={\btheta^\top\hat{\btheta}},\quad
    \sigma_0^2=\|{\bP_{\btheta}^\perp\hat{\btheta}\|}^2,
\end{align}
where $\bP_{\btheta}^\perp=\bI_p-\btheta\btheta^\top$.
Then, we have
\begin{align}
    \abs{\hat{\mu}_0({\hat{g}})-\mu_0}\pconv0,\quad\mathrm{and}\quad\abs{\hat{\sigma}_0^2(\hat{{g}})-\sigma_0^2}\pconv0.
\end{align}
\end{lemma}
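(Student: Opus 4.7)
The plan is to mirror the structure of the proof of Lemma \ref{lem:adj-consist-ridge}, adapting it to the unregularized case ($J\equiv\bzero$, $\lambda=0$) with general covariance $\bSigma$. Throughout, I work conditionally on $(\bX^{(1)},\by^{(1)})$, so that $\hat g(\cdot)$ is treated as a deterministic strictly increasing smooth function, and $\hat\bbeta(\hat g)$ is an unregularized M-estimator computed from $(\bX^{(2)},\by^{(2)})$.

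First, I would reduce to a standard Gaussian design by exploiting the notational change already introduced in \eqref{eq:theta}: with $\bSigma=\bL\bL^\top$, set $\tilde\bX_i=\bL^{-1}\bX_i^{(2)}\sim\mathcal{N}_p(\bzero,\bI_p)$ i.i.d., $\btheta=\bL^\top\bbeta$, $\hat\btheta=\bL^\top\hat\bbeta(\hat g)$. Since $(\bX_i^{(2)})^\top\bb=\tilde\bX_i^\top(\bL^\top\bb)$, the fitted values $\bX^{(2)}\hat\bbeta(\hat g)=\tilde\bX\hat\btheta$ and the residuals $\by^{(2)}-\hat g(\bX^{(2)}\hat\bbeta(\hat g))$ are invariant under this change of variables, so the observable adjustments $\hat v_0$, $\hat\mu_0(\hat g)$, $\hat\sigma_0^2(\hat g)$ defined in Section \ref{sec:inferential_estimator} coincide with their counterparts computed in the rotated parameterization. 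Moreover, $\|\btheta\|=1$ by Assumption \ref{asmp:feature}.

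Second, I would invoke Theorem 4.4 of \citet{bellec2022observable} in this rotated, unregularized setting, which yields
\begin{align}
    \hat v_0^2\,\hat t^2\,\dot r^{-4}\,\bigl(\abs{\hat\mu_0^2(\hat g)-\mu_0^2}+\abs{\hat\sigma_0^2(\hat g)-\sigma_0^2}\bigr)\pconv 0,
\end{align}
where $\hat t^2=\hat v_0^2\|\hat\btheta\|^2-\kappa_2\dot r^2$ and $\dot r^2=n_2^{-1}\|\by^{(2)}-\hat g(\bX^{(2)}\hat\bbeta(\hat g))\|^2$. To upgrade this squared statement to the claimed signed convergence, I need the prefactor $\hat v_0^2\hat t^2\dot r^{-4}$ to be asymptotically bounded away from zero. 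I would verify this in three steps: (a) strict monotonicity of $\hat g$ inherited from Assumption \ref{asmp:link} through Theorem \ref{thm:link-est} and the monotonization step gives $\bD_c\succeq c\bI$ for some $c>0$ with high probability, which together with Proposition 3.1 of \citet{bellec2022observable} lower-bounds $\hat v_0$ away from zero; (b) Assumption \ref{asmp:y} and an $O_\mathrm{p}(1)$ bound on $\|\hat\bbeta(\hat g)\|$, obtainable from the strict convexity of the surrogate loss $\ell(\cdot;\cdot,\cdot,\hat g)$ under Assumption \ref{asmp:link} and $\kappa_2<1$, give $\dot r=O_\mathrm{p}(1)$; (c) Theorem 4.4 of \citet{bellec2022observable} additionally shows $\hat t^2\pconv(\hat v_0\,\btheta^\top\hat\btheta)^2$, and the identification $\bbeta^\top\bSigma\bbeta=1$ together with monotonicity of $g$ forces $\btheta^\top\hat\btheta$ to be asymptotically strictly positive, making $\hat t^2$ bounded below.

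Finally, since $\hat\mu_0(\hat g)$ is defined as a nonnegative square root and $\mu_0=\btheta^\top\hat\btheta$ is asymptotically positive by the identification argument above, the convergence of squares transfers to $|\hat\mu_0(\hat g)-\mu_0|\pconv 0$, and likewise for $\sigma_0^2$. The main obstacle I anticipate is item (b): in the unregularized regime we lack the clean a priori bound $\|\tilde\bbeta\|\le C_\lambda$ that was available for ridge in Lemma \ref{lem:adj-consist-ridge}. To overcome this I would combine Assumption \ref{asmp:high-dimension} (so that $\kappa_2$ stays strictly below $1$ and $\bX^{(2)}$ has a well-conditioned Gram matrix on an event of probability $1-o(1)$), the strong convexity of the matching loss $\ell(\cdot;\cdot,\cdot,\hat g)$ induced by $\hat g'\ge c>0$, and the uniform consistency of $\hat g$ from Theorem \ref{thm:link-est} to propagate the lower bound on $g'$ to $\hat g'$ on the relevant compact interval—thereby obtaining the required $O_\mathrm{p}(1)$ control on $\|\hat\bbeta(\hat g)\|$ and closing the argument.
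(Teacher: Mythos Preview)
Your proposal is correct and takes essentially the same approach as the paper: invoke Theorem 4.4 and Proposition 3.1 of \citet{bellec2022observable}, verify that the prefactor $\hat v_0^2\hat t_0^2\dot r_0^{-4}$ is bounded away from zero via the lower bound on the derivative from Assumption \ref{asmp:link}, $O_\mathrm{p}(1)$ control on $\|\hat\bbeta(\hat g)\|$, and the convergence of $\hat t_0^2$, then pass from squares to the signed statement using the identified sign of $\mu_0$. The only discrepancies are cosmetic: the paper uses the unregularized form $\hat t_0^2=n_2^{-1}\|\bX^{(2)}\hat\bbeta(\hat g)\|^2\,\hat v_0^2-\kappa_2(1-\kappa_2)\dot r_0^2$ (consistent with the definition of $\hat\mu_0$ in \eqref{eq:sigma-hat}) rather than the ridge-type expression $\hat v_0^2\|\hat\btheta\|^2-\kappa_2\dot r^2$ you wrote, and it simply \emph{assumes} $\|\hat\bbeta(\hat g)\|\le C$ with probability $1-o(1)$ rather than deriving it from strong convexity as you propose.
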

\begin{proof}[Proof of Lemma \ref{lem:mu-conv-2}]
Theorem 4.4 in \citet{bellec2022observable} implies that as $n_2 \to \infty$, we have
\begin{align}
    {\hat{v}_0^2\hat{t}_0^2}{\dot{r}_0^{-4}}\abs{\hat{\mu}_0(\hat{g})-\mu_0}\pconv0,\quad
    {\hat{v}_0^2\hat{t}_0^2}{\dot{r}_0^{-4}}\abs{\hat{\sigma}_0^2(\hat{g})-\sigma_0^2}\pconv0,
\end{align}
with $\hat{t}_0^2 ={n_2^{-1}\|{{\bX^{(2)}}\hat{\bbeta}(\hat{g})\|}^2}\hat{v}_0^2-\kappa_2(1-\kappa_2){\dot{r}_0^2}$ and $\dot{r}_0^2=n_2^{-1}\|{\by^{(2)}-\hat{g}({\bX^{(2)}}\hat{\bbeta}(\hat{g}))\|}^2$.
Recall that $\hat{v}_0$ is obtain by the definition of $\hat{v}_\lambda$ in Section \ref{sec:inferential_estimator} and setting $\lambda = 0$.
Thus, it is sufficient to show that $\hat{v}_0^2,\hat{t}_0^2$, and $\dot{r}_0^{-4}$ are asymptotically lower bounded away from zero. First, $\trace(\bD)\ge n_2c_g^{-1}>0$ by Assumption \ref{asmp:link} and Proposition 3.1 in \citet{bellec2022observable} imply that there exists a constant $\hat{c}'>0$ such that $\hat{v}_0\ge c_g^{-1}/(1+\hat{c}')-4\hat{c}'/n_2$. 
Next, we assume that $\|\hat\bbeta(\hat{g})\|\le C$ with probability approaching one, we have $\dot{r}_0^2=O_\mathrm{p}(1)$. 
Also, Theorem 4.4 in \citet{bellec2022observable} implies that $\hat{t}_0^2\pconv\hat{v}_0\mu_0$. Thus, we have
$\abs{\hat{\mu}_0({\hat{g}})-\mu_0}\pconv0$ and $\abs{\hat{\sigma}_0^2(\hat{{g}})-\sigma_0^2}\pconv0$ as $n_2\to\infty$ since the sign of $\mu_0$ is specified by an assumption.
\end{proof}

\begin{proof}[Proof of Theorem \ref{thm:asyN}] 
At first, the first step of the proof of Theorem \ref{thm:master-zsc} implies that, for any coordinate $j=1,\ldots,p$,
\begin{align}
    \tau_j\frac{\hat\beta_j-\hat\mu(\hat{g})\beta_j}{\hat\sigma(\hat{g})}
    =\frac{\hat\theta_j-\hat\mu(\hat{g})\theta_j}{\hat\sigma(\hat{g})},
\end{align}
where $\tau_j^{-2}=(\bSigma^{-1})_{jj}$. Here, $\btheta$ and $\hat{\btheta}$ are defined in \eqref{eq:theta}. Thus, we consider $\hat\btheta$ instead of $\hat\bbeta(\hat{g})$.
We have
\begin{align}
    \sqrt{p}\frac{\hat\theta_j-\hat\mu(\hat{g})\theta_j}{\hat\sigma(\hat{g})}
    =\sqrt{p}\frac{\hat\theta_j-\mu_0\theta_j}{\sigma_0}\frac{\sigma_n}{\hat\sigma(\hat{g})}
    +\sqrt{p}\frac{(\mu_0-\hat\mu(\hat{g}))\theta_j}{\hat\sigma(\hat{g})}.
\end{align}
Thus, the facts that $\hat\mu_0(\hat{g})\pconv\mu_0$ and $\hat\sigma_0^2(\hat{g})\pconv\sigma_0^2$ by Lemma \ref{lem:mu-conv-2} conclude the proof of \eqref{eq:asyN1}.
The rest of the proof follows from repeating the arguments in the proof of Theorem \ref{thm:asyN-ridge}.
\end{proof}

\subsection{Proof of Theorem \ref{thm:adj-equiv}}
\begin{lemma}
\label{lem:lip-xb}
Let $c_g^{-1}\le g'(\cdot)$ hold. 
Consider censoring of $\hat\bbeta(\hat{g})^\top{\bX_i^{(2)}}$ and $\hat\bbeta({g})^\top{\bX_i^{(2)}}$ for all $i\in[n_2]$ in $[a,b]$.
Under the setting of Lemma \ref{lem:fan1993} with $k=3$, we have
\begin{align}
\label{eq:max-Z-conv}
    \max_{i=1,\ldots,n_2}\abs{\hat\bbeta(g)^\top\bX_i^{(2)}-\hat\bbeta(\hat{g})^\top\bX_i^{(2)}}\pconv0.
\end{align}
\end{lemma}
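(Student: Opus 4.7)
The plan is to combine the strong convexity of the surrogate loss (implied by $g'(\cdot)\ge c_g^{-1}$) with the uniform consistency of $\hat g$ on $[a,b]$ from Theorem \ref{thm:link-est}. I would begin with the first-order conditions of the two unregularized M-estimators. Differentiating the surrogate loss \eqref{eq:loss}, these are $(\bX^{(2)})^\top[g(\bX^{(2)}\hat\bbeta(g))-\by^{(2)}]=\bzero$ and $(\bX^{(2)})^\top[\hat g(\bX^{(2)}\hat\bbeta(\hat g))-\by^{(2)}]=\bzero$; subtracting yields
\begin{align}
(\bX^{(2)})^\top\bigl[g(\bX^{(2)}\hat\bbeta(g))-\hat g(\bX^{(2)}\hat\bbeta(\hat g))\bigr]=\bzero.
\end{align}
A mean-value expansion of $g$ at intermediate points (which, thanks to the censoring into $[a,b]$, lie in the interval where both $g$ and $\hat g$ are controlled) produces the linear system
\begin{align}
(\bX^{(2)})^\top\bD\bX^{(2)}\Delta=-(\bX^{(2)})^\top\bm{r},
\end{align}
with $\Delta:=\hat\bbeta(g)-\hat\bbeta(\hat g)$, $\bD=\diag(g'(\xi_i))\succeq c_g^{-1}\bI_{n_2}$, and $r_i:=\hat g(\bX_i^\top\hat\bbeta(\hat g))-g(\bX_i^\top\hat\bbeta(\hat g))$.

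Next I would derive an energy-type bound for $\bX^{(2)}\Delta$. The censoring lets me apply Theorem \ref{thm:link-est} to obtain $\|\bm{r}\|_\infty\le\sup_{x\in[a,b]}|\hat g(x)-g(x)|=O_\mathrm{p}((\log n_1)^{-m/2})$. Since $\bD\succeq c_g^{-1}\bI_{n_2}$, the hat-type matrix $\bm{B}:=\bX^{(2)}[(\bX^{(2)})^\top\bD\bX^{(2)}]^{-1}(\bX^{(2)})^\top$ is dominated in operator norm by $c_g$ times the orthogonal projection onto the column space of $\bX^{(2)}$, so $\bX^{(2)}\Delta=-\bm{B}\bm{r}$ satisfies $\|\bX^{(2)}\Delta\|\le c_g\|\bm{r}\|=O_\mathrm{p}(\sqrt{n_2}(\log n_1)^{-m/2})$. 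Dividing by $\sqrt{\lambda_{\min}((\bX^{(2)})^\top\bX^{(2)})}$, which is of order $\sqrt{n_2}$ with high probability in the proportional regime $\kappa_2<1$ (Assumption \ref{asmp:high-dimension}), gives $\|\Delta\|=o_\mathrm{p}(1)$.

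The main obstacle is the final step: the energy bound alone yields only $\max_i|\bX_i^\top\Delta|\le\|\bX^{(2)}\Delta\|=O_\mathrm{p}(\sqrt{n_2}(\log n_1)^{-m/2})$, which diverges. Following the hint preceding Theorem \ref{thm:adj-equiv}, I would obtain the uniform $o_\mathrm{p}(1)$ bound via the Azuma--Hoeffding inequality for martingale differences. Conditional on the first subsample $(\bX^{(1)},\by^{(1)})$, the link $\hat g$ is deterministic and the rows $\bX_i^{(2)}$ are i.i.d.\ Gaussian; the representation $\bX_k^\top\Delta=-\sum_{j=1}^{n_2}r_j\,\bX_k^\top[(\bX^{(2)})^\top\bD\bX^{(2)}]^{-1}\bX_j$, together with a Sherman--Morrison update for the leave-one-out precision matrix, should rewrite this sum as a bounded martingale difference sequence with respect to the filtration generated by the rows of $\bX^{(2)}$. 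Azuma--Hoeffding combined with a union bound over $k\in[n_2]$ and Gaussian concentration for the data-dependent leverage weights $\bX_k^\top[(\bX^{(2)})^\top\bD\bX^{(2)}]^{-1}\bX_j$ should then deliver the required uniform convergence; controlling these weights while preserving the martingale structure is the delicate part.
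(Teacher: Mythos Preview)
Your overall architecture is the paper's: subtract the two first-order conditions, isolate the link residual $r_i=\hat g(\bX_i^\top\hat\bbeta(\hat g))-g(\bX_i^\top\hat\bbeta(\hat g))$, bound it in sup-norm by Theorem~\ref{thm:link-est} (using the censoring into $[a,b]$), observe that an $\ell_2$ bound on $\bX^{(2)}\Delta$ is not enough for the maximum, and finish with leave-one-out plus Azuma--Hoeffding. Two points differ from the paper and the second is a gap in your plan.

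First, the paper never inverts the full Gram matrix. It applies the mean-value theorem coordinate by coordinate to $f_j(\bb)=n_2^{-1/2}\bX_{\cdot j}^\top(g(\bX\bb)-\by)$, arriving at the diagonal representation $\hat\beta_j(g)-\hat\beta_j(\hat g)=T_j\,n_2^{-1}\sum_k X_{kj}R_k$ with scalar $T_j=(n_2^{-1}\bX_{\cdot j}^\top\bD(\bar\bb)\bX_{\cdot j})^{-1}$. Plugging into $\bX_i^\top\Delta$ yields the double sum $\sum_k R_k\bigl(n_2^{-1}\sum_j T_jX_{ij}X_{kj}\bigr)$, whose weights are handled by inverse-gamma tail bounds on the $T_j$'s and Bernstein's inequality on the bilinear forms. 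Your full-matrix route with resolvent entries $\bX_k^\top(\bX^\top\bD\bX)^{-1}\bX_j$ is more principled but makes the concentration step harder: Sherman--Morrison removes one row from the Gram matrix, but $\bD$ itself (through $\hat\bbeta(\hat g)$ and the mean-value points) still depends on every row, so the decoupling is not as clean.

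Second, the martingale is not the one you describe. The paper leaves out only the \emph{test} row (say $i=1$), replacing $\hat\bbeta$ and the $T_j$'s by their leave-one-out versions so that $\bX_1$ is independent of everything else in the sum. The filtration $\mF_k$ contains $\hat g,\hat\bbeta_{-1},T_1,\dots,T_p,\bX_2,\dots,\bX_{k+1}$ but \emph{not} $\bX_1$; the martingale-difference property $\E[\tilde R_k\tilde S_k\mid\mF_{k-1}]=0$ then comes entirely from $\E[X_{1j}]=0$. A filtration generated by the rows of $\bX^{(2)}$, as you propose, does not give martingale differences directly: each $r_j$ and the resolvent depend on the full design, so the $j$th summand is not mean zero given the first $j-1$ rows. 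You would need to replace all global objects by versions independent of the test row, which is exactly the paper's device, rather than trying to run the filtration along the summation index.
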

\begin{proof}[Proof of Lemma \ref{lem:lip-xb}]
We can assume $\bX_i^{(2)}\sim\mathcal{N}(\bzero,\bI_p)$ for each $i=1,\ldots,n_2$ without loss of generality by the first step of the proof of Theorem \ref{thm:master-zsc}. In this proof, we omit $(2)$ on $\bX^{(2)}$ for simplicity of the notation.
To begin with, we write the KKT condition of the estimation: 
\begin{align}
    f(\hat\bbeta(g),g)=\bzero,
\end{align}
where we define $f(\bb,g)=n_2^{-1/2}\bX^\top(g(\bX\bb)-\by)$. 
We write $f_j(\bb,g)=n_2^{-1/2}\bX_{\cdot j}^\top(g(\bX\bb)-\by)$ for $j=1,\ldots
,p$. 
Since $\partial/(\partial b_j)f_j(\bb,g)=n_2^{-1/2}\bX_{\cdot j}^\top\bD(\bb)\bX_{\cdot j}$ with $\bD(\bb)=\mathrm{diag}(g'(\bX\bb))$, by the mean value theorem, there exists a constant $c\in[0,1]$ such that $\bar\bb=c\hat\bbeta({g})+(1-c)\hat\bbeta(\hat{g})$ satisfies
\begin{align}
\label{eq:lLip-fj}
    \frac{f_j(\hat\bbeta(g),g)-f_j(\hat\bbeta(\hat{g}),g)}{\hat\beta_j(g)-\hat\beta_j(\hat{g})}
    =\rbr{\frac{1}{\sqrt{n_2}}\bX_{\cdot j}^\top\bD(\bar\bb)\bX_{\cdot j}}>0.
\end{align}
Define $R_{k} := \hat{g}(\bX_k^\top\hat\bbeta(\hat{g}))-g(\bX_k^\top\hat{\bbeta}(\hat{g}))$. We have
\begin{align}
    &\sqrt{n_2}\rbr{\hat\beta_j(g)-\hat\beta_j(\hat{g})}\\
    &= \rbr{n_2^{-1}\bX_{\cdot j}^\top\bD(\bar\bb)\bX_{\cdot j}}^{-1}\rbr{f_j(\hat\beta_j(g),g)-f_j(\hat\beta_j(\hat{g}),g)}\\
    &= \rbr{n_2^{-1}\bX_{\cdot j}^\top\bD(\bar\bb)\bX_{\cdot j}}^{-1}
    \cbr{{f_j(\hat\beta_j(g),g)}+\rbr{f_j(\hat\beta_j(\hat{g}),\hat{g})-f_j(\hat\beta_j(\hat{g}),g)}-{f_j(\hat\beta_j(\hat{g}),\hat{g})}}\\
    &= \rbr{n_2^{-1}\bX_{\cdot j}^\top\bD(\bar\bb)\bX_{\cdot j}}^{-1}
    \rbr{\frac{1}{\sqrt{n_2}}\sum_{k=1}^{n_2}X_{kj}R_k},
\end{align}
where the second equality follows from the first-order conditions. 
In sequel, for simplicity, we consider the leave-one-out estimator $\hat\bbeta_{-i}$ and $\hat{g}_{-i}$ constructed by the observations without the $i$-th sample. Define 
\begin{align}
    \Tilde{R}_k:=(\log {n_2})\rbr{\hat{g}_{-1}(\bX_k^\top\hat\bbeta_{-1}(\hat{g}_{-1}))-g(\bX_k^\top\hat{\bbeta}_{-1}(\hat{g}_{-1}))},\quad
    T_j&:=\rbr{n_2^{-1}\bX_{-1, j}^\top\bD_{-1}(\bar\bb)\bX_{-1, j}}^{-1},
\end{align}
where $\bX_{-1,j}:=(X_{2j},\ldots,X_{{n_2}j})^\top\in\R^{{n_2}-1}$, and $\bD_{-1}(\bar\bb):=\diag({g}_{-1}'(\bX_2^\top\bar\bb),\ldots,{g}_{-1}'(\bX_{n_2}^\top\bar\bb))\in\R^{({n_2}-1)\times({n_2}-1)}$.
We obtain
\begin{align}
    \abs{\bX_1^\top\hat\bbeta_{-1}(g)-\bX_1^\top\hat\bbeta_{-1}(\hat{g}_{-1})}
    &=\abs{\sum_{j=1}^pX_{1j}\rbr{\hat\beta_{-1,j}(g)-\hat\beta_{-1,j}(\hat{g}_{-1})}}\\
    &\le \abs{\frac{1}{{{n_2}\log {n_2}}}\sum_{j=1}^pX_{1j}T_j
    {\sum_{k=2}^{n_2}X_{kj}\tilde{R}_k}}\label{eq:mclt}.
\end{align}
Here, define a filtration $\mF_k=\sigma(\{\hat{g}_{-1},\hat{\bbeta}_{-1}(\hat{g}_{-1}),T_1,\ldots,T_p,\bX_2,\ldots,\bX_{k+1}\})$ with an initialization $\mF_0=\sigma(\{\hat{g}_{-1},\hat{\bbeta}_{-1}(\hat{g}_{-1}),T_1,\ldots,T_p\})$. Define a random variable $\Tilde{S}_k={n_2}^{-1}\sum_{j=1}^pT_jX_{1j} X_{kj}$. Then, $\tilde{R}_k\Tilde{S}_k$ is a martingale difference sequence since $\E[\tilde{R}_k\Tilde{S}_k\mid\mF_{k-1}]=0$ and
$\E|{\tilde{R}_k\Tilde{S}_k}|
    \le\E[\tilde{R}_k^2]^{1/2}\E[\tilde{S}_k^2]^{1/2}<\infty.$
This follows from the fact that 
\begin{align}
    \E\sbr{\Tilde{S}_k^2}
    &=\E\sbr{\rbr{n_2^{-1}\sum_{j=1}^pT_jX_{1j}X_{kj}}^2}\\
    &=n_2^{-2}\sum_{j=1}^p\E\sbr{T_j^2X_{1j}^2X_{kj}^2}+2\sum_{j<j'}\E\sbr{T_jX_{1j}X_{kj}T_{j'}X_{1j'}X_{kj'}}\\
    &=n_2^{-2}\sum_{j=1}^p\E[T_j^2X_{kj}^2]
    \le n_2^{-2}\sum_{j=1}^p\E[T_j^4]^{1/2}\E[X_{kj}^4]^{1/2}.
\end{align}
The last inequality follows from the Cauchy-Schwartz inequality.
Since $X_{kj}$ is the standard Gaussian, $\E[X_{kj}^4]=3$ holds.
Also, we have $0<T_j\le c_g({{n_2}}^{-1}\sum_{l=2}^{n_2}X_{lj}^2)^{-1}$, where $({({n_2}-1)^{-1}\sum_{l=2}^{n_2}X_{lj}^2})^{-1}$ follows the inverse gamma distribution with parameters $(({n_2}-1)/2,2/({n_2}-1))$ and the bounded fourth moment $(2/({n_2}-1))^4\Gamma(2/({n_2}-1)-4)/\Gamma(2/({n_2}-1))$.

Let $(\log {n_2})^{-m/2}\tilde{R}:=\sup_{a\le x\le b}\abs{\hat{g}_{-1}(x)-g(x)}$. 
Note that, since $\tilde{R}=O_\mathrm{p}(1)$ by Lemma \ref{lem:fan1993}, for any $\epsilon_1>0$, there exists $\Bar{c}>0$ such that we have $\Pr\rbr{R_k>\Bar{c}}\le\epsilon_1$.
Also, note that censoring does not affect this fact since $\hat{g}(\cdot)$ is given independent of $\bX$.
Hence, we obtain, for $\Bar{c}$ and any $t_n>0$ satisfying $t_n=o(\sqrt{n_2})$,
\begin{align}
    &\Pr\rbr{\left.\frac{1}{\sqrt{n_2}}\max_{2\le k\le {n_2}}\abs{\tilde{R}_k\sum_{j=1}^pT_jX_{1j}X_{kj}}>t_n\ \right|\ |\tilde{R}|\le\Bar{c}}\\
    &\le\Pr\rbr{\frac{\Bar{c}}{\sqrt{n_2}}\max_{2\le k\le {n_2}}\abs{\sum_{j=1}^pT_jX_{1j}X_{kj}}>t_n}
    \\
    &\le\Pr\rbr{\left.\frac{\Bar{c}}{\sqrt{{n_2}}}\max_{2\le k\le {n_2}}\abs{\sum_{j=1}^pX_{1j}X_{kj}T_j}>t_n\ \right|\ \max_{1\le j\le p}|T_j|\le u}+\Pr\rbr{\max_{1\le j\le p}|T_j|> u}\\
    &\le2{n_2}\exp\rbr{-\frac{ct_n^2}{\Bar{c}^2K^2}}+\Pr\rbr{\max_{1\le j\le p}|T_j|> u}.
    \label{ineq:R_tilde}
\end{align}
with some $c>0$ depending on $u$, where the last inequality follows from the union bound and Bernstein's inequality. Here, $K$ is the sub-exponential norm of $uX_{11}X_{21}$. 
Since we have $T_j\le c_g({n_2}^{-1}\sum_{l=2}^{n_2}X_{lj}^2)$, it holds that
\begin{align}
    \Pr\rbr{\max_{1\le j\le p}|T_j|> u}
    \le\Pr\rbr{\max_{1\le j\le p}{\frac{1}{{n_2}}\sum_{l=2}^{n_2}X_{lj}^2-1}>u_*}
    \le p\exp\rbr{-c\rbr{\frac{u_*^2}{K^2}\wedge\frac{u_*}{K}}{n_2}},\label{ineq:Tj}
\end{align}
where $u_*=c_g/u-1$.
Using the bounds, Azuma-Hoeffding's inequality yields, for any $x,u_n>0$ and $t_n>0$ satisfying $t_n=o(\sqrt{{n_2}})$,
\begin{align}
    &\Pr\rbr{\frac{1}{(\log {n_2})^{m/2}}\max_{1\le i\le {n_2}}\abs{\frac{1}{{n_2}}\sum_{k\neq i}^{n_2}\tilde{R}_k\sum_{j=1}^{n_2}X_{ij}X_{kj}}>x}\\
    &\le\Pr\rbr{\left.\frac{1}{(\log {n_2})^{m/2}}\max_{1\le i\le {n_2}}\abs{\frac{1}{{n_2}}\sum_{k\neq i}^{n_2}\tilde{R}_k\sum_{j=1}^{n_2}X_{ij}X_{kj}}>x\ \right|\ |{\tilde{R}}|\le\Bar{c}}+\epsilon_1\\
    &\le {n_2}\Pr\rbr{\left.\frac{1}{(\log {n_2})^{m/2}}\abs{\frac{1}{{n_2}}\sum_{k=2}^{n_2}\tilde{R}_k\sum_{j=1}^{n_2}X_{1j}X_{kj}}>x\ \right|\ |{\tilde{R}}|\le\Bar{c}}+\epsilon_1\\
    &\le {n_2}\Pr\rbr{\left.\frac{1}{(\log {n_2})^{m/2}}\abs{\frac{1}{{n_2}}\sum_{k=2}^{n_2}\tilde{R}_k\sum_{j=1}^{n_2}X_{1j}X_{kj}}>x\ 
    \right|\ \frac{1}{n}\max_{2\le k\le {n_2}}\abs{\tilde{R}_k\sum_{j=1}^pX_{1j}X_{kj}}\le\frac{t_n}{\sqrt{{n_2}}}, |\tilde{R}|\le\Bar{c}}\\
    &\quad+{n_2}\Pr\rbr{\left.\frac{1}{{n_2}}\max_{2\le k\le {n_2}}\abs{\tilde{R}_k\sum_{j=1}^pX_{1j}X_{kj}}>\frac{t_n}{\sqrt{{n_2}}}\ \right|\ |\tilde{R}|\le\Bar{c}}+\epsilon_1\\
    &\le2{n_2}\exp\rbr{-\frac{x^2(\log {n_2})^{m}}{2t_n^2}}
    +2n_2^2\exp\rbr{-\frac{ct_n^2}{\Bar{c}^2K^2}}+n_2^2\exp\rbr{-c\rbr{\frac{u_*^2}{K^2}\wedge\frac{u_*}{K}}{n_2}}
    +\epsilon_1,
\end{align}
where the last inequality follows from \eqref{ineq:R_tilde} and \eqref{ineq:Tj}.
Thus, one can choose, for instance, $m=3$, $t_n=(\log {n_2})^{3/5}$, and $\Bar{c}=\log\log {n_2}$ so that we have $\frac{1}{(\log {n_2})^{m/2}}\max_{1\le i\le {n_2}}|{\frac{1}{{n_2}}\sum_{k\neq i}^{n_2}\tilde{R}_k\sum_{j=1}^{n_2}X_{ij}X_{kj}}|=o_\mathrm{p}(1)$ and $\epsilon_1\to0$.
\end{proof}

\begin{proof}[Proof of Theorem \ref{thm:adj-equiv}]
In this proof, we omit the superscript $(2)$ on $\bX^{(2)}$ and $\by^{(2)}$ for simplicity of the notation.
We firstly rewrite the inferential parameters defined in Section \ref{sec:link_estimation_inferential} as
\begin{align}
    \hat{\mu}_{0c}^2(g)=\frac{\|\iota(\bX\hat\bbeta(g))\|^2}{n_2}-\kappa_2(1-\kappa_2)\hat\sigma^2(g),\quad
    \hat\sigma^2_{0c}(g)=\frac{n_2^{-1}\|\by-g(\iota(\bX\hat\bbeta(g))\|^2}{\rbr{n_2^{-1}\trace(\bV(g))}^2},
\end{align}
where $\bV_c(g)=\bD_c(g)-\bD_c(g)\bX(\bX^\top\bD_c(g)\bX)^{-1}\bX^\top\bD_c(g)$.
Since we have 
\begin{align}
    &\abs{\hat\sigma_{0c}^2(\hat{g})-\hat\sigma_{0c}^2(g)}\\
    &\le\frac{1}{\rbr{n_2^{-1}\trace(\bV_c(\hat{g}))}^2\rbr{n_2^{-1}\trace(\bV_c(g))}^2}
    \left\{\frac{\|\by-g(\iota(\bX\hat\bbeta(g))\|^2}{n_2}\abs{\rbr{\frac{\trace(\bV_c(\hat{g}))}{n_2}}^2-\rbr{\frac{\trace(\bV_c(g))}{n_2}}^2}\right.\\
    &\quad\left.+\rbr{\frac{\trace(\bV_c(g))}{n_2}}^2\abs{\frac{\|\by-g(\iota(\bX\hat\bbeta(g))\|^2}{n_2}-\frac{\|\by-g(\bX\hat\bbeta(\hat{g}))\|^2}{n_2}}\right\},
\end{align}
It is sufficient to show the following properties:
\begin{gather}
    n_2^{-1}\abs{{\|\iota(\bX\hat\bbeta(\hat{g}))\|^2}-{\|\iota(\bX\hat\bbeta({g}))\|^2}}=o_\mathrm{p}(1),\label{eq:SEg-1}\\
    n_2^{-1}\abs{{\|\by-g(\iota(\bX\hat\bbeta(\hat{g})))\|^2}-{\|\by-g(\iota(\bX\hat\bbeta({g})))\|^2}}=o_\mathrm{p}(1),\label{eq:SEg-2}\\
    n_2^{-1}\abs{{\trace(\bV_c(\hat{g}))}-{\trace(\bV_c(g))}}=o_\mathrm{p}(1).\label{eq:SEg-3}
\end{gather}

For \eqref{eq:SEg-1}, immediately we have
\begin{align}
    &n_2^{-1}\abs{{\|\iota(\bX\hat\bbeta(\hat{g}))\|^2}-{\|\iota(\bX\hat\bbeta({g}))\|^2}}\\
    &=\abs{n_2^{-1}\sum_{i=1}^{n_2}\rbr{\iota(\bX_i^\top\hat\bbeta(\hat{g}))-\iota(\bX_i^\top\hat\bbeta(g))}\rbr{\iota(\bX_i^\top\hat\bbeta(\hat{g}))+\iota(\bX_i^\top\hat\bbeta(g))}}.
\end{align}
Since $\max_{i=1,\ldots,{n_2}}|{\iota(\bX_i^\top\hat\bbeta(g))-\iota(\bX_i^\top\hat\bbeta(\hat{g}))}|\pconv0$ as ${n_2}\to\infty$ by Lemma \ref{lem:lip-xb}, this term converges in probability to zero.

Next, for \eqref{eq:SEg-2}, since we have
\begin{align}
    &n_2^{-1}\abs{{\|\by-g(\iota(\bX\hat\bbeta(\hat{g})))\|^2}-{\|\by-g(\iota(\bX\hat\bbeta({g})))\|^2}}\\
    &=\abs{n_2^{-1}\sum_{i=1}^{n_2}\rbr{\hat{g}(\iota(\bX_i^\top\bbeta(\hat{g})))-{g}(\iota(\bX_i^\top\bbeta(g)))}\rbr{2y_i-\hat{g}(\iota(\bX_i^\top\bbeta(\hat{g})))-{g}(\iota(\bX_i^\top\bbeta(g)))}},
\end{align}
we should bound $\hat{g}(\iota(\bX_i^\top\bbeta(\hat{g})))-{g}(\iota(\bX_i^\top\bbeta(g)))$. Indeed, using the triangle inequality reveals
\begin{align}
    &\abs{\hat{g}(\iota(\bX_i^\top\bbeta(\hat{g})))-{g}(\iota(\bX_i^\top\bbeta(g)))}\\
    &\le\abs{{g}(\iota(\bX_i^\top\bbeta(\hat{g})))-{g}(\iota(\bX_i^\top\bbeta(g)))} 
    +\abs{\hat{g}(\iota(\bX_i^\top\bbeta(\hat{g})))-{g}(\iota(\bX_i^\top\bbeta(\hat{g})))}.
\end{align}
The first term on the right-hand side is $o_\mathrm{p}(1)$ by the Lipschitz continuity of $g(\cdot)$ and Lemma \ref{lem:lip-xb}. Also, the second term is upper bounded by $\sup_x|\hat{g}(x)-g(x)|$, and is $o_\mathrm{p}(1)$ by Lemma \ref{lem:fan1993}.

To achieve \eqref{eq:SEg-3}, we first have
\begin{align}
    &n_2^{-1}\abs{{\trace(\bV_c(\hat{g}))}-{\trace(\bV_c(g))}}\\
    &\le n_2^{-1}\abs{\trace\rbr{\bD_c(\hat{g})-\bD_c(g)}}\\
    &\quad+n_2^{-1}\abs{\trace\rbr{\bD_c(\hat{g})\bX(\bX^\top\bD_c(\hat{g})\bX)^{-1}\bX^\top\bD_c(\hat{g})-\bD_c(g)\bX(\bX^\top\bD_c(g)\bX)^{-1}\bX^\top\bD_c(g)}}.
\end{align}
For the first term, we have 
\begin{align}
    &n_2^{-1}\abs{\trace\rbr{\bD_c(\hat{g})-\bD_c(g)}}\\
    &\le n_2^{-1}\sum_{i=1}^{n_2}\abs{\hat{g}'(\iota(\bX_i^\top\hat\bbeta(\hat{g})))-g(\iota(\bX_i^\top\hat\bbeta(g)))}\\
    &\le\sup_{a\le x\le b}|\hat{g}'(x)-g'(x)|+Bn_2^{-1}\sum_{i=1}^{n_2}\abs{\iota(\bX_i^\top\hat\bbeta(\hat{g}))-\iota(\bX_i^\top\hat\bbeta({g}))}
    =o_\mathrm{p}(1),
\end{align}
by Lemma \ref{lem:deriv-unif} and Lemma \ref{lem:lip-xb}. 
For the second term, the triangle inequality yields
\begin{align}
    &n_2^{-1}\abs{\trace\rbr{\bD_c(\hat{g})\bX(\bX^\top\bD_c(\hat{g})\bX)^{-1}\bX^\top\bD_c(\hat{g})-\bD_c(g)\bX(\bX^\top\bD_c(g)\bX)^{-1}\bX^\top\bD_c(g)}}\\
    &\le n_2^{-1}\abs{\trace\rbr{\cbr{\bD_c({g})-\bD_c(\hat{g})}\bX(\bX^\top\bD_c(g)\bX)^{-1}\bX^\top\bD_c(g)}}\label{eq:SEg2-1}\\
    &\quad+n_2^{-1}\abs{\trace\rbr{\bD_c(\hat{g})\bX\cbr{(\bX^\top\bD_c(g)\bX)^{-1}-(\bX^\top\bD_c(\hat{g})\bX)^{-1}}\bX^\top\bD_c(g)}}\label{eq:SEg2-2}\\
    &\quad+n_2^{-1}\abs{\trace\rbr{\bD_c(\hat{g})\bX(\bX^\top\bD_c(\hat{g})\bX)^{-1}\bX^\top\cbr{\bD_c(g)-\bD_c(\hat{g})}}}\label{eq:SEg2-3}.
\end{align}
Using the Cauchy-Schwartz inequality, \eqref{eq:SEg2-1} is bounded by
\begin{align}
n_2^{-1}\norm{\bD_c(g)-\bD_c(\hat{g})}_F\norm{\bX(\bX^\top\bD_c(g)\bX)^{-1}\bX^\top\bD_c(g)}_F.    
\end{align}
Here, we have 
\begin{align}
    &n_2^{-1/2}\norm{\bD_c(g)-\bD_c(\hat{g})}_F\\
    &\le\rbr{\frac{1}{{n_2}}\sum_{i=1}^{n_2}\cbr{g'(\iota(\bX_i^\top\hat\bbeta(g)))-\hat{g}'(\bX_i^\top \hat{\bbeta}(\hat{g}))}^2}^{1/2}\\
    &\le\rbr{\frac{1}{{n_2}}\sum_{i=1}^{n_2}\sbr{2\cbr{g'(\iota(\bX_i^\top\hat\bbeta(g)))-{g}'(\bX_i^\top \hat{\bbeta}(\hat{g}))}^2
    +2\cbr{{g}'(\iota(\bX_i^\top\hat\bbeta(\hat{g})))-\hat{g}'(\bX_i^\top \hat{\bbeta}(\hat{g}))}^2}}^{1/2}\\
    &\le\rbr{\frac{2}{{n_2}}\sum_{i=1}^{n_2}\cbr{g'(\iota(\bX_i^\top\hat\bbeta(g)))-{g}'(\bX_i^\top \hat{\bbeta}(\hat{g}))}^2}^{1/2}
    +\rbr{\frac{2}{{n_2}}\sum_{i=1}^{n_2}\cbr{{g}'(\iota(\bX_i^\top\hat\bbeta(\hat{g})))-\hat{g}'(\bX_i^\top \hat{\bbeta}(\hat{g}))}^2}^{1/2}\\
    &\le  2B\max_{i=1,\ldots,{n_2}}\abs{\iota(\bX_i^\top\hat\bbeta(g))-\iota(\bX_i^\top\hat\bbeta(\hat{g}))}
    +2\sup_{a\le x\le b}|\hat{g}'(x)-g'(x)|
    =o_\mathrm{p}(1).
\end{align}
by Lemma \ref{lem:deriv-unif} and Lemma \ref{lem:lip-xb}. Also, we have
\begin{align}
    &n_2^{-1/2}\norm{\bX(\bX^\top\bD_c(g)\bX)^{-1}\bX^\top\bD_c(g)}_F\\
    &=n_2^{-1/2}\norm{\bD_c(g)^{-1/2}\bD_c(g)^{1/2}\bX(\bX^\top\bD_c(g)\bX)^{-1}\bX^\top\bD_c(g)^{1/2}\bD_c(g)^{1/2}}_F\\
    &\le n_2^{-1/2}\norm{\bD_c(g)^{-1/2}}_{\rm op}\norm{\bD_c(g)^{1/2}}_{\rm op}\norm{\bD_c(g)^{1/2}\bX(\bX^\top\bD_c(g)\bX)^{-1}\bX^\top\bD_c(g)^{1/2}}_F\\
    &=n_2^{-1/2}\norm{\bD_c(g)^{-1/2}}_{\rm op}\norm{\bD_c(g)^{1/2}}_{\rm op}\sqrt{\trace(\bI_{n_2})}\\
    &=\norm{\bD_c(g)^{-1/2}}_{\rm op}\norm{\bD_c(g)^{1/2}}_{\rm op}.
\end{align}
Since $\norm{\bD_c(g)^{1/2}}_{\rm op}\leq \sup_x g'(x)^{1/2}$ and $\norm{\bD_c(g)^{-1/2}}_{\rm op}\leq (\inf_x g'(x))^{-1/2}$ are constants by an assumption of $g(\cdot)$,
we conclude that \eqref{eq:SEg2-1} is $o_\mathrm{p}(1)$. \eqref{eq:SEg2-3} is also shown to be $o_\mathrm{p}(1)$ in a similar manner. Since $\bA^{-1}-\bB^{-1}=-\bA^{-1}(\bA-\bB)\bB^{-1}$ for two invertible matrices $\bA$ and $\bB$, \eqref{eq:SEg2-2} can be rewritten as 
\begin{align}
    n_2^{-1}\abs{\trace\rbr{\bD_c(\hat{g})\bX(\bX^\top\bD_c(\hat{g})\bX)^{-1}\bX^\top\cbr{\bD_c(g)-\bD_c(\hat{g})}\bX(\bX^\top\bD_c(g)\bX)^{-1}\bX^\top\bD_c(g)}},
\end{align}
and a similar technique used above provides the upper bound,
\begin{align}
    &n_2^{-1}\norm{\bD_c(\hat{g})}_{\rm op}^{1/2}\norm{\bD_c(\hat{g})^{-1}}_{\rm op}^{1/2}\norm{\bD_c({g})}_{\rm op}^{1/2}\norm{\bD_c({g})^{-1}}_{\rm op}^{1/2}
    \norm{\bD_c(g)-\bD_c(\hat{g})}_{\rm op}\\
    &\quad\times\norm{\bD_c(\hat{g})^{1/2}\bX(\bX^\top\bD_c(\hat{g})\bX)^{-1}\bX^\top\bD_c(\hat{g})^{1/2}}_F
    \norm{\bD_c({g})^{1/2}\bX(\bX^\top\bD_c({g})\bX)^{-1}\bX^\top\bD_c({g})^{1/2}}_F\\
    &=\norm{\bD_c(\hat{g})}_{\rm op}^{1/2}\norm{\bD_c(\hat{g})^{-1}}_{\rm op}^{1/2}\norm{\bD_c({g})}_{\rm op}^{1/2}\norm{\bD_c({g})^{-1}}_{\rm op}^{1/2}
    \norm{\bD_c(g)-\bD_c(\hat{g})}_{\rm op}.
\end{align}
Here, $\norm{\bD_c({g})}_{\rm op}^{1/2}\norm{\bD_c({g})^{-1}}_{\rm op}^{1/2}$ is a constant by an assumption, and also $\norm{\bD_c(\hat{g})}_{\rm op}^{1/2}\norm{\bD_c(\hat{g})^{-1}}_{\rm op}^{1/2}$ is asymptotically bounded by the uniform consistency of $\hat{g}'$ for $g'$ by Lemma \ref{lem:deriv-unif}. Finally, we have
\begin{align}
    &\norm{\bD_c(g)-\bD_c(\hat{g})}_{\rm op}\\
    &=\max_{i=1,\ldots,{n_2}}\abs{g'(\iota(\bX_i^\top\hat\bbeta(g)))-\hat{g}'(\iota(\bX_i^\top\hat\bbeta(\hat{g})))}\\
    &\le\max_{i=1,\ldots,{n_2}}\abs{g'(\iota(\bX_i^\top\hat\bbeta(g)))-g'(\iota(\bX_i^\top\hat\bbeta(\hat{g})))}
    +\max_{i=1,\ldots,{n_2}}\abs{g'(\iota(\bX_i^\top\hat\bbeta(\hat{g})))-\hat{g}'(\iota(\bX_i^\top\hat\bbeta(\hat{g})))}\\
    &\le B\max_{i=1,\ldots,{n_2}}\abs{\iota(\bX_i^\top\hat\bbeta(g))-\iota(\bX_i^\top\hat\bbeta(\hat{g}))}
    +\sup_{a\le x\le b}|g'(x)-\hat{g}'(x)|=o_\mathrm{p}(1),
\end{align}
by Lemma \ref{lem:deriv-unif} and Lemma \ref{lem:lip-xb}. Thus, \eqref{eq:SEg2-2} is $o_\mathrm{p}(1)$. Combining these results concludes the proof.
\end{proof}

\subsection{Proof of Proposition \ref{prop:efficiency}}
\begin{proof}[Proof of Proposition \ref{prop:efficiency}]
Let $\hat\bbeta:=\hat\bbeta(\hat{g})$ for simplicity of the notation.
Recall that when $J(\cdot)\equiv\bzero$,
\begin{align}
    \tilde\mu_\mathrm{LS}^2&=n_1^{-1}\|{\bX\Tilde{\bbeta}_\mathrm{LS}\|}^2-(1-\kappa_1)\tilde\sigma_\mathrm{LS}^2,\quad
    \tilde\sigma_\mathrm{LS}^2=\frac{\kappa_1}{n_1(1-\kappa_1)^2}\|{\by-\bX\Tilde{\bbeta}_\mathrm{LS}\|}^2,\\
    \hat{\mu}^2(\hat{g})&=n_2^{-1}\|\bX\hat\bbeta(\hat{g})\|^2-(1-\kappa_2)\hat{\sigma}^2(\hat{g}),\quad
    \hat{\sigma}^2(\hat{g})=\frac{\kappa_2}{n_2\hat{v}_\lambda^2}{\|\by-\hat{g}(\bX\hat\bbeta(\hat{g}))\|^2}.
\end{align}
Since we have
\begin{align}
    \frac{\tilde\mu_\mathrm{LS}^2}{\tilde\sigma_\mathrm{LS}^2}
    =\frac{\|\bX\tilde\bbeta_\mathrm{LS}\|^2}{\frac{\kappa_1}{(1-\kappa_1)^2}\|\by-\bX\tilde\bbeta_\mathrm{LS}\|^2}-(1-\kappa_1),\quad
    \frac{\hat\mu^2(\hat{g})}{\hat\sigma^2(\hat{g})}
    =\frac{\|\bX\hat\bbeta(\hat{g})\|^2}{\frac{\kappa_2}{\hat{v}_\lambda^2}\|\by-\hat{g}(\bX\hat\bbeta(\hat{g}))\|^2}-(1-\kappa_2),
\end{align}
$\hat\sigma^2(\hat{g})/\hat\mu^2(\hat{g})<\tilde\sigma_\mathrm{LS}^2/\tilde\mu_\mathrm{LS}^2$ is equivalent to
\begin{align}
    {\frac{\|\bX\hat\bbeta(\hat{
g})\|}{\|\bX\tilde\bbeta_\mathrm{LS}\|}
        \cdot\frac{\abs{\hat{v}_\lambda}}{1-\kappa_1}
        \cdot\frac{\|\by-\bX\tilde\bbeta_\mathrm{LS}\|}{\|\by-\hat{g}(\bX\hat\bbeta(\hat{g}))\|}}>1.
\end{align}
Next, when $J(\bb)=\lambda\|\bb\|^2$, recall that
\begin{align}
    \tilde\mu^2&=\|\tilde\bbeta\|^2-\tilde\sigma^2, \quad
    \tilde\sigma^2=\kappa_1n_1^{-1}\|\by-\bX\tilde\bbeta\|^2(\Tilde{v}^2+\lambda_1)^{-2}\\
    \hat\mu^2(\hat{g})&=\|\hat\bbeta(\hat{g})\|^2-\hat\sigma^2(\hat{g}), \quad
    \hat\sigma^2(\hat{g})=\kappa_2n_1^{-1}\|\by-\bX\hat\bbeta(\hat{g})\|^2(\hat{v}_\lambda^2+\lambda)^{-2}.
\end{align}
Thus, in a similar way as when $J(\cdot)\equiv\bzero$, we conclude the proof.
\end{proof}

\bibliographystyle{apecon}
\bibliography{main}

\end{document}